\newcommand{\R}{\mathbb R}
\newcommand{\Z}{\mathbb Z}
\newcommand{\C}{\mathbb C}
\newcommand{\PP}{\mathbb P}
\newcommand{\E}{\mathbb E}
\newcommand{\ip}[2]{\left\langle#1,#2\right\rangle}
\theoremstyle{plain}
\newtheorem{theorem}{Theorem}
\newtheorem{lemma}[theorem]{Lemma}
\newtheorem{corollary}[theorem]{Corollary}
\theoremstyle{definition}
\newtheorem{definition}[theorem]{Definition}
\newtheorem{example}[theorem]{Example}
\newtheorem{remark}[theorem]{Remark}
\numberwithin{theorem}{section}
\newcommand{\details}[1]{}
\newcommand{\intuition}[1]{}
\title[Time-Inhomogeneous Random Walks]{Time-Inhomogeneous Random Walks on Finite Groups and Cokernels of Random Integer Block Matrices}
\author{Elia Gorokhovsky}
\address{Harvard University, Cambridge, USA}
\date{\today}
\begin{document}

\begin{abstract}
    We study time-inhomogeneous random walks on finite groups in the case where each random walk step need not be supported on a generating set of the group. When the supports of the random walk steps satisfy a natural condition involving normal subgroups of quotients of the group, we show that the random walk converges to the uniform distribution on the group and give bounds for the convergence rate using spectral properties of the random walk steps. As an application, we use the moment method of Wood to prove a universality theorem for cokernels of random integer matrices allowing some dependence between entries.
\end{abstract}

\maketitle

\section{Introduction}

The work in this paper is motivated by a question in the theory of integer random matrices but is of independent interest to the study of random walks on groups.

A random walk on a group $G$ is a Markov chain on $G$ whose state after the $n$th step is a product $X_1X_2\dots X_n$ for independent random elements $X_i \in G$. Random walks on finite groups are well-studied in the time-homogeneous, ergodic regime, where the $X_i$ are all drawn from a fixed distribution supported on a generating set of $G$. Such random walks are known to converge to the uniform distribution $\pi$ on $G$ exponentially quickly. Namely, if we denote by $\nu_n$ the distribution of $X_1X_2\dots X_n$, then
\[
||\nu_n - \pi||_{L^2} \leq \sigma^n,
\]
where $\sigma$ is the second-largest singular value of the Markov operator of the random walk and $||\nu_n - \pi||_{L^2} \coloneqq \sqrt{\sum_{g \in G}(\nu_n(g) - |G|^{-1})^2}$ denotes the $L^2$ distance between the measures $\nu_n$ and $\pi$ viewed as functions $G \to \R$. See \cite{saloff-coste2004book} for an excellent review of these kinds of walks. 

The above inequality comes from looking at norms of convolution operators on the space of signed measures on $G$. If $X$ and $Y$ are random elements of $G$ distributed according to $\mu$ and $\nu$ respectively, then $XY$ is distributed according to the convolution \[
(\mu * \nu) (g) = \sum_{h \in G} \mu(h)\nu(h^{-1}g).
\] 
In particular, if the $X_i$ are distributed according to $\mu$, then $\nu_n$ is the $n$-fold convolution $\mu^{*n} = \underbrace{\mu * \dots * \mu}_{n\text{ times}}$. Since $\pi * \mu = \pi$, the difference $\nu_n - \pi$ can then be expressed as $(\mu - \pi) * \mu^{*{n-1}}$. The $L^2$ norm of this function can be bounded in terms of the operator norm of the convolution operator $*\mu$ restricted to a suitable subspace, which is related to its second-largest singular value $\sigma$.

Some of the assumptions can be relaxed; for instance, Saloff-Coste and Z\'u\~niga \cite{saloffcoste2007inhomogeneous} studied convergence of time-inhomogeneous Markov chains, including random walks on finite groups, in the case where each step of the random walk is irreducible (in particular, supported on a generating set of $G$). In that case, if we denote by $\sigma_i$ the second-largest singular value of the $i$th step,
 \[
||\nu_n - \pi||_{L^2} \leq \prod_{i=1}^n\sigma_i.
\]
The condition that each step of the random walk is supported on a generating set is crucial because if the subgroup generated by the supports of the steps is a proper subgroup of $G$, the random walk will surely stay in that subgroup. In that case, the second-largest singular value of the corresponding step is 1. Nevertheless, one may expect that if the supports of all steps taken together generate $G$, the random walk might still equilibrate to the uniform distribution on $G$.

A consequence of our first main result is the following theorem, which relaxes this ``generating'' assumption by extending part of \cite[Theorem 3.5]{saloffcoste2007inhomogeneous} to some time-inhomogeneous random walks where the probability measures driving each step need not be irreducible:

\begin{theorem}\label{thm:intro-random-walks}
Let $G$ be a finite group, and let $\mu_1, \mu_2, \dots, \mu_n$ be probability measures on $G$. For each subgroup $H$ of $G$, let $I_H = \{i \mid H = \langle \operatorname{supp} \mu_i\rangle\}$. Let $\mathcal{S}$ be a finite set of normal subgroups of $G$ such that $G = \left\langle \bigcup_{H \in \mathcal{S}} H\right\rangle$. Write $\nu_n = \mu_1 * \dots * \mu_n$. Also, for each $i$, let $\sigma_i$ be the second-largest singular value of $*\mu_i$ as an operator on $L^2(\langle \operatorname{supp} \mu_i \rangle)$. Let $\pi$ be the uniform distribution on $G$. 

If $I_H$ is nonempty for each $H \in \mathcal{S}$, we have \[
||\nu_n - \pi||_{L^2} \leq \sum_{H \in \mathcal{S}} \left(\prod_{i \in I_H} \sigma_i\right).
\]
\end{theorem}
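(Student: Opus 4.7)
The plan is to decompose the deviation $\nu_n - \pi$ subgroup by subgroup using orthogonal projections associated to the normal subgroups in $\mathcal{S}$, and then estimate each piece by a coset decomposition adapted to a single $\mu_i$.

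Write $P_H \colon L^2(G) \to L^2(G)$ for the operator $f \mapsto U_H * f$, where $U_H$ is the uniform probability measure on a normal subgroup $H \lhd G$. A direct check shows that $P_H$ is the orthogonal projection onto the subspace of functions constant on cosets of $H$. Normality forces $U_H * \mu = \mu * U_H$ for every measure $\mu$ on $G$, so $P_H$ commutes with both left and right convolution, and in particular $V^{\perp}_H := \ker P_H$ is preserved by every convolution operator. Enumerating $\mathcal{S} = \{H_1, \ldots, H_m\}$, I would exploit the telescoping identity
\[
\nu_n - \pi = \sum_{k=1}^m P_{H_1}\cdots P_{H_{k-1}}(I - P_{H_k})\nu_n + \bigl(P_{H_1}\cdots P_{H_m}\nu_n - \pi\bigr).
\]
The remainder vanishes: a short count gives $U_H * U_K = U_{HK}$ for normal $H, K$, so iterating yields $U_{H_1}*\cdots*U_{H_m} = U_{H_1\cdots H_m}$. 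Since the $H_i$ are normal and generate $G$, the product $H_1\cdots H_m$ is a normal subgroup containing each of them and hence equals $G$; thus $P_{H_1}\cdots P_{H_m}\nu_n = \pi * \nu_n = \pi$. Taking $L^2$-norms and using that each $P_{H_i}$ is a contraction reduces everything to proving
\[
\|(I - P_H)\nu_n\|_{L^2} \leq \prod_{i \in I_H}\sigma_i
\]
for each $H \in \mathcal{S}$.

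To prove this, I push $(I - P_H)$ through the product using the commutativity observed above:
\[
(I - P_H)\nu_n = (\delta_e - U_H) * \mu_1 * \cdots * \mu_n,
\]
where $\delta_e - U_H \in V^{\perp}_H$ has $L^2$-norm $\sqrt{1 - 1/|H|} \leq 1$. Right convolution by each $\mu_i$, which I denote $R_{\mu_i}$, preserves $V^{\perp}_H$ and is a contraction on $L^2(G)$; for $i \in I_H$ I claim the sharper bound that the restriction of $R_{\mu_i}$ to $V^{\perp}_H$ has operator norm at most $\sigma_i$. Multiplying these estimates along the sequence then gives the target inequality.

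The main obstacle is precisely this last operator-norm bound. For $\mu_i$ supported on $H$, right convolution by $\mu_i$ preserves each coset $C$ of $H$, yielding an $R_{\mu_i}$-invariant orthogonal decomposition $L^2(G) = \bigoplus_{C \in G/H} L^2(C)$. Choosing a representative for each $C$ isometrically identifies $L^2(C)$ with $L^2(H)$ so that $R_{\mu_i}$ on $L^2(C)$ is conjugate to right convolution by $\mu_i$ on $L^2(H)$; the uniform measure on $C$ is the top singular vector with singular value $1$, and its orthogonal complement---the mean-zero functions on $C$---has operator norm $\sigma_i$ by definition. Since the mean-zero subspaces across all cosets assemble exactly to $V^{\perp}_H$, the restricted operator norm is at most $\sigma_i$, modulo a brief verification that the singular values of left and right convolution by a probability measure on a finite group coincide (they are conjugate by the unitary $f(g)\mapsto f(g^{-1})$), so that the $\sigma_i$ as stated in the theorem also controls right convolution.
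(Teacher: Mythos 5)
Your proof is correct, but the route is genuinely different from the paper's. The paper first proves the stronger Theorem~\ref{thm:strong-random-walks}, which allows a subnormal sequence $G = G_0 \twoheadrightarrow G_1 \twoheadrightarrow \dots \twoheadrightarrow \{e\}$, by induction along that sequence. At each level it invokes a Pythagorean identity (Lemma~\ref{lem:rw-l2-distance}) splitting the $L^2$ distance into a ``tangential'' part along $\mathcal{M}_{H_{r+1}}$ and a ``normal'' part $d_{L^2}(\nu_n, \mathcal{M}_{H_{r+1}})$, and then bounds the latter by a separate lemma (Lemma~\ref{lem:subspace-contraction}) proved by a second induction over the walk steps with an explicit coset-by-coset estimate. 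Theorem~\ref{thm:intro-random-walks} then falls out as Corollary~\ref{cor:random-walks}. You instead prove the intro theorem directly: you telescope $\nu_n - \pi$ over the list $H_1,\dots,H_m$ via the projections $P_{H_k} = U_{H_k}*(\,\cdot\,)$, kill the remainder using $U_{H_1}*\dots*U_{H_m}=U_{H_1\cdots H_m}=\pi$, and replace both inductions by a single operator-norm estimate: $(I-P_H)\ker$ is preserved by all $R_{\mu_i}$ by associativity, and on that subspace $R_{\mu_i}$ has norm $\le\sigma_i$ when $\langle\operatorname{supp}\mu_i\rangle=H$ via the orthogonal decomposition $L^2(G)=\bigoplus_{gH}L^2(gH)$. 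All the steps check out; the one redundant remark is the aside on left versus right convolution having equal singular values---the paper only ever uses right convolution, as do you, so it is never needed. What the two approaches buy: your version is shorter and gives a slightly sharper constant $\sqrt{1-1/|H|}$ in place of the paper's $\sqrt{(|G|-1)/|G|}$ per term, but it relies on every $H\in\mathcal{S}$ being normal in $G$ itself so that the products $H_1\cdots H_k$ are subgroups and so that the image of $P_H$ is genuinely $\mathcal{M}_H$. The paper's heavier inductive machinery is what lets it weaken normality to normality-in-successive-quotients (Theorem~\ref{thm:strong-random-walks}, used for the dihedral Example 3.2); for the cokernel application, where the groups in play are abelian, only the intro theorem is used, so your argument would suffice there.
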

We prove a more general version of this result in Theorem~\ref{thm:strong-random-walks}. 

In particular, if a time-inhomogeneous random walk on a finite group has steps supported on enough subgroups, then it converges to the uniform distribution on the group with an exponential rate controlled by subgroups that appear infrequently or mix very slowly. Adding more probability measures to the convolution $\nu_n$ may not improve the convergence rate, but it never makes the bound worse because convolution with a probability measure is non-expansive in the $L^2$ norm. A nice consequence of this is that $\mathcal{S}$ need not be an exhaustive list of every normal subgroup for which $I_H$ is nonempty. 

The main difference between this result and \cite[Theorem 3.5]{saloffcoste2007inhomogeneous} is that \cite{saloffcoste2007inhomogeneous} relaxes the time-homogeneity assumption for random walks but not the assumption that each step is supported on a generating set for the group. The new condition that the supports of the steps jointly generate $G$ is substantially weaker than the assumption that the support of each step generates $G$. 

The conditions of Theorem~\ref{thm:intro-random-walks} can be weakened so that not all the subgroups $H_i$ need to be normal (see Theorem~\ref{thm:strong-random-walks}), but see Example~\ref{ex:normality-necessary} for why some hypothesis on the subgroups is necessary. In this paper, we apply Theorem~\ref{thm:intro-random-walks} in the case where the ambient group $G$ is abelian, so that the normality condition on the subgroups $H$ becomes vacuous. However, we emphasize that the theorem applies also to nonabelian groups and may have some interesting implications to random walks on nonabelian groups with many normal subgroups (e.g., the quaternion group $Q_8$).

Our main interest in developing this theorem is an application to limiting distributions of cokernels of random matrices. Wood \cite[Theorem 2.9]{wood2019matrices} and Nguyen and Wood \cite[Theorem 1.1]{nguyenRandomIntegralMatrices2022} showed that cokernels of integer-valued random matrices approach a universal limiting distribution in the following sense. Let $(M_n)_{n=1}^\infty$ be a sequence with each $M_n$ a random $n\times (n + u)$ integer matrix with independent entries ($u \geq 0$). Wood showed that, under very weak conditions on the distributions of the entries of the $M_n$, the distribution of the isomorphism class of the random group $\operatorname{coker}(M_n) \coloneqq \Z^n/M_n(\Z^{n+u})$ converges weakly (i.e., at any finite collection of primes) as $n \to \infty$ to the distribution $\lambda_u$ on isomorphism classes of abelian groups defined as follows: if $Y \sim \lambda_u$ and $B$ is a finite abelian $p$-group, then \[
\PP[Y_p \cong B] = \frac{1}{|B|^u|\operatorname{Aut}(B)|}\prod_{k= u + 1}^\infty (1 - p^{-k})
\] 
independently for all primes $p$, where $Y_p$ is the $p$-part of $Y$ (i.e., $Y = \prod_p Y_p$, where the product ranges over all primes). If further $u \geq 1$, then $\lambda_u$ is supported on isomorphism classes of finite abelian groups, and for finite abelian $B$ we have
\[
\lambda_u(B) = \frac{1}{|B|^u |\operatorname{Aut}(B)|}\prod_{k= u + 1}^\infty \zeta(k)^{-1},
\]
where $\zeta$ denotes the Riemann zeta function. Nguyen and Wood weakened the conditions on the entries and showed strong (pointwise) convergence to $\lambda_u$. The phenomenon that the limiting distribution of $\Z^n/M_n(\Z^{n+u})$ is rather insensitive to the distributions of the entries of $M_n$ is an example of \textit{universality}. The distributions $\lambda_0$ and $\lambda_1$ are known as the Cohen-Lenstra distributions, and are conjectured to describe the distributions of class groups of imaginary and real quadratic number fields, respectively.

In her 2022 ICM talk, Wood \cite[Open Problem 3.10]{woodProbabilityTheoryRandom2023} asks if the universality class of $\lambda_u$ can be extended to cokernels of matrices with some dependent entries. There are a few specific results in this direction. Most recently, Nguyen and Wood \cite[Theorem 1.1]{nguyenRandomIntegralMatrices2022} show that the distribution $\lambda_1$ is universal for Laplacians of Erd\H{o}s-R\'enyi random directed graphs. M\'esz\'aros \cite{Meszaros2020} shows that $\lambda_0$ is universal for Laplacians of random regular directed graphs. Friedman and Washington \cite{FriedmanWashington+1989+227+239} show that the cokernels of the random matrices $I - M$, where $M$ is drawn at random from the multiplicative Haar measure on $\operatorname{GL}_{2g}(\Z_p)$, approach the $p$-part of $\lambda_0$ as $g \to \infty$. However, when there is \textit{too much} dependence in the entries of the random matrices, one gets different (but often related) limiting distributions, for example in the case of symmetric matrices (\cite{wood2017sandpile}), Laplacians of random regular undirected graphs (\cite{Meszaros2020}), products of independent random integral matrices (\cite{nguyen2024products}), and quadratic polynomials in Haar-random matrices (\cite{cheong2022polynomials}). 

There are more recent examples where the amount of dependence can be controlled quantitatively. For instance, M\'esz\'aros \cite{meszaros_phase_2024} shows that cokernels of Haar-random band matrices over $\Z_p$ converge to the $p$-part of $\lambda_0$ if and only if the width of the band grows fast enough. Kang, Lee, and Yu \cite{kang_random_2024} show that certain random matrices over $\Z_p$ with some entries fixed to zero have cokernels converging to the $p$-part of $\lambda_0$, but fixing too many entries to zero prevents convergence to $\lambda_0$.

It is natural to ask just how much (and what kind of) dependence is allowed between the entries of sequences of random matrices before their cokernels leave the universality class of $\lambda_u$.

The main application of Theorem~\ref{thm:strong-random-walks} in this paper is a Theorem~\ref{thm:intro-matrix-universality} below, which extends the result of \cite{wood2019matrices} to matrices with a rather general form of dependence in their rows and columns. We introduce a regularity condition on matrices, $(w, h, \varepsilon)$-balanced, in Definitions~\ref{def:balanced}~and~\ref{def:w-h-e-balanced}. Generally, it means that the matrix can be written as a block matrix where the blocks have height at most $h$, width at most $w$, are all independent, and each satisfy some regularity condition depending on $\varepsilon$. The key detail is that the blocks of the matrix may have dependent entries, as long as there is no dependence between blocks. (The $(w, h, \varepsilon)$-balanced condition is invariant under permutation of rows and columns, so one can also think of a $(w, h, \varepsilon)$-balanced matrix as a block matrix which is at most $h$ blocks tall, at most $w$ blocks wide, and such that the entries of each block are independent of each other, while some dependence between different blocks is allowed.) With this condition, we have:

\begin{theorem}\label{thm:intro-matrix-universality}
    Let $u \geq 0$ be an integer. Let $(w_n)_n, (h_n)_n$, $(\varepsilon_n)_n$ be sequences of real numbers such that $w_n = O(n^{\alpha_1})$, $h_n = O(n^{\alpha_2})$ and $\varepsilon_n = \Omega( n^{-\beta})$ for some $0 \leq \alpha_1, \alpha_2, \beta < 1$ satisfying \[2\alpha_1 + \alpha_2 < 1 - 2\beta.
    \]
    For each integer $n \geq 0$, let $M_n$ be an $(w_n, h_n, \varepsilon_n)$-balanced $n \times (n + u)$ random matrix with entries in $\Z$. Then the distribution of $\operatorname{coker}(M_n)$ converges weakly to $\lambda_u$ as $n \to \infty$. In other words, if $Y \sim \lambda_u$, then for every positive integer $a$ and every abelian group $H$ with exponent dividing $a$ we have \[
    \lim_{n\to\infty} \PP[\operatorname{coker}(M_n) \otimes \Z/a\Z \cong H] = \PP[Y \otimes \Z/a\Z \cong H].
    \]
\end{theorem}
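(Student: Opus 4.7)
The plan is to use Wood's surjection-moment method. For every finite abelian group $G$, I would show
$$\lim_{n \to \infty} \E\bigl[\#\operatorname{Sur}(\operatorname{coker}(M_n), G)\bigr] = |G|^{-u},$$
matching the surjection moments of $\lambda_u$; the moment-determines-distribution results of \cite{wood2019matrices, nguyenRandomIntegralMatrices2022} then give the claimed weak convergence.

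To compute this moment, expand $\E[\#\operatorname{Sur}(\operatorname{coker}(M_n), G)] = \sum_{F} \PP[F M_n = 0]$, where $F$ ranges over surjections $\Z^n \twoheadrightarrow G$, identified with generating tuples $(f_1, \ldots, f_n) \in G^n$. Fix such an $F$ and partition $M_n$ into independent blocks $B^{(k, l)}$ with row-strips $R_k$ (size at most $h_n$) and column-strips $C_l$ (width at most $w_n$). Setting $Y_{k, l} := F_{R_k} B^{(k, l)}$, regarded as an element of $G^{n+u}$ supported on coordinates in $C_l$, one has $F M_n = \sum_{k, l} Y_{k, l}$, and the $Y_{k, l}$ are independent because the blocks are. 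This exhibits $F M_n$ as the outcome of a time-inhomogeneous random walk on the finite abelian group $G^{n+u}$, with each step supported on a normal subgroup $H_k^{C_l}$, where $H_k := \langle f_i : i \in R_k \rangle \leq G$; because $F$ is surjective, the union of these subgroups generates $G^{n+u}$.

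Applying Theorem~\ref{thm:strong-random-walks} with $\mathcal{S}$ a suitable refinement of $\{H_k^{C_l}\}_{k, l}$ yields
$$\bigl|\PP[F M_n = 0] - |G|^{-(n+u)}\bigr| \leq \sum_{H \in \mathcal{S}} \prod_{(k, l) \in I_H} \sigma_{k, l},$$
where $\sigma_{k, l}$ is the second-largest singular value of convolution with $Y_{k, l}$ on $L^2(H_k^{C_l})$. The $(w_n, h_n, \varepsilon_n)$-balanced condition, via Fourier analysis on $G$, gives $\sigma_{k, l} \leq 1 - c(G) \varepsilon_n$ whenever the step is nondegenerate. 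Each column-strip contains at least $n/h_n = \Omega(n^\alpha)$ row-strips, so each inner product $\prod_k \sigma_{k, l}$ is bounded by $\exp(-c(G) n^{\alpha - \beta})$, which decays super-polynomially under the assumption $\beta < \alpha/2$. Summing over the at most $n + u$ column-strips, the error is negligible; since there are $(1 + o(1))|G|^n$ surjections $\Z^n \twoheadrightarrow G$, summing over $F$ yields the desired limit $|G|^{-u}$.

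The main obstacle is handling surjections $F$ for which $H_k$ is a proper subgroup of $G$ on many row-strips: the walk on $G^{n+u}$ equilibrates only on the smaller subgroup $\prod_l (\sum_k H_k)^{C_l}$, not all of $G^{n+u}$, so both the main term and the error bound from Theorem~\ref{thm:strong-random-walks} must be computed within a nontrivial coset structure. I would stratify the surjections by the tuple $(H_k)_k$, apply the random walk theorem within each stratum, and bound atypical strata combinatorially: each degenerate row-strip constrains $h_n$ coordinates of $F$ to lie in a proper subgroup of $G$, suppressing the count by a factor $|G|^{-\Omega(h_n)}$. Balancing this combinatorial rarity against the slower mixing on atypical strata is the technical heart of the argument, and dictates the precise growth rates $w_n = o(\log n)$, $h_n = O(n^{1 - \alpha})$, and $\varepsilon_n \geq n^{-\beta}$ with $\beta < \alpha/2$.
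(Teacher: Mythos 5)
Your high-level strategy agrees with the paper's: compute the surjection moments via the moment method of Wood, expand $\E[\#\operatorname{Sur}(\operatorname{coker}(M_n),G)]=\sum_F\PP[F M_n=0]$, treat blocks of the matrix as steps of a time-inhomogeneous random walk, invoke the random walk theorem to show near-uniformity, bound singular values via the $\varepsilon$-balanced hypothesis, and stratify the surjections by how degenerate their restrictions are. All of that matches the paper's architecture (Theorem~\ref{thm:moments}, Lemmas~\ref{lem:partition-equidistribution}--\ref{lem:full-partition-depth}).

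However, there is a genuine gap in the central estimate. You propose to view $FM_n$ as a single random walk on the group $G^{n+u}$ and use Theorem~\ref{thm:strong-random-walks} to bound $\bigl|\PP[FM_n=0]-|G|^{-(n+u)}\bigr|$. Applying that theorem (via the $L^2$-to-$L^\infty$ step) gives an \emph{additive} error at best of order $n\exp(-c\,n^{\alpha-\beta})$, since the best one can get from each column strip is an $L^2$ distance of roughly $\exp(-c\,n^{\alpha-\beta})$ and the contributions add over the at most $O(n/w_n)$ column strips. But the main term $|G|^{-(n+u)}$ is exponentially small in $n$, while your error term is only stretched-exponentially small in $n$ (as $\alpha-\beta<1$), so the error swamps the main term. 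After summing over $\approx|G|^n$ surjections you get $|G|^n\cdot n\exp(-c\,n^{\alpha-\beta})\to\infty$ rather than $|G|^{-u}$. Even in the best possible scenario, where you exploit the product structure and use $\PP[FM_n=0]=\prod_l\nu^{(l)}(0)$, converting the per-column-strip $L^2$ errors back through the crude inequality $|\nu(0)-\pi(0)|\le\|\nu-\pi\|_{L^2}$ loses a factor of about $|G|^{(n+u)/2}$ relative to what is needed.

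The paper avoids this by ordering the two steps differently. It first uses column independence to factor $\PP[f(M)=0]=\prod_l\PP[f(M_l)=0]$, and \emph{then} applies the random walk theorem inside each small group $G^{\#Q_l}$, where $\#Q_l\le w_n=o(\log n)$, so $|G^{\#Q_l}|=n^{o(1)}$. On that small group, Lemma~\ref{lem:partition-equidistribution} gives a pointwise error that is tiny \emph{relative to} $|G|^{-\#Q_l}$, i.e.\ a multiplicative factor $1+x_l$ with $|x_l|\le|G|^{w_n}N\exp(-c\,n^{\alpha-\beta-o(1)})=o(1)$. Multiplying these across the $O(n/w_n)$ column strips is controlled by Lemma~\ref{lem:err-combining}, and the sum over surjections converges. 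That reordering — factor first, then apply the walk theorem to the small group — is the essential point your write-up is missing.

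Two smaller inaccuracies. First, the singular value bound $\sigma_{k,l}\le 1-c(G)\varepsilon_n$ is not what the $\varepsilon$-balanced hypothesis gives directly; by Lemma~\ref{lem:balanced-singular-values} the correct bound is $\sigma_{k,l}\le\exp\bigl(-\varepsilon_n/(2|G|^{3r})\bigr)$ where $r\le w_n$ is the number of columns in the strip, and the factor $|G|^{3w_n}=n^{o(1)}$ has to be tracked (it is absorbed precisely because $w_n=o(\log n)$). Second, the claim that a degenerate row-strip suppresses the count of surjections by $|G|^{-\Omega(h_n)}$ is not correct as stated; the suppression factor is $(|H|/|G|)^{h_n}$ per row-strip and per choice of proper subgroup $H$, and one must also budget for the choice of which row-strips are degenerate — a binomial-coefficient factor that is far from negligible when $h_n$ is large. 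This bookkeeping is the content of Lemma~\ref{lem:count-depth}, where the stratification is by the index $D=[G:f(V_{\setminus\cup\sigma})]$ rather than by the full tuple $(H_k)_k$, which is coarser and easier to sum over.
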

Here $w_n = O(n^{\alpha_1})$ means that there is a constant $A$ independent of $n$ such that $w_n \leq An^{\alpha_1}$, and $\varepsilon_n = \Omega(n^{-\beta})$ means that there is a constant $B$ independent of $n$ such that $\varepsilon_n \geq Bn^{-\beta}$.  

The key idea of the proof of Theorem~\ref{thm:intro-matrix-universality} uses the moment method developed in \cite{wood2017sandpile} and \cite{wood2019matrices}. Understanding the cokernel of a random integer matrix reduces to finding the probability that each random column maps to zero under an arbitrary surjective group homomorphism $f\colon \Z^n \to G$ for an arbitrary abelian group $G$. To handle dependent columns, we treat several columns at a time and look at the induced surjection $(\Z^n)^m \to G^m$. We view the image of a random element of $(\Z^n)^m$ as a random walk in $G^m$ and apply Theorem~\ref{thm:strong-random-walks} to approximate the distribution of this image. Since the surjection $f$ is arbitrary, we have very little control over the distribution of the steps of this walk. In particular, they are almost never supported on all of $G^m$, which is why we need Theorem~\ref{thm:strong-random-walks} to handle random walk steps supported on proper subgroups. The $(w, h, \varepsilon)$-balanced condition allows us to bound the singular values of the associated convolution operators and get quantitative bounds on the error in terms of $w$, $h$, and $\varepsilon$.

This random walk model works for most surjections $f\colon \Z^n \to G$. However, there is a certain (controlled, by Lemma~\ref{lem:count-depth}) number of exceptionally pathological surjections $\Z^n \to G$. For these surjections, we use the $\varepsilon$-balanced condition to give a bound on how much each can affect the computation of the moments. This bound gets better the more independent blocks of columns we can find in the random matrix in question (in other words, the narrower each block is). As a result, there is an additional constraint on $w_n$ that forces us to restrict it more, explaining the asymmetry between $\alpha_1$ and $\alpha_2$ in the conditions.

The actual constraint that one should expect is that there is a tradeoff between regularity and block size \textit{for each individual block}. As a given block gets wider, one needs to impose a stronger $\varepsilon$-balancedness condition on it. On the other hand, smaller blocks may be allowed to be less regular without compromising the universality. A statement of this form follows directly from our proof, but we present the results with a uniform bound on block sizes and balancedness for simplicity.

There is a considerable body of literature pertaining to random matrices with \textit{complex} entries, with analogous universality results about distributions of eigenvalues. If $\{M_n\}$ is a sequence of $n\times n$ random complex matrices whose entries are independent, with appropriately normalized mean and variance, the empirical distribution of the eigenvalues of $M_n$ converges to the \textit{circular law}, which is the uniform distribution on the unit disc in $\C$ \cite{tao2010circle}. The universality of the circular law for spectra of a wide class of random complex block matrices was proved by Nguyen and O'Rourke in \cite{nguyenConcentrationRandomMultilinear2015}. However, we note that \cite{nguyenConcentrationRandomMultilinear2015} shows universality for block matrices with i.i.d. blocks of constant size, whereas our blocks are allowed to grow in size with the matrices.

\subsection{Notation and Terminology}

For a finite set $S$, we use $L^2(S)$ to denote the space of signed measures (equivalently, real-valued functions) on $S$, equipped with the norm $||f||_{L^2(S)}^2 = \sum_{s\in S} |f(s)|^2$. When the set $S$ is implicit, we write $||f||_{L^2}$ for $||f||_{L^2(S)}$. Any set map $f\colon S \to T$ defines a \textit{pushforward} map $f_*\colon L^2(S) \to L^2(T)$ by $f_*\mu(t) = \mu(f^{-1}(t))$. We say a signed measure $\nu$ is \textit{uniform} on $T \subset S$ if $\nu(t) = \nu(t')$ for $t, t' \in T$. For a point $f \in \R^n$ with the Euclidean metric and a linear subspace $W \subset \R^n$, we write $d_{L^2}(f, W)$ for the distance between $f$ and its orthogonal projection onto $W$. Note that this is equal to $\inf_{g \in W} |f - g|$. If $G$ is a finite group, any signed measure $\mu$ defines a linear \textit{convolution operator} (or, if $\mu$ is a probability measure, \textit{Markov operator}) $*\mu$ on $L^2(G)$ given by $\nu \mapsto \nu * \mu$. When we discuss the second-largest singular value of an operator, we are counting with multiplicity; for example, if the singular values of $M$ are $1, 1, 0$, then its second-largest singular value is $1$. 

For two finite or profinite groups $G, G'$, we write $\operatorname{Hom}(G, G')$ for the set of (continuous) group homomorphisms from $G$ to $G'$ and $\operatorname{Sur}(G, G')$ for the set of (continuous) surjective group homomorphisms from $G$ to $G'$. For a subset $S \subseteq G$, we denote by $\langle S \rangle$ the (closed) subgroup of $G$ generated by $S$. We refer to the identity element of a group as $e$.

A probability measure or distribution is a measure with total mass 1 (not signed). The uniform distribution on $G$ is usually denoted $\pi$ and is the measure on $G$ with $\pi(g) = 1/|G|$ for $g \in G$. We use $\PP[\cdot]$ for probability and $\E[\cdot]$ for expectation. We denote by $\operatorname{supp} \mu$ the support of a measure $\mu$. If a random variable $X$ has law $\mu$, we write $X \sim \mu$.

For a positive integer $n$, we write $[n]$ for the set $\{1, \dots, n\}$.

If $a_n$ and $b_n$ are sequences of positive real numbers, we write $a_n = O(b_n)$ if there is a constant $0 < K < \infty$ such that $a_n \leq Kb_n$ for all $n$, and we write $a_n = \Omega(b_n)$ if there is a constant $0 < K < \infty$ such that $a_n \geq Kb_n$ for all $n$, i.e., if $b_n = O(a_n)$. We write $a_n = o(b_n)$ if, for each constant $K < \infty$, there is a natural number $n_0$ such that $a_n \leq Kb_n$ when $n \geq n_0$.

\section{Random Walks}\label{sect:walks}

This section is devoted to proving the following stronger version of Theorem~\ref{thm:intro-random-walks} from the introduction:

\begin{theorem}\label{thm:strong-random-walks}
Let $G$ be a finite group and suppose we have a sequence of surjective homomorphisms \[
G = G_0 \xtwoheadrightarrow{Q_1} G_1 \xtwoheadrightarrow{Q_2} G_2 \xtwoheadrightarrow{Q_3} \dots \xtwoheadrightarrow{Q_{k-1}} G_{k-1} \xtwoheadrightarrow{Q_k} G_k = \{e\}.
\]
For $0 \leq j \leq k$, define $\tilde{Q}_j\colon G \twoheadrightarrow G_j$ by $\tilde{Q}_j = Q_j\circ Q_{j-1}\circ\dots\circ Q_1$ (so $\tilde{Q}_0 = \operatorname{id}_G$), and for $1 \leq j \leq k$ define $H_j \trianglelefteq G_{j-1}$ by $H_j = \ker Q_j$.

Let $\mu_1, \dots, \mu_n$ be probability measures on $G$. Let $\nu_n = \mu_1*\dots* \mu_n$. For each $j = 1, \dots, k$, let $I_j = \{i \mid \langle \operatorname{supp} (\Tilde{Q}_{j-1})_* \mu_i \rangle = H_j \}$. Let $\pi$ be the uniform distribution on $G$.
    
    For $i \in I_j$, let $\sigma_i$ be the second largest singular value of the $(\Tilde{Q}_{j-1})_* \mu_i$-random walk on $H_j$. If each $I_j$ is nonempty, we have \[
    ||\nu_n - \pi||_{L^2}^2 \leq \sum_{j=1}^k \frac{|G_{j-1}| - 1}{|G|}\left(\prod_{i \in I_j} \sigma_i^2\right) = \sum_{j=1}^k \frac{\prod_{i=j}^{k} |H_i| - 1}{|G|}\left(\prod_{i \in I_j} \sigma_i^2\right).
    \]
\end{theorem}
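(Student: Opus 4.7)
I plan to induct on $k$, the length of the chain. For the base case $k = 1$ (so $H_1 = G$ and $I_1 = \{i : \langle \operatorname{supp} \mu_i\rangle = G\}$), the claim reduces to the standard time-inhomogeneous random walk bound: since $\pi * \mu = \pi$ for any probability measure $\mu$, we have $\nu_n - \pi = (\delta_e - \pi) * \mu_1 * \cdots * \mu_n$, with $\delta_e - \pi$ mean-zero and $||\delta_e - \pi||^2 = (|G|-1)/|G|$. Each $*\mu_i$ is non-expansive on $L^2(G)$, and its restriction to the mean-zero subspace has operator norm at most $\sigma_i$ when $i \in I_1$, yielding the bound.

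For the inductive step, I would split $L^2(G) = V_0 \oplus V_1$ orthogonally, with $V_0 := Q_1^* L^2(G_1)$ the signed measures constant on $H_1$-cosets and $V_1 := \ker(Q_1)_*$ the signed measures summing to zero on each $H_1$-coset. I first check both summands are invariant under right convolution with any probability measure $\mu$ on $G$: for $V_1$ this is immediate from $(Q_1)_*(\eta * \mu) = (Q_1)_*\eta * (Q_1)_*\mu$; for $V_0$, a short computation shows $(Q_1^* \tilde{h}) * \mu = Q_1^*(\tilde{h} * (Q_1)_* \mu)$. Consequently $P_{V_0}$ and $P_{V_1}$ commute with right convolution, and since $\pi \in V_0$,
\[
||\nu_n - \pi||^2 = ||P_{V_0}\nu_n - \pi||^2 + ||P_{V_1}\nu_n||^2.
\]

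For the $V_0$ piece, $P_{V_0}\nu_n - \pi = Q_1^*((Q_1)_*\nu_n - \pi_1)/|H_1|$ gives $||P_{V_0}\nu_n - \pi||^2 = ||(Q_1)_*\nu_n - \pi_1||_{L^2(G_1)}^2/|H_1|$. Applying the induction hypothesis to the shorter chain $G_1 \twoheadrightarrow \cdots \twoheadrightarrow G_k$ with measures $(Q_1)_*\mu_i$ (the sets $I_j$ and singular values $\sigma_i$ for $j \geq 2$ are preserved because $\tilde{Q}_{j-1} = (Q_{j-1}\circ\cdots\circ Q_2)\circ Q_1$), combined with $|G_1||H_1| = |G|$, accounts for the $j \geq 2$ terms. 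For the $V_1$ piece, $P_{V_1}\nu_n = (P_{V_1}\delta_e) * \mu_1 * \cdots * \mu_n$, where $P_{V_1}\delta_e = \delta_e - \mathbf{1}_{H_1}/|H_1|$ has squared norm $(|H_1|-1)/|H_1|$. Non-expansiveness handles $i \notin I_1$, and the bound $||(*\mu_i)|_{V_1}|| \leq \sigma_i$ for $i \in I_1$ yields $||P_{V_1}\nu_n||^2 \leq \frac{|H_1|-1}{|H_1|}\prod_{i \in I_1}\sigma_i^2 \leq \frac{|G|-1}{|G|}\prod_{i \in I_1}\sigma_i^2$, matching the $j=1$ term. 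The equivalent second form of the bound follows from $|G_{j-1}| = \prod_{i=j}^k|H_i|$.

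The main technical step I expect is verifying the bound $||(*\mu_i)|_{V_1}|| \leq \sigma_i$ for $i \in I_1$. Since $\tilde{Q}_0 = \operatorname{id}_G$, the measure $\mu_i$ is supported in $H_1$, so $*\mu_i$ preserves each coset $gH_1$ and hence $(*\mu_i)|_{V_1}$ is block diagonal across cosets. Normality of $H_1$ lets one identify $L^2(gH_1) \cong L^2(H_1)$ by translation in a way that commutes with $*\mu_i$, so each block is $*\mu_i$ on $L^2(H_1)$ restricted to the mean-zero subspace. Because $\langle\operatorname{supp}\mu_i\rangle = H_1$, the top singular value $1$ of $*\mu_i$ on $L^2(H_1)$ is simple with eigenvector the constants, so the operator norm on the mean-zero subspace equals $\sigma_i$, as needed.
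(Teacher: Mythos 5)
Your proposal is correct and follows essentially the same route as the paper: your Pythagorean split into measures uniform on $H_1$-cosets and their orthogonal complement is exactly Lemma~\ref{lem:rw-l2-distance}, your handling of the $V_1$ piece via invariance and commuting projections is an equivalent (and slightly cleaner) packaging of the paper's coset-by-coset argument in Lemma~\ref{lem:subspace-contraction}, and your induction on chain length $k$ is the paper's downward induction on position $r$ after reindexing (the paper's inductive statement $P(r)$ is your theorem for the truncated chain $G_r\twoheadrightarrow\cdots\twoheadrightarrow G_k$). One minor correction: the intermediate claim that the top singular value $1$ of $*\mu_i$ on $L^2(H_1)$ is \emph{simple} whenever $\langle\operatorname{supp}\mu_i\rangle = H_1$ is false in general (a point mass at the nonidentity element of $\Z/2\Z$ generates the group yet has singular values $1,1$), but you never actually need it: since $*\mu_i$ preserves the orthogonal decomposition $\operatorname{span}\{\pi_{H_1}\}\oplus L^2(H_1)_0$, the second-largest singular value counted with multiplicity equals the operator norm on $L^2(H_1)_0$ regardless of multiplicity at the top.
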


In the case where $k = 1$ and $H_1 = G$, we recover the first part of \cite[Theorem 3.5]{saloffcoste2007inhomogeneous}. We postpone the proof that Theorem~\ref{thm:strong-random-walks} implies Theorem~\ref{thm:intro-random-walks} until the end of this section.

The condition involving the sequence of surjective homomorphisms is an artifact of the inductive proof of the theorem. We are motivated by the special case where the support of each $\mu_i$ generates a normal subgroup of $G$. However, the proof proceeds by showing that the pushforward of $\nu_n$ is close to uniform in $G_k$, then in $G_{k-1}$. We use the steps $\mu_i$ with $i \in I_j$ only when analyzing the pushforward of $\nu_n$ to $G_j$, so we do not always need to make assumptions about the subgroup of $G$ generated by the support of each measure $\mu_i$. We state Theorem~\ref{thm:strong-random-walks} in full generality because this stronger form is more easily applicable to nonabelian (for example, nilpotent) groups.

The following example gives a case that is covered by Theorem~\ref{thm:strong-random-walks} but not by the weaker Theorem~\ref{thm:intro-random-walks}:

\begin{example}
    Consider the dihedral group $G = D_{2n} = \langle r, s \mid r^n = s^2 = (rs)^2 = e\rangle$ with $n > 2$. The subgroup $H_1 = \langle r\rangle$ is normal, but the subgroup $\tilde{H}_2 = \langle s\rangle$ is not normal. We have $G_1 = D_{2n}/\langle r\rangle = \Z/2\Z$ generated by the image of $s$, so the image of $\langle s\rangle$ in the quotient is normal. Let $Q\colon G \twoheadrightarrow \Z/2\Z$ be the quotient map. Let $\mu$ be a measure on $\langle r\rangle$ with  second-largest singular value $\sigma$. Consider the following random walk on $D_{2n}$: \begin{itemize}
        \item For $i$ odd, $\mu_i = \mu$.
        \item For $i$ even, $\mu_i(s) = p$ and $\mu_i(e) = 1 - p$.
    \end{itemize}
    In other words, to take $2k$ steps of this random walk, we take a sequence of $k$ random rotations, then flip $k$ weighted coins to decide whether to insert a reflection between each pair of random rotations.
    
    Say $i$ is even; we compute the singular value $\sigma_i$. In matrix form the operator $*(Q_*\mu_i)$ on $L^2(\Z/2\Z) \cong \R^2$ is given by $\begin{pmatrix}
        1 - p & p \\
        p & 1 - p
    \end{pmatrix}$. It is symmetric, so the singular values are just the absolute values of the eigenvalues. The vector $\begin{pmatrix}
        1 \\ -1
    \end{pmatrix}$ is a $(1 - 2p)$-eigenvector. Thus, the singular values of $*\mu_i$ are 1 and $|1 - 2p|$, so $\sigma_i = |1 - 2p|$. Theorem~\ref{thm:strong-random-walks} says that \[
    ||\mu_1 * \dots * \mu_{2k} - \pi||_{L^2}^2 \leq \sigma^k + |1 - 2p|^k.
    \]
    In particular, if $p = 1/2$, the random walk mixes on $D_{2n}$ half as fast as the $\mu$-random walk mixes on $\langle r\rangle \cong \Z/n\Z$.
\end{example}

Although Theorem~\ref{thm:strong-random-walks} makes weaker assumptions on the subgroups than Theorem~\ref{thm:intro-random-walks}, it is not possible to fully remove the normality assumption, as the following example shows:

\begin{example}\label{ex:normality-necessary}
    Consider the alternating group $A_5$. Recall that $A_5$ is generated by the 3-cycles $(1\; 2\; 3), (1\; 2\; 4), (1\; 2\; 5)$. 
    
    Consider the following three-step time-inhomogeneous ``random walk'' on $A_5$: $X_1$ is uniformly distributed on $\langle(1\; 2\; 3)\rangle$, $X_2$ is uniformly distributed on $\langle(1\; 2\; 4)\rangle$, and $X_3$ is uniformly distributed on $\langle(1\;2\;5)\rangle$. We wish to understand the random element $X_1X_2X_3 \in A_5$.

    The step distributions $\mu_1, \mu_2, \mu_3$ on the respective cyclic groups all have second-largest singular value zero. \details{The uniform probability measure sends every signed measure of total mass 0 to 0. Thus, its operator norm on $\mathcal{M}_0$ is 0.} So, if the conclusion of Theorem~\ref{thm:strong-random-walks} held in this situation, one might expect that $X_1X_2X_3$ is uniformly distributed on $A_5$. However, this is not the case. For instance, if $X_1X_2X_3$ were uniform on $A_5$, then $X_1X_2X_3$ would map $3$ to $4$ with probability $1/5$. But in fact, $X_1X_2X_3$ maps $3$ to $4$ with probability zero (because $X_2X_3$ fixes $3$ with probability 1, while $X_1$ can only map $3$ to $1$, $2$, or $3$ with positive probability).

    The essential obstruction to mixing in this case is evidently related to noncommutativity. The image of $3$ under $X_1X_2X_3$ is restricted because the only one of $X_1$, $X_2$, and $X_3$ that can move $3$ happens at the end of the sequence. What is really happening here is that even though the distribution of $X_1$ is uniform on each coset of $\langle(1\; 2\; 3)\rangle$, multiplication by $X_2$ destroys this property. We need some normality assumption on the subgroups involved so that taking additional random walk steps does not erase progress we have already made toward the uniform distribution.
\end{example}

The proof of Theorem~\ref{thm:strong-random-walks} mainly consists of linear algebra. When a group $G$ is finite, we can view measures on $G$ as finite tuples of numbers, i.e., as vectors in $\R^G$. We start by defining some notation describing this picture.
 
Consider the space $\mathcal{M} = L^2(G)$ of $\R$-valued functions on a finite group $G$. Since $G$ is finite, $\mathcal{M} \cong \R^G$ (with the Euclidean norm). Let $\mathcal{M}_0 = \{\nu \in \mathcal{M} \mid \nu(G) = 0\}$. Let $\mathcal{P} \subseteq \mathcal{M}$ be the set of signed measures $\nu$ on $G$ with $\nu(G) = 1$. Note that $\mathcal{P}$ and $\mathcal{M}$ are parallel affine hyperplanes in $\R^G$. Probability measures on $G$ are points in the simplex formed by the part of $\mathcal{P}$ in the positive orthant. The orthogonal complement to $\mathcal{M}_0$ is the line spanned by the uniform probability measure $\pi$ on $G$, and $\operatorname{span}\{\pi\}$ intersects $\mathcal{P}$ at $\pi$ and nowhere else.

Any measure $\mu_i$ on $G$ acts by on $\mathcal{M}$ by convolution on the right. If $\mu_i$ is a probability measure, the convolution operator $M_i\colon \nu \mapsto \nu*\mu_i$ also sends $\mathcal{P}$ into itself. The following lemma tells us that, in this case, $M_i$ contracts the distance between points of $\mathcal{P}$ and $\pi$.

\begin{lemma}\label{lem:convolution-contraction}
Let $G$ be any finite group and $\mu$ a probability measure on $G$. Let $M$ be the convolution operator $\nu \mapsto \nu * \mu$ and let $\sigma$ be the second-largest singular value of $M$ on $L^2(G)$.
\begin{enumerate}[label=(\arabic*)]
    \item If $\nu$ is a signed measure on $G$, then \[
    ||M\nu||_{L^2} \leq ||\nu||_{L^2}.
    \]
    \item If $\nu, \nu'$ are signed measures on $G$ with $\nu(G) = \nu'(G)$, then \[
    ||M\nu - M\nu'||_{L^2} \leq \sigma ||\nu - \nu'||_{L^2}.
    \]
\end{enumerate}
\end{lemma}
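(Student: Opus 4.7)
The plan is to handle (1) by writing convolution as a convex combination of isometries, and (2) by exhibiting an orthogonal $M$-invariant decomposition that puts the ``trivial'' singular value $1$ on a one-dimensional piece and the rest on $\mathcal{M}_0$.

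For (1), the approach is to write $M = \sum_{h \in G} \mu(h) R_h$, where $R_h\colon L^2(G) \to L^2(G)$ is the right-translation operator $(R_h \nu)(g) = \nu(gh^{-1})$. Each $R_h$ merely permutes the coordinates of $L^2(G)$ and so is an isometry; since $\mu$ is a probability measure, the coefficients are nonnegative and sum to $1$, and the triangle inequality yields $\|M\nu\|_{L^2} \leq \sum_h \mu(h)\|R_h \nu\|_{L^2} = \|\nu\|_{L^2}$.

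For (2), by linearity $M\nu - M\nu' = M(\nu - \nu')$, and the hypothesis $\nu(G) = \nu'(G)$ places $\nu - \nu'$ in $\mathcal{M}_0$, so it suffices to show $\|Mw\|_{L^2} \leq \sigma \|w\|_{L^2}$ for all $w \in \mathcal{M}_0$. To connect this bound to the second-largest singular value of $M$ on the full space $L^2(G)$, I would decompose $L^2(G) = \mathcal{M}_0 \oplus \operatorname{span}\{\pi\}$ as an orthogonal direct sum of $M$-invariant subspaces. Invariance of $\mathcal{M}_0$ is total-mass preservation of convolution: interchanging sums gives $(\nu*\mu)(G) = \nu(G)\mu(G) = \nu(G)$. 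Invariance of $\operatorname{span}\{\pi\}$ (the constant functions) follows from the direct computation $(c\mathbf{1}*\mu)(g) = c \sum_h \mu(h^{-1}g) = c$, reindexing $k = h^{-1}g$. So $M$ is block-diagonal in this decomposition, and the singular values of $M$ are the multiset union of those of $M|_{\operatorname{span}\{\pi\}}$ (just $1$, since $M\pi = \pi$) and those of $M|_{\mathcal{M}_0}$. By part (1) the top singular value of $M$ is $1$, and it is attained on the constants, so $\sigma$ equals the operator norm of $M|_{\mathcal{M}_0}$, which is exactly the inequality we need.

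The lemma is essentially bookkeeping; there is no deep obstacle. The one subtle point to be careful about is that the decomposition $L^2(G) = \mathcal{M}_0 \oplus \operatorname{span}\{\pi\}$ is genuinely \emph{orthogonal} (which it is, since $\mathcal{M}_0$ is the zero-sum hyperplane, perpendicular to $\mathbf{1}$ in the Euclidean inner product). This orthogonality is what lets the singular values of $M$ split across the two invariant summands; without it, one would only get a splitting of eigenvalues, not of singular values, and the identification of $\sigma$ with the operator norm of $M|_{\mathcal{M}_0}$ could fail.
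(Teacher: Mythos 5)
Your argument is correct and is essentially the same argument as the paper's, differing only in presentation. For part (1), rather than invoking Young's convolution inequality for unimodular groups as the paper does, you unpack it directly as a convex combination of right-translation isometries; this is more elementary and self-contained, and buys the reader a small amount of transparency at no cost. For part (2), the paper computes $M^* = *\check\mu$ and $M^*M = *(\mu*\check\mu)$ explicitly and observes that $\pi$ is a common $1$-eigenvector of $M$ and $M^*M$, then passes to the restriction to $\operatorname{span}\{\pi\}^\perp = \mathcal{M}_0$; you instead verify directly that both $\mathcal{M}_0$ and $\operatorname{span}\{\pi\}$ are $M$-invariant and use block-diagonality with respect to an orthogonal splitting to deduce that the singular values partition across the two blocks. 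These are two phrasings of the same fact: once both orthogonal summands are $M$-invariant, $M^*$ (hence $M^*M$) is automatically block-diagonal too, which is exactly the ingredient you flagged as subtle. The one tiny gap in precision is your statement that ``the top singular value of $M$ is $1$, and it is attained on the constants'' and therefore $\sigma = \|M|_{\mathcal{M}_0}\|$ — to rule out the possibility that the largest singular value of $M|_{\mathcal{M}_0}$ is also $1$ (in which case $\sigma = 1$ as well), you should note explicitly that by part (1) every singular value of $M|_{\mathcal{M}_0}$ is at most $1$, so the second-largest singular value of $M$ equals the largest singular value of $M|_{\mathcal{M}_0}$ in all cases; this is a one-line fix and does not affect the conclusion.
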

\begin{proof}
Part (1) is a case of Young's convolution inequality for unimodular groups\details{A \textit{unimodular group} is a group with a Haar measure that is both left and right invariant. To define the $L^2$ norm on $G$, we used the measure that assigns a value of 1 to all group elements, which is both left and right invariant. The proof of this general form is almost identical to the usual proof of Young's inequality via H\"older's inequality, and can be found on \href{https://en.wikipedia.org/wiki/Young\%27s\_convolution\_inequality\#Proof\_by\_H\%C3\%B6lder's\_inequality}{Wikipedia}}. To prove part (2), we will show that $\sigma$ is the $L^2$ operator norm of $M$ on the subspace of $L^2(G)$ consisting of signed measures with total mass 0.

Let $M^*$ be the adjoint operator to $M$. Observe that $M^*$ is also a convolution operator, given by $\nu \mapsto \nu * \check{\mu}$, where $\check{\mu}(g) = \mu(g^{-1})$ for $g \in G$. \details{
Suppose that $\delta_g$ is the indicator function which equals 1 on $g$ and 0 elsewhere. Then $\{\delta_g | g \in G\}$ gives a basis for $L^2(G)$. The $(g, h)$ entry of $M$ is $M\delta_h$ evaluated at $g$, which is \[
(\delta_h * \mu)(g) = \sum_{k \in H} \delta_h(k)\mu(k^{-1}g) = \mu(h^{-1}g).
\]
On the other hand, the $(h, g)$ entry of $M$ is $\mu(g^{-1}h) = \check{\mu}(h^{-1}g)$, which is the $(g, h)$ entry of the convolution operator corresponding to $\check{\mu}$. Thus, the convolution operator corresponding to $\check{\mu}$ is transpose to $M$, and since these operators have real entries, they are adjoint.
}\intuition{
If $X \sim \mu$, then the inner product $\ip{\delta_g}{M\delta_h}$ is $\PP[hX = g] = \PP[gX^{-1} = h] = \ip{\delta_h}{M^*\delta_g}$. 
}Thus, $M^*M$ is the convolution operator given by $\nu \mapsto \nu * \mu * \check{\mu}$. In particular, $M$ and $M^*M$ are each given by convolution with a probability measure, so they have a shared 1-eigenvector: the uniform measure $\pi$ on $G$. The largest singular value of a real matrix coincides with its $L^2$ operator norm. \details{
Let $e_1, \dots, e_d$ be an eigenbasis for $M^*M$ with eigenvalues $\sigma_1^2 \geq \dots \geq \sigma_d^2$.  Let $v = \sum_i a_i e_i$. Then \[
||Mv||_{L^2}^2 = \ip{Mv}{Mv} = \ip{v}{M^*Mv} = \ip{\sum_i a_ie_i}{\sum_i a_i\sigma_ie_i} = \sum_i \sigma_i^2 a_i^2.
\]
Then $||Mv||_{L^2}/||v||_{L^2}$ is maximized when $v = e_1$, and the maximum is $\sigma_1$.
}By part (1), the operator norm of $M$ is at most 1, so the largest eigenvalue of $M^*M$ is exactly 1. Let $L^2(G)_0 = \operatorname{span}\{\pi\}^\perp$. Since $\pi$ is an eigenvector of $M$, the operator $M$ restricts to an operator on $L^2(G)_0$, and since $(M|_{L^2(G)_0})^*(M|_{L^2(G)_0}) = (M^*M)|_{L^2(G)_0}$, the singular values of $M|_{L^2(G)_0}$ are the singular values of $M$ with a copy of 1 (the largest singular value of $M$) excluded. Thus, the operator norm and largest singular value of $M|_{L^2(G)_0}$ is the second-largest singular value of $M$, which is $\sigma$. If $\nu(G) = \nu'(G)$, then $\nu - \nu' \in L^2(H)_0$, so \[
||M(\nu - \nu')||_{L^2} \leq \sigma ||\nu - \nu'||_{L^2}.
\]
\end{proof} 
\begin{remark}
When the support of $\mu$ does not generate $G$, the conclusion of Lemma~\ref{lem:convolution-contraction}(2) still holds. However, we have $\sigma = 1$, so Lemma~\ref{lem:convolution-contraction}(2)   gives no useful information. \details{Suppose $\langle \operatorname{supp} \mu\rangle = H$. The operator $M^*M$ is convolution with $\mu * \check{\mu}$, which is still supported in $H$. Thus, $M^*M$ fixes the subspace $\mathcal{M}_H \subseteq L^2(G)$ of measures uniform on every left coset of $H$, which has dimension $[G : H]$ by Lemma~\ref{lem:pushforward-projection}. This means the multiplicity of 1 as a singular value of $M$ is at least $[G : H]$. If $H \subsetneq G$, then the second-largest singular value of $M$ is 1.}For this reason, in Theorem~\ref{thm:strong-random-walks}, we write $I_j = \{i \mid \langle \operatorname{supp}(\tilde{Q}_{j-1})_*\mu_i \rangle = H_j\}$ rather than $I_j = \{i \mid \operatorname{supp}(\tilde{Q}_{j-1})_*\mu_i \subseteq H_j\}$.
\end{remark}

The second part of Lemma~\ref{lem:convolution-contraction} implies that if the support of $\mu_i$ generates $G$ and $\nu$ is a probability measure, then $||M_i\nu - \pi||_{L^2} \leq \sigma_i||\nu - \pi||_{L^2}$, so applying $M_i$ to a probability measure $n$ times contracts the distance to $\pi$ by a factor of $\sigma_i^n$. More generally, if the support of each $\mu$ generates $G$, then applying any combination of $M_i$ in any order contracts the distance to $\pi$ by the appropriate product of $\sigma_i$ factors. However, when the support of $\mu_i$ is contained in a proper subgroup of $G$, the second-largest singular value of $M_i$ as an operator on $\mathcal{M}$ is always 1. In this case, Lemma~\ref{lem:convolution-contraction}(2) applied to $G$ gives no useful information.

The key idea in the proof of Theorem~\ref{thm:strong-random-walks} is that even though we cannot say outright that applying the operator $M_i$ moves a probability measure closer to uniform, we will show in Lemma~\ref{lem:subspace-contraction} that $M_i$ moves a probability measure closer to some (explicit) \textit{subspace} of $\mathcal{M}$ containing $\pi$. This subspace depends on the subgroup generated by the support of $\mu_i$. If we choose enough subspaces that their intersection is just $\{\pi\}$, then we will be able to show that successive application of different $M_i$'s moves a probability measure closer to that intersection, that is, to the uniform probability measure. The condition that our chosen subspaces intersect in $\{\pi\}$ is exactly the condition $G = \langle \bigcup_{H \in S} H\rangle$ in Theorem~\ref{thm:intro-random-walks}, or the condition that $G_k = \{e\}$ in Theorem~\ref{thm:strong-random-walks}.

For each subgroup $H \leq G$, let $\mathcal{M}_H \subseteq \mathcal{M}$ be the space of functions on $\mathcal{M}$ which are uniform on each left coset of $H$ (i.e., for $\nu \in \mathcal{M}_H$ and $g_1, g_2 \in G$ with $g_1^{-1}g_2 \in H$, $\nu(g_1) = \nu(g_2)$). As $H$ ranges over enough subgroups of $G$, these subspaces $\mathcal{M}_H$ are precisely the subspaces we want the $M_i$'s to move a measure closer to.

We now give two easy lemmas which will help us work with each subspace $\mathcal{M}_H$ concretely.

 \begin{lemma}\label{lem:projection}
    Let $G$ be a finite group and $H \leq G$ be a subgroup. Let $\nu \in \mathcal{M}$. Let $\Tilde{\nu} \in \mathcal{M}_H$ be the signed measure on $G$ given by $\Tilde{\nu}(gh) = \frac{\nu(gH)}{|H|}$ for $h \in H$. Then $\tilde{\nu}$ is the orthogonal projection of $\nu$ onto the subspace $\mathcal{M}_H$ of $\mathcal{M}$. In particular, $d_{L^2}(\nu, \mathcal{M}_H) = ||\nu - \Tilde{\nu}||_{L^2}$.
\end{lemma}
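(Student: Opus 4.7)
The plan is to invoke the standard characterization of orthogonal projection: in a finite-dimensional inner product space, the orthogonal projection of a vector $\nu$ onto a subspace $W$ is the unique vector $\tilde{\nu} \in W$ such that $\nu - \tilde{\nu}$ is orthogonal to every element of $W$. The conclusion about $d_{L^2}(\nu, \mathcal{M}_H)$ is then the familiar Pythagorean statement and requires no further argument.

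First I would record a convenient basis for $\mathcal{M}_H$: since $\mathcal{M}_H$ consists of functions constant on each left coset of $H$ in $G$, the indicator functions $\mathbf{1}_{gH}$, as $gH$ ranges over the $[G:H]$ distinct left cosets, form an orthogonal basis. By linearity, to verify that $\nu - \tilde{\nu} \perp \mathcal{M}_H$, it suffices to show that $\langle \nu - \tilde{\nu}, \mathbf{1}_{gH} \rangle_{L^2} = 0$ for each coset $gH$.

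Second, I would verify both properties directly. That $\tilde{\nu} \in \mathcal{M}_H$ is immediate from the definition: the value $\tilde{\nu}(gh) = \nu(gH)/|H|$ depends only on the coset $gH$, not on the representative $gh$. For orthogonality, a direct computation gives
\[
\langle \nu - \tilde{\nu}, \mathbf{1}_{gH}\rangle_{L^2} = \sum_{h \in H} \bigl(\nu(gh) - \tilde{\nu}(gh)\bigr) = \nu(gH) - |H| \cdot \frac{\nu(gH)}{|H|} = 0,
\]
which completes the identification of $\tilde{\nu}$ as the orthogonal projection.

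There is no real obstacle here; the statement is essentially the observation that the orthogonal projection onto the subspace of functions constant on each block of a partition is given by averaging over each block, specialized to the partition of $G$ into left $H$-cosets. The only point to be careful about is bookkeeping: the factor of $|H|$ in the denominator of $\tilde{\nu}$ is exactly what makes the inner product with $\mathbf{1}_{gH}$ cancel, and the sum $\sum_h \nu(gh)$ equals $\nu(gH)$ by finite additivity of the signed measure.
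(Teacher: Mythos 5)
Your proof is correct and rests on the same underlying computation as the paper's: both reduce to the observation that the inner product of a measure with the indicator (or normalized indicator) of a left coset $gH$ picks out $\nu(gH)$, so projection onto $\mathcal{M}_H$ is averaging over cosets. The only cosmetic difference is that you verify the "residual orthogonal to the subspace" characterization against the orthogonal basis $\{\mathbf{1}_{gH}\}$, while the paper constructs the projection as a direct sum of rank-one projections onto the normalized indicators $\pi_{gH}$; the content is identical.
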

\begin{proof}
    We have decompositions \[
    \mathcal{M}_H = \bigoplus_{gH \in G/H} \operatorname{span}\{\pi_{gH}\} \subset  \bigoplus_{gH \in G/H} L^2(gH) = \mathcal{M},
    \]
    where $\pi_{gH} \in L^2(gH)$ is given by $\pi(gh) = 1/\sqrt{|H|}$ for all $h \in H$. The projection operator $\mathcal{M} \to \mathcal{M}_H$ decomposes as a direct sum of projection operators, one for each coset of $H$. In $L^2(gH)$, projection onto $\operatorname{span}\{\pi_{gH}\}$ is given by inner product with $\pi_{gH}$, and we have $\ip{\nu|_{gH}}{\pi_{gH}}\pi_{gH} = \tilde{\nu}|_{gH}$, which means the projection of $\nu$ onto $\mathcal{M}_H$ is $\tilde{\nu}$.
\end{proof}

\begin{lemma}\label{lem:normal-convolution}
Let $G$ be a finite group and $H$ a normal subgroup of $G$. Let $\mu \in L^2(G)$. Then $*\mu\colon L^2(G) \to L^2(G)$ preserves $\mathcal{M}_H$, i.e., $\mathcal{M}_H * \mu \subseteq \mathcal{M}_H$.
\end{lemma}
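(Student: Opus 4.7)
The plan is to exploit linearity to reduce to the case where $\mu = \delta_a$ is a point mass at some $a \in G$. Since $\mathcal{M}_H$ is a linear subspace of $L^2(G)$ and any $\mu \in L^2(G)$ decomposes as $\mu = \sum_{a \in G} \mu(a) \delta_a$, it suffices to verify that $\nu * \delta_a \in \mathcal{M}_H$ for every $\nu \in \mathcal{M}_H$ and every $a \in G$.

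A direct computation from the convolution formula gives $(\nu * \delta_a)(g) = \nu(ga^{-1})$, so the question becomes: if $\nu$ is constant on left cosets of $H$, is $g \mapsto \nu(ga^{-1})$ also constant on left cosets of $H$? Equivalently, the task is to show $\nu(gha^{-1}) = \nu(ga^{-1})$ for every $g \in G$ and every $h \in H$.

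The key algebraic identity is $gha^{-1} = (ga^{-1})(aha^{-1})$. Normality of $H$ in $G$ gives $aha^{-1} \in H$, so $gha^{-1}$ and $ga^{-1}$ lie in the same right coset of $H$. Because $H$ is normal, right cosets coincide with left cosets, and the hypothesis $\nu \in \mathcal{M}_H$ then yields $\nu(gha^{-1}) = \nu(ga^{-1})$, completing the argument.

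There is no serious obstacle here; the only subtlety is choosing the substitution so that normality can be invoked cleanly, namely passing right multiplication by $h$ through $a^{-1}$ via the conjugate $aha^{-1}$. Without normality, one could cook up counterexamples (along the lines of Example~\ref{ex:normality-necessary}), which is why the hypothesis cannot simply be dropped.
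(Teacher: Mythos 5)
Your proof is correct, and it does differ in presentation from the paper's. The paper argues probabilistically: it treats $\nu$ and $\mu$ as laws of random elements $X, Y$, uses normality to convert left-coset uniformity of $\nu$ into right-coset uniformity, and then shows $\nu * \mu$ is uniform on right cosets by conditioning on $Y$; one last appeal to normality converts back to left cosets. You instead decompose $\mu$ into point masses $\delta_a$ by linearity, compute $(\nu * \delta_a)(g) = \nu(ga^{-1})$, and check left-coset uniformity directly via the conjugation identity $gha^{-1} = (ga^{-1})(aha^{-1})$, with $aha^{-1} \in H$ by normality. Your route is a bit more elementary and has the advantage of treating a general signed measure $\mu$ head-on, matching the lemma statement exactly, whereas the paper's phrasing $Y \sim \mu$ implicitly specializes to probability measures and relies on linearity to extend. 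One small redundancy in your write-up: having shown $gha^{-1} = (ga^{-1})(aha^{-1})$ with $aha^{-1} \in H$, you already have $gha^{-1}$ and $ga^{-1}$ in the same \emph{left} coset of $H$, so the detour through right cosets and the equality of left and right cosets is unnecessary — the hypothesis $\nu \in \mathcal{M}_H$ gives $\nu(gha^{-1}) = \nu(ga^{-1})$ immediately.
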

\begin{proof}
    Suppose $\nu \in \mathcal{M}_H$, so $\nu$ is uniform on each left coset of $H$. Since $H$ is normal, its left and right cosets coincide, so $\nu$ is uniform on each right coset of $H$. Say $X \sim \nu$ and $Y \sim \mu$ are independent, so $\nu * \mu$ is the distribution of $XY$. We have $\PP[X = hg] = \PP[X = h'g]$ for all $h, h' \in H$ and $g \in G$. For $y \in G$, we therefore have \[
    \PP[XY = hg \mid Y = y] = \PP[X = hgy^{-1}] = \PP[X = h'gy^{-1}] = \PP[XY = h'g \mid Y = y]
    \]
    for all $h, h' \in H$ and $g \in G$. Summing over $y$ shows $(\nu * \mu)(hg) = (\nu * \mu)(h'g)$ for all $h, h' \in H$ and $g \in G$. So, $\nu * \mu$ is uniform on right cosets (hence also left cosets) of $H$ and $\nu * \mu \in \mathcal{M}_H$.
\end{proof}

The previous two lemmas will be used together to show that convolution with a probability measure brings another probability measure closer to each subspace $\mathcal{M}_H \subseteq \mathcal{M}$ (corresponding to a normal subgroup $H \trianglelefteq G$). Lemma~\ref{lem:projection} allows us to relate the distance between a measure $\nu$ and the subspace $\mathcal{M}_H$ to the distance between $\nu$ and another measure $\widetilde{\nu}$; we will then use Lemma~\ref{lem:convolution-contraction} to show that this distance is contracted by convolving with a probability measure $\mu$. Then Lemma~\ref{lem:normal-convolution} is used to show that the resulting contracted distance bounds the difference between the new convolved probability measure $\nu * \mu$ and the same subspace $\mathcal{M}_H$. As a consequence, we obtain a bound on the distance between a convolution of many probability measures and the subspace $\mathcal{M}_H$:

\begin{lemma}\label{lem:subspace-contraction}
Let $G$ be a finite group and $H$ a normal subgroup of $G$. Let $\mu_1, \dots, \mu_n$ be probability measures on $G$ and $\nu_n = \mu_1 * \dots * \mu_n$. Let $I_H = \{1 \leq i \leq n \mid \langle\operatorname{supp} \mu_i\rangle = H\}$. For each $i \in I_H$, let $\sigma_i$ be the second-largest singular value of $*\mu_i$ on $H$. Let $\mathcal{M}_H \subseteq L^2(G)$ be the set of signed measures on $G$ that are uniform on left cosets of $H$. Then \[
d_{L^2}(\nu_n, \mathcal{M}_H)^2 \leq \frac{|G| - 1}{|G|}\prod_{i \in I_H} \sigma_i^2.
\] 
\end{lemma}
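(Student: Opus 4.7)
The plan is to prove a one-step inequality $d_{L^2}(\nu * \mu_i, \mathcal{M}_H) \leq c_i \cdot d_{L^2}(\nu, \mathcal{M}_H)$ with $c_i = \sigma_i$ when $i \in I_H$ and $c_i = 1$ otherwise, telescope it along the sequence $\nu_0 = \delta_e$, $\nu_i = \nu_{i-1} * \mu_i$, and finish by bounding the initial distance using $\pi \in \mathcal{M}_H$.

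For the non-contracting step $i \notin I_H$, the normality of $H$ is essential. By Lemma~\ref{lem:normal-convolution}, $*\mu_i$ preserves $\mathcal{M}_H$, so if $\tilde{\nu}$ is the orthogonal projection of $\nu$ onto $\mathcal{M}_H$ given by Lemma~\ref{lem:projection}, then $\tilde{\nu} * \mu_i \in \mathcal{M}_H$. Combined with the $L^2$ non-expansiveness from Lemma~\ref{lem:convolution-contraction}(1), this gives $d_{L^2}(\nu * \mu_i, \mathcal{M}_H) \leq ||\nu * \mu_i - \tilde{\nu} * \mu_i||_{L^2} = ||(\nu - \tilde{\nu}) * \mu_i||_{L^2} \leq ||\nu - \tilde{\nu}||_{L^2} = d_{L^2}(\nu, \mathcal{M}_H)$.

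The substantive work is the contraction for $i \in I_H$, where $\operatorname{supp}\mu_i \subseteq H$. I would exploit the decomposition $L^2(G) = \bigoplus_{gH \in G/H} L^2(gH)$: because $\mu_i$ is supported in $H$, the operator $*\mu_i$ preserves each summand $L^2(gH)$, and under the natural isometric identification $L^2(gH) \cong L^2(H)$ sending $gh$ to $h$ it is identified with $*\mu_i$ acting on $L^2(H)$. The orthogonal projection onto $\mathcal{M}_H$ likewise splits coset-by-coset (by Lemma~\ref{lem:projection}) as the projection onto the line of constant functions on each coset, which under the identification is the line spanned by the uniform probability measure $\pi_H$ on $H$; note that $\pi_H * \mu_i = \pi_H$ and that the coset mass $\nu(gH)$ is preserved under $*\mu_i$. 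Applying Lemma~\ref{lem:convolution-contraction}(2) in $L^2(H)$ to the equal-mass pair consisting of (the transfer of) $\nu|_{gH}$ and the constant measure of total mass $\nu(gH)$ yields a per-coset contraction of the distance to the constant line by a factor of $\sigma_i$; squaring and summing over cosets gives $d_{L^2}(\nu * \mu_i, \mathcal{M}_H)^2 \leq \sigma_i^2 \, d_{L^2}(\nu, \mathcal{M}_H)^2$.

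Telescoping the two types of per-step bounds from $\nu_0 = \delta_e$ to $\nu_n$ yields $d_{L^2}(\nu_n, \mathcal{M}_H)^2 \leq \prod_{i \in I_H} \sigma_i^2 \cdot d_{L^2}(\delta_e, \mathcal{M}_H)^2$, and since $\pi \in \mathcal{M}_H$ we have $d_{L^2}(\delta_e, \mathcal{M}_H)^2 \leq ||\delta_e - \pi||_{L^2}^2 = (|G| - 1)/|G|$. The step I expect to require the most care is the coset-decomposition argument for $i \in I_H$: tracking the interplay between $*\mu_i$ acting within each coset, the isometric identification $L^2(gH) \cong L^2(H)$, and the fact that Lemma~\ref{lem:convolution-contraction}(2) applied in $L^2(H)$ (rather than $L^2(G)$, where the second-largest singular value would be $1$) is what delivers the $\sigma_i$ contraction.
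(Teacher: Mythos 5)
Your proof is correct and takes essentially the same route as the paper's: you prove the same per-step contraction (non-expansion for $i \notin I_H$ via Lemmas~\ref{lem:normal-convolution} and~\ref{lem:convolution-contraction}(1); a $\sigma_i$-contraction for $i \in I_H$ by decomposing $L^2(G) = \bigoplus_{gH} L^2(gH)$, transferring each coset to $L^2(H)$, and invoking Lemma~\ref{lem:convolution-contraction}(2) there), then telescope from $\nu_0 = \delta_e$ and bound the initial distance by $\|\delta_e - \pi\|_{L^2}^2 = (|G|-1)/|G|$. The paper presents the telescoping as an explicit induction and realizes your coset identification as the translation $g^{-1}_*$, but the underlying argument is identical.
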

\begin{proof}
We say $\nu_0 = \delta_e$ is the Dirac measure on the identity of $G$, so that $\nu_{m+1} = \nu_m * \mu_{m+1}$ for all $0 \leq m < n$.

We will show the following statement by induction on $n$: \[\tag{$P(n)$}
d_{L^2}(\nu_n, \mathcal{M}_H)^2 \leq \frac{|G| - 1}{|G|}\prod_{\substack{i \in I_H \\ i \leq n}} \sigma_i^2.
\] 
First, we have $d_{L^2}(\delta_e, \mathcal{M}_H)^2 \leq ||\delta_e - \pi||_{L^2}^2 = 1 - \frac{1}{|G|}$. This proves $P(0)$. \details{Because $|\delta_e - \pi|$ takes the value $1/|G|$ at $|G| - 1$ elements and $1 - 1/|G|$ at one element, the squared $L^2$ norm is \[
||\delta_e - \pi||_{L^2}^2 = \frac{|G| - 1}{|G|^2} + \left(1 - \frac{1}{|G|}\right)^2 = \frac{1}{|G|} - \frac{1}{|G|^2} + 1 - \frac{2}{|G|} + \frac{1}{|G|^2} = 1 - \frac{1}{|G|}
\]}

Now suppose $P(n)$ holds. Let $\Tilde{\nu}_n$ be the orthogonal projection of $\nu_n$ onto $\mathcal{M}_H$, described explicitly in Lemma~\ref{lem:projection}. Then note that $||\nu_n - \Tilde{\nu}_n||_{L^2} = d_{L^2}(\nu_n, \mathcal{M}_H)$. 

There are two cases, depending on whether $n + 1 \in I_H$:

Suppose $\langle \operatorname{supp} \mu_{n+1} \rangle \neq H$, so $n + 1 \notin I_H$. By Lemma~\ref{lem:convolution-contraction}(1), $||\nu_{n+1} - \Tilde{\nu}_n*\mu_{n+1}||_{L^2} \leq ||\nu_n - \Tilde{\nu}_n||_{L^2}$. 

By Lemma~\ref{lem:normal-convolution}, we have $\Tilde{\nu}_n * \mu_{n+1} \in \mathcal{M}_H$, and \[
d_{L^2}(\nu_{n+1}, \mathcal{M}_H) \leq ||\nu_{n+1} - \Tilde{\nu}_n*\mu_{n+1}||_{L^2} \leq ||\nu_n - \Tilde{\nu}_n||_{L^2} = d_{L^2}(\nu_n, \mathcal{M}_H).
\]
Now suppose $\langle \operatorname{supp} \mu_{n+1} \rangle = H$, so $n + 1 \in I_H$. 

For $g \in G$, define $g_*\colon L^2(G) \to L^2(G)$ by $g_*\nu(h) = \nu(g^{-1}h)$. Note that $g_*$ is an automorphism of normed spaces, and for signed measures $\nu$ and $\mu$, we have $g_*(\nu * \mu) = g_*\nu * \mu$. \details{This is true by change of variables $k' = gk$:\begin{align*}
    g_*(\nu * \mu)(h) &= (\nu * \mu)(g^{-1}h) = \sum_{k\in G} \nu(k)\mu(k^{-1}g^{-1}h)  \\
    &= \sum_{k' \in G}\nu(g^{-1}k')\mu((k')^{-1}h) = \sum_{k' \in G}g_*\nu(k')\mu((k')^{-1}h) = (g_*\nu * \mu)(h).
\end{align*}}For each left coset $gH$ of $H$ we have \begin{align*}
||(\nu_n * \mu_{n+1})|_{gH} - (\Tilde{\nu}_n * \mu_{n+1})|_{gH}||_{L^2(gH)} &= ||(g^{-1}_*(\nu_n * \mu_{n+1}))|_{H} - (g^{-1}_*(\Tilde{\nu}_n * \mu_{n+1}))|_{H}||_{L^2(H)} \\
&= ||(g^{-1}_*\nu_n * \mu_{n+1})|_{H} - (g^{-1}_*\Tilde{\nu}_n * \mu_{n+1})|_{H}||_{L^2(H)}
\end{align*}
Since $\mu_{n+1}$ is supported on a subset of $H$, for any $\nu \in L^2(G)$ we have $(\nu * \mu_{n+1})|_{H} = \nu|_{H} * \mu_{n+1}|_H$. \details{
For $h \in H$ we have \[
(\nu * \mu_{n+1})(h) = \sum_{k \in G}\nu(k)\mu_{n+1}(k^{-1}h).
\]
The term $\mu_{n+1}(k^{-1}h)$ vanishes unless $k^{-1}h \in H$, so $k \in H_j$. Thus \[
(\nu * \mu_{n+1})(h) = \sum_{k \in H_j}\nu(k)\mu_{n+1}(k^{-1}h),
\]
which is the expression for convolution of $\nu|_{H}$ and $\mu|_{H}$ as signed measures on $H$.
}Thus, \[
||(\nu_n * \mu_{n+1})|_{gH} - (\Tilde{\nu}_n * \mu_{n+1})|_{gH}||_{L^2(gH)} = ||g^{-1}_*\nu_n|_{H} * \mu_{n+1}|_{H} - g^{-1}_*\Tilde{\nu}_n|_{H} * \mu_{n+1}|_{H_j}||_{L^2(H)}.
\]
By Lemma~\ref{lem:projection}, $\Tilde{\nu}_n|_{gH}$ is uniform on $gH$ with total mass $\nu_n(gH)$, so $g^{-1}_*\Tilde{\nu}_n|_{H}$ is uniform on $H$ with total mass $\nu_n(gH)$. Thus, $g^{-1}_*\Tilde{\nu}_n|_{H} * \mu_{n+1}|_{H} = g^{-1}_*\Tilde{\nu}_n|_{H}$.

Applying Lemma~\ref{lem:convolution-contraction}(2) on $H$, we get \begin{align*}
||g^{-1}_*\nu_n|_{H} * \mu_{n+1}|_{H} - g^{-1}_*\Tilde{\nu}_n|_{H}||_{L^2(H)} &\leq \sigma_{n+1}||g^{-1}_*\nu_n|_{H} - g^{-1}_*\Tilde{\nu}_n|_{H}||_{L^2(H)} \\
&= \sigma_{n+1}||(\nu_n - \Tilde{\nu}_n)|_{gH}||_{L^2(gH)}.
\end{align*}
\details{
We are using that pushforward and restriction are both linear maps.
}

Adding up over cosets of $H$ gives \[
||\nu_{n+1} - \Tilde{\nu}_n||_{L^2(G)} \leq \sigma_{n+1}||\nu_n - \Tilde{\nu}_n||_{L^2(G)}.
\]
Hence, \[
d_{L^2}(\nu_{n+1}, \mathcal{M}_{H}) \leq ||\nu_{n+1} - \Tilde{\nu}_n||_{L^2(G)} \leq \sigma_{n+1}||\nu_n - \Tilde{\nu}_n||_{L^2(G)} = \sigma_{n+1}d_{L^2}(\nu_n, \mathcal{M}_{H}).
\]
By induction, we get \[
d_{L^2}(\nu_n, \mathcal{M}_{H})^2 \leq \frac{|G| - 1}{|G|}\prod_{\substack{i \in I_{H} \\ i \leq n}} \sigma_i^2.
\]
\end{proof}

Note that if $H$ is a subgroup of $G$ and $P\colon G \twoheadrightarrow G/H$ is the projection onto the left coset space, then one can identify $L^2(G/H)$ with $\mathcal{M}_H$ as follows:

\begin{lemma}\label{lem:pushforward-projection}
Let $G$ be a finite group and $H$ a subgroup. Let $\mathcal{M}_H \subset L^2(G)$ be the space of measures uniform on left cosets of $H$. Let $P\colon G \twoheadrightarrow G/H$ send each element to the corresponding left coset of $H$. Then the map $\phi\colon L^2(G) \to L^2(G/H)$ sending $\mu$ to $|H|^{-1/2} P_*\nu$ restricts to an isometry of normed spaces $\phi|_{\mathcal{M}_H}\colon \mathcal{M}_H \cong L^2(G/H)$. Moreover, $(\phi|_{\mathcal{M}_H})^{-1}\circ \phi$ is the orthogonal projection map $L^2(G) \twoheadrightarrow \mathcal{M}_H$.
\end{lemma}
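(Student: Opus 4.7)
The plan is to carry out a direct calculation using the explicit form of the pushforward, then match the result with the orthogonal projection formula already obtained in Lemma~\ref{lem:projection}. This is essentially a bookkeeping argument about how the factor of $|H|^{-1/2}$ is calibrated to produce an isometry.

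First, I would unpack $\phi$ on a general $\nu \in L^2(G)$: by definition $P_*\nu(gH) = \sum_{h \in H} \nu(gh)$, so $\phi(\nu)(gH) = |H|^{-1/2}\sum_{h \in H}\nu(gh)$. Next, for $\nu \in \mathcal{M}_H$, write $\nu(gh) = c_{gH}$ for all $h \in H$, so that $\phi(\nu)(gH) = |H|^{1/2} c_{gH}$. Comparing norms,
\[
\|\nu\|_{L^2(G)}^2 = \sum_{gH \in G/H}\sum_{h \in H} c_{gH}^2 = |H| \sum_{gH \in G/H} c_{gH}^2 = \sum_{gH \in G/H}|H| \, c_{gH}^2 = \|\phi(\nu)\|_{L^2(G/H)}^2,
\]
so $\phi|_{\mathcal{M}_H}$ is a linear isometry. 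For surjectivity, given $f \in L^2(G/H)$, define $\nu_f \in \mathcal{M}_H$ by $\nu_f(gh) = |H|^{-1/2} f(gH)$ for all $g \in G$, $h \in H$; a one-line check shows $\phi(\nu_f) = f$, establishing that $\phi|_{\mathcal{M}_H}$ is bijective.

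For the final claim, apply Lemma~\ref{lem:projection}: the orthogonal projection of $\nu \in L^2(G)$ onto $\mathcal{M}_H$ is the measure $\tilde\nu \in \mathcal{M}_H$ with $\tilde\nu(gh) = \nu(gH)/|H|$. On the other hand, using the formula for $\phi$ above and the inversion formula from the previous paragraph, we get
\[
(\phi|_{\mathcal{M}_H})^{-1}(\phi(\nu))(gh) = |H|^{-1/2}\,\phi(\nu)(gH) = |H|^{-1/2} \cdot |H|^{-1/2}\,\nu(gH) = \frac{\nu(gH)}{|H|},
\]
which matches $\tilde\nu(gh)$ exactly. Hence $(\phi|_{\mathcal{M}_H})^{-1}\circ \phi$ coincides with the orthogonal projection $L^2(G)\twoheadrightarrow \mathcal{M}_H$.

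There is no real obstacle here; the only thing to be careful about is tracking the two powers of $|H|^{-1/2}$ (one coming from the definition of $\phi$, one from inverting $\phi|_{\mathcal{M}_H}$) so that they combine into the factor of $|H|^{-1}$ appearing in the projection formula of Lemma~\ref{lem:projection}.
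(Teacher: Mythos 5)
Your proof is correct and takes essentially the same approach as the paper's: a direct coset-by-coset norm calculation to show the isometry, then matching $(\phi|_{\mathcal{M}_H})^{-1}\circ\phi$ against the explicit projection formula from Lemma~\ref{lem:projection}. Incidentally, your inversion formula $(\phi|_{\mathcal{M}_H})^{-1}(\mu)(g) = |H|^{-1/2}\mu(gH)$ carries the correct scaling, whereas the paper writes $\mu(gH)/|H|$, which is a minor typo there; the downstream conclusion is the same.
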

\begin{proof}
The map $\phi|_{\mathcal{M}_H}$ is norm-preserving because if $\nu$ is uniform on left cosets of $H$, then $\nu(gH) = |H|\nu(g)$ for $g \in G$. Indeed, we have \[
|||H|^{-1/2} P_*\nu||_{L^2(G/H)} = \frac{1}{|H|}\sum_{gH \in G/H} |\nu(gH)|^2 = \frac{1}{|H|^2} \sum_{g \in G} |\nu(gH)|^2 = \frac{1}{|H|^2}\sum_{g \in G} |H|^2 |\nu(g)|^2 = ||\nu||_{L^2(G)}.
\]
The inverse map $(\phi|_{\mathcal{M}_H})^{-1}$ is given by $(\phi|_{\mathcal{M}_H})^{-1}(\mu)(g) = \mu(gH)/|H|$. The fact that $(\phi|_{\mathcal{M}_H})^{-1}\circ \phi$ is the orthogonal projection map $L^2(G) \to \mathcal{M}_H$ follows from Lemma~\ref{lem:projection}.
\end{proof}

Identifying $L^2(G/H)$ with $\mathcal{M}_H$ will allow us to prove Theorem~\ref{thm:strong-random-walks} by induction. Using Lemma~\ref{lem:subspace-contraction}, we can say that a random walk approaches the subspace $\mathcal{M}_H$. Then, we can consider its projection onto $\mathcal{M}_H$ as a random walk on $G/H$. This allows us to ignore all random walk steps supported in $H$. The key ingredient that allows us to combine Lemma~\ref{lem:subspace-contraction} with the inductive hypothesis is the following lemma:

\begin{lemma}\label{lem:rw-l2-distance}
Let $G$ be a finite group and $H \leq G$. Let $\pi$ be the uniform distribution on $G$ and let $\mu$ be any signed measure on $G$. Let $P\colon G \twoheadrightarrow G/H$ be the set map sending each element of $G$ to the corresponding left coset of $H$. Let $\mathcal{M}_H \subseteq L^2(G)$ be the set of signed measures uniform on left cosets of $H$. Then \[
||\mu - \pi||_{L^2(G)}^2  = \frac{1}{|H|}||P_* \mu - P_* \pi||_{L^2(G/H)}^2 + d_{L^2}(\mu, \mathcal{M}_H)^2
\]
\end{lemma}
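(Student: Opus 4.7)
The plan is to recognize the statement as a Pythagorean decomposition of $\mu - \pi$ with respect to the subspace $\mathcal{M}_H \subseteq L^2(G)$, and then to identify the component along $\mathcal{M}_H$ via the isometry from Lemma~\ref{lem:pushforward-projection}.

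First I would observe that $\pi$ lies in $\mathcal{M}_H$, since a constant measure is certainly uniform on each left coset of $H$. Let $\tilde{\mu}$ denote the orthogonal projection of $\mu$ onto $\mathcal{M}_H$, as described explicitly in Lemma~\ref{lem:projection}. Then $\mu - \tilde{\mu}$ is orthogonal to $\mathcal{M}_H$ while $\tilde{\mu} - \pi$ lies in $\mathcal{M}_H$, so the Pythagorean theorem gives
\[
\|\mu - \pi\|_{L^2(G)}^2 = \|\mu - \tilde{\mu}\|_{L^2(G)}^2 + \|\tilde{\mu} - \pi\|_{L^2(G)}^2.
\]
The first summand is exactly $d_{L^2}(\mu, \mathcal{M}_H)^2$ by definition.

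Next, I would identify the second summand with $\tfrac{1}{|H|}\|P_*\mu - P_*\pi\|_{L^2(G/H)}^2$. The key observation is that $P_*\tilde{\mu} = P_*\mu$: for any coset $gH$, the explicit formula $\tilde{\mu}(gh) = \mu(gH)/|H|$ from Lemma~\ref{lem:projection} gives $P_*\tilde{\mu}(gH) = \sum_{h \in H}\tilde{\mu}(gh) = \mu(gH) = P_*\mu(gH)$. Applying the isometry $\phi\colon \mathcal{M}_H \xrightarrow{\sim} L^2(G/H)$, $\nu \mapsto |H|^{-1/2} P_*\nu$, from Lemma~\ref{lem:pushforward-projection} to $\tilde{\mu} - \pi \in \mathcal{M}_H$ yields
\[
\|\tilde{\mu} - \pi\|_{L^2(G)}^2 = \tfrac{1}{|H|}\|P_*\tilde{\mu} - P_*\pi\|_{L^2(G/H)}^2 = \tfrac{1}{|H|}\|P_*\mu - P_*\pi\|_{L^2(G/H)}^2.
\]
Combining the two pieces gives the claimed identity.

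There is no real obstacle here; the lemma is essentially the Pythagorean theorem packaged so that one side of the decomposition can be read off from the pushforward $P_*\mu$ on $G/H$ and the other is the definition of $L^2$ distance from $\mathcal{M}_H$. The only mild subtlety is checking that passing from $\mu$ to $\tilde{\mu}$ before pushing forward does nothing, which is immediate from the explicit formula in Lemma~\ref{lem:projection}.
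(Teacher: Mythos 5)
Your proof is correct and follows the same route as the paper: a Pythagorean decomposition of $\mu - \pi$ across $\mathcal{M}_H$, combined with the isometry of Lemma~\ref{lem:pushforward-projection} and the observation that $P_*\tilde{\mu} = P_*\mu$ from Lemma~\ref{lem:projection}. Your explicit remarks that $\pi \in \mathcal{M}_H$ and the coset-by-coset verification of $P_*\tilde{\mu} = P_*\mu$ are small elaborations the paper leaves implicit, but the argument is otherwise identical.
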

\begin{proof}
Let $\tilde{\mu}$ be the orthogonal projection of $\mu$ onto $\mathcal{M}_H$. By Lemma~\ref{lem:projection}, we have $P_*\mu = P_*\tilde{\mu}$. Then \begin{align*}
||\mu - \pi||_{L^2(G)}^2 &= ||\tilde{\mu} - \pi||_{L^2(G)}^2 + ||\mu - \tilde{\mu}||_{L^2(G)}^2 \\
&= \frac{1}{|H|}||P_* \tilde{\mu} - P_* \pi||_{L^2(G/H)}^2 + d_{L^2}(\mu, \mathcal{M}_H)^2 \\
&= \frac{1}{|H|}||P_* \mu - P_* \pi||_{L^2(G/H)}^2 + d_{L^2}(\mu, \mathcal{M}_H)^2
\end{align*}
\end{proof}

The final lemma before the proof of Theorem~\ref{thm:strong-random-walks} is a pair of facts about pushforwards to quotients:
\begin{lemma}\label{lem:pushforward-facts}
Let $G$ be a finite group and $H$ a normal subgroup. Let $P\colon G \twoheadrightarrow G/H$ be the projection. Then:
\begin{enumerate}[label=(\arabic*)]
    \item If $\mu, \nu \in L^2(G)$, we have $P_*(\mu * \nu) = P_*\mu * P_*\nu$.
    \item Suppose $\mu \in L^2(G)$ and the second-largest singular value of $*\mu$ on $\langle \operatorname{supp} \mu\rangle$ is $\sigma$. Then the second-largest singular value of $*(P_*\mu)$ on $P(\langle \operatorname{supp} \mu\rangle)$ is at most $\sigma$.
\end{enumerate}
\end{lemma}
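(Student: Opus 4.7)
The plan is to dispatch part (1) by a direct reindexing and then use part (1), together with Lemma~\ref{lem:normal-convolution}, to decompose $L^2(G)$ as an orthogonal direct sum of $(*\mu)$-invariant subspaces and read off part (2) from the singular values of each block.

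For part (1), I unfold both sides. The left side is $P_*(\mu * \nu)(gH) = \sum_{g' \in gH}\sum_{k \in G}\mu(k)\nu(k^{-1}g')$, and the right side is $(P_*\mu * P_*\nu)(gH) = \sum_{kH}\bigl(\sum_{k_1 \in kH}\mu(k_1)\bigr)\bigl(\sum_{g_1 \in k^{-1}gH}\nu(g_1)\bigr)$; normality of $H$ makes the convolution on $G/H$ well-defined and lets me substitute $g' = k_1 g_1 \in gH$ so that the triple sum on the right collapses into the one on the left.

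For part (2), I first reduce to the case $K := \langle \operatorname{supp}\mu\rangle = G$ by replacing $G$ with $K$ and $H$ with $K \cap H \trianglelefteq K$; the restriction $P|_K$ descends to an isomorphism $K/(K \cap H) \xrightarrow{\sim} P(K)$ under which $P_*\mu$ corresponds to the pushforward of $\mu$ to this quotient. Next, I show that both $\mathcal{M}_H$ and $\mathcal{M}_H^\perp$ are $(*\mu)$-invariant. Invariance of $\mathcal{M}_H$ is Lemma~\ref{lem:normal-convolution}. For $\mathcal{M}_H^\perp$, I identify it with $\ker P_*$: since $\mathcal{M}_H$ is spanned by the indicator functions of left cosets, $\rho \in \mathcal{M}_H^\perp$ iff $\sum_{g' \in gH}\rho(g') = 0$ for all $gH$, i.e., iff $P_*\rho = 0$. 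Then part (1) gives $P_*(\rho*\mu) = P_*\rho * P_*\mu = 0$ whenever $\rho \in \ker P_*$, so $\mathcal{M}_H^\perp$ is $(*\mu)$-invariant as well.

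With $L^2(G) = \mathcal{M}_H \oplus \mathcal{M}_H^\perp$ an orthogonal $(*\mu)$-invariant decomposition, $*\mu$ is block-diagonal and so is $(*\mu)^*(*\mu)$, so the multiset of singular values of $*\mu$ is the disjoint union of those of its two restrictions; in particular the second-largest singular value of $(*\mu)|_{\mathcal{M}_H}$ is at most that of $*\mu$. Finally, the isometry $\phi\colon \mathcal{M}_H \xrightarrow{\sim} L^2(G/H)$ of Lemma~\ref{lem:pushforward-projection} satisfies $\phi \circ (*\mu|_{\mathcal{M}_H}) = (*P_*\mu)\circ \phi$ (a one-line check from part (1)), so $(*\mu)|_{\mathcal{M}_H}$ and $*P_*\mu$ have identical singular values, giving the bound. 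The main obstacle I anticipate is being careful that the singular-value fact I use requires \emph{both} summands to be $(*\mu)$-invariant—Lemma~\ref{lem:normal-convolution} alone gives only one, which is why part (1) is genuinely needed to get invariance of $\mathcal{M}_H^\perp$ through the identification with $\ker P_*$.
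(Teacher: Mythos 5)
Your proof is correct, and part (2) takes a genuinely different route from the paper's. The paper works entirely with the variational (supremum) characterization of the second-largest singular value as the operator norm on the total-mass-zero subspace: it pulls the supremum defining $\sigma'$ back through the isometry $\phi\colon \mathcal{M}_H \to L^2(G/H)$, uses part (1) together with the fact that $\mathcal{M}_H$ is $*\mu$-invariant to rewrite it as an intrinsic supremum over $\{\tilde\nu \in \mathcal{M}_H : \tilde\nu(G)=0\}$, and then simply enlarges the set over which the supremum is taken to all of $\{\tilde\nu \in L^2(G) : \tilde\nu(G)=0\}$ to get $\sigma' \le \sigma$. You instead establish that $L^2(G) = \mathcal{M}_H \oplus \mathcal{M}_H^\perp$ is an orthogonal decomposition into $*\mu$-invariant subspaces (identifying $\mathcal{M}_H^\perp = \ker P_*$ and using part (1) for its invariance), deduce that the singular values of $(*\mu)|_{\mathcal{M}_H}$ form a sub-multiset of those of $*\mu$, and then transfer across $\phi$. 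Both are sound; the paper's enlargement-of-supremum move is a bit shorter and avoids explicitly discussing $\mathcal{M}_H^\perp$, while your block-diagonal decomposition yields the slightly stronger structural fact that the singular spectrum of $*P_*\mu$ is literally a sub-multiset of that of $*\mu$ (not merely that its second-largest value is bounded), at the small cost of one extra ingredient (Lemma~\ref{lem:normal-convolution} for invariance of $\mathcal{M}_H$, which the paper also implicitly relies on via $P_*(\tilde\nu * \mu)$ reconstructing $\tilde\nu*\mu \in \mathcal{M}_H$). Your reduction to $\langle\operatorname{supp}\mu\rangle = G$ via the second isomorphism theorem is a more explicit version of the one-line reduction in the paper and is valid.
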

\begin{proof}
\begin{enumerate}[label=(\arabic*)]
    \item We have \[
    P_*(\mu * \nu)(gH) = \sum_{h \in H} (\mu * \nu)(gh) = \sum_{h \in H}\sum_{k \in G} \mu(k)\nu(k^{-1}gh) = \sum_{kH \in G/H}\sum_{h \in H} \mu(kh)\nu(h^{-1}k^{-1}gH)
    \]
    \details{
    \begin{align*}
    P_*(\mu * \nu)(gH) &= \sum_{h \in H} (\mu * \nu)(gh) = \sum_{h \in H}\sum_{k \in G} \mu(k)\nu(k^{-1}gh) = \sum_{k \in G}\sum_{h \in H}\mu(k)\nu(k^{-1}gh) \\
    &= \sum_{k \in G} \mu(k) \nu(k^{-1}gH) = \sum_{kH \in G/H}\sum_{h \in H} \mu(kh)\nu(h^{-1}k^{-1}gH)
    \end{align*}
    }Since left cosets of $H$ are right cosets too, $h^{-1}k^{-1}gH = k^{-1}gH$, so \[
    P_*(\mu * \nu)(gH) = \sum_{kH \in G/H}\sum_{h \in H} \mu(kh)\nu(k^{-1}gH) = \sum_{kH \in G/H} \mu(kH)\nu(k^{-1}gH) = P_*\mu * P_*\nu.
    \]
    \item By restricting to the projection $\langle \operatorname{supp} \mu\rangle \twoheadrightarrow P(\langle \operatorname{supp} \mu\rangle)$ we may as well assume $\langle \operatorname{supp} \mu \rangle = G$.
    
    Recall (from the proof of Lemma~\ref{lem:convolution-contraction}(2)) that the second-largest singular value of $*\mu$ is the operator norm of $*\mu$ acting on the subspace $L^2(G)_0$ of measures with total mass 0. Suppose $\sigma'$ is the second-largest singular value of $*(P_*\mu)$. Then \[
    \sigma' = \sup_{\substack{\nu \in L^2(G/H) \\ \nu(G/H) = 0}} \frac{||\nu * P_*\mu||_{L^2(G/H)}}{||\nu||_{L^2(G/H)}}
    \]
    Let $\mathcal{M}_H \subseteq L^2(G)$ be the space of signed measures uniform on cosets of $H$. By Lemma~\ref{lem:pushforward-projection} and part (1) of this lemma, we have \[
    \sup_{\substack{\nu \in L^2(G/H) \\ \nu(G/H) = 0}} \frac{||\nu * P_*\mu||_{L^2(G/H)}}{||\nu||_{L^2(G/H)}} = \sup_{\substack{\tilde{\nu} \in \mathcal{M}_H \\ \tilde{\nu}(G) = 0}} \frac{||P_*\tilde{\nu} * P_*\mu||_{L^2(G/H)}}{||\tilde{\nu}||_{L^2(G/H)}} = \sup_{\substack{\tilde{\nu} \in \mathcal{M}_H \\ \tilde{\nu}(G) = 0}} \frac{||P_*(\tilde{\nu} * \mu)||_{L^2(G/H)}}{||P_*\tilde{\nu}||_{L^2(G/H)}}
    \]
    Then by Lemma~\ref{lem:pushforward-projection} again, we have \[
    \sigma' = \sup_{\substack{\tilde{\nu} \in \mathcal{M}_H \\ \tilde{\nu}(G) = 0}} \frac{||P_*(\tilde{\nu} * \mu)||_{L^2(G/H)}}{||P_*\tilde{\nu}||_{L^2(G/H)}} = \sup_{\substack{\tilde{\nu} \in \mathcal{M}_H \\ \tilde{\nu}(G) = 0}} \frac{||\tilde{\nu} * \mu||_{L^2(G)}}{||\tilde{\nu}||_{L^2(G)}} \leq \sup_{\substack{\tilde{\nu} \in L^2(G) \\ \tilde{\nu}(G) = 0}} \frac{||\tilde{\nu} * \mu||_{L^2(G)}}{||\tilde{\nu}||_{L^2(G)}} = \sigma
    \]
\end{enumerate}
\end{proof}

\begin{proof}[Proof of Theorem~\ref{thm:strong-random-walks}]
We will prove the following statement by induction on $r$: \begin{align*}\label{eq:random-walk-inductive-step}
||(\Tilde{Q}_r)_* \nu_n - (\Tilde{Q}_r)_* \pi||_{L^2(G_r)}^2 \leq \sum_{j = r + 1}^k \frac{|G_{j-1}| - 1}{|G_r|}\left(\prod_{i \in I_j} \sigma_i^2\right).\tag{$P(r)$}
\end{align*}
When $r = k$, the right hand side of $P(r)$ is 0. Since both $(\Tilde{Q}_r)_* \nu_n$ and $(\Tilde{Q}_r)_* \pi$ are the unique probability measure on $G_r = \{e\}$, the left hand side is also 0, so $P(k)$ holds.

Now suppose $P(r + 1)$ holds. We will show $P(r)$ holds.

Since $(\Tilde{Q}_r)_* \pi$ is the uniform distribution on $G_r$, Lemma~\ref{lem:rw-l2-distance} applied to $G_r$ and $H_{r+1}$ says \begin{align*}
||(\Tilde{Q}_r)_* \nu_n - (\Tilde{Q}_r)_* \pi||_{L^2(G_r)}^2 &= \frac{1}{|H_{r+1}|}||(\Tilde{Q}_{r+1})_* \nu_n - (\Tilde{Q}_{r+1})_* \pi||_{L^2(G_{r+1})}^2 + d_{L^2}((\Tilde{Q}_r)_* \nu_n, \mathcal{M}_{H_{r+1}})^2
\end{align*}
where $\mathcal{M}_{H_{r+1}}$ is the subspace of $L^2(G_r)$ consisting of measures uniform on cosets of $H_{r+1}$. By the inductive hypothesis, \begin{align*}
\frac{1}{|H_{r+1}|}||(\Tilde{Q}_{r+1})_* \nu_n - (\Tilde{Q}_{r+1})_* \pi||_{L^2(G_{r+1})}^2 &\leq \sum_{j = r + 2}^k \frac{|G_{j-1}| - 1}{|G_{r+1}||H_{r+1}|}\left(\prod_{i \in I_j} \sigma_i^2\right) \\
&= \sum_{j = r + 2}^k \frac{|G_{j-1}| - 1}{|G_r|}\left(\prod_{i \in I_j} \sigma_i^2\right).
\end{align*}
By Lemma~\ref{lem:pushforward-facts}(1), we have $(\Tilde{Q}_r)_* \nu_n = (\Tilde{Q}_r)_* \mu_1 * \dots * (\Tilde{Q}_r)_* \mu_n$, so by Lemma~\ref{lem:subspace-contraction} applied to $G_r$, $H_{r+1}$, and the measures $(\Tilde{Q}_r)_* \mu_i$, we get \[
d_{L^2}((\Tilde{Q}_r)_* \nu_n, \mathcal{M}_{H_{r+1}})^2 \leq \frac{|G_r| - 1}{|G_r|} \prod_{i \in I_{r + 1}} \sigma_i^2.
\]
Hence, \begin{align*}
    ||(\Tilde{Q}_r)_* \nu_n - (\Tilde{Q}_r)_* \pi||_{L^2(G_r)}^2 & \leq \sum_{j = r + 2}^k \frac{|G_{j-1}| - 1}{|G_r|}\left(\prod_{i \in I_j} \sigma_i^2\right) + \frac{|G_r| - 1}{|G_r|}\prod_{i \in I_{r+1}} \sigma_i^2 \\
    &= \sum_{j = r + 1}^k \frac{|G_{j-1}| - 1}{|G_r|}\left(\prod_{i \in I_j} \sigma_i^2\right),
\end{align*}
completing the induction. When $r = 0$, we get \[
||\nu_n - \pi||_{L^2}^2 \leq \sum_{j=1}^k \frac{|G_{j-1}| - 1}{|G|}\left(\prod_{i \in I_j} \sigma_i^2\right).
\]
\end{proof}

Now we show how Theorem~\ref{thm:strong-random-walks} implies Theorem~\ref{thm:intro-random-walks} by giving a corollary slightly stronger than Theorem~\ref{thm:intro-random-walks}.

\begin{corollary}\label{cor:random-walks}
Let $G$ be a finite group, and let $\mu_1, \mu_2, \dots, \mu_n$ be probability measures on $G$. For each subgroup $H$ of $G$, let $I_H = \{i \mid H = \langle \operatorname{supp} \mu_i\rangle\}$. Let $H_1, \dots, H_k$ be a finite set of subgroups of $G$ such that $G = \left\langle \bigcup_{j = 1}^k H_j\right\rangle$ and the image of $H_j$ in $G/H_1\cdots H_{j-1}$ is a normal subgroup for all $1 \leq j \leq k$. Write $\nu_n = \mu_1 * \dots * \mu_n$. Also, for each $i$, let $\sigma_i$ be the second-largest singular value of $*\mu_i$ as an operator on $L^2(\langle \operatorname{supp} \mu_i \rangle)$. Let $\pi$ be the uniform distribution on $G$. 

If $I_{H_j}$ is nonempty for each $1 \leq j \leq k$, we have \[
||\nu_n - \pi||_{L^2} \leq \sum_{j = 1}^k \left(\prod_{i \in I_{H_j}} \sigma_i\right).
\]
\end{corollary}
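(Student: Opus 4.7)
The plan is to reduce the corollary to Theorem~\ref{thm:strong-random-walks} by constructing an appropriate chain of quotients from the subgroups $H_1, \dots, H_k$, and then comparing the index sets and singular values in the two formulations.

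First, I would set $N_j = H_1 H_2 \cdots H_j$ for $0 \leq j \leq k$ (with $N_0 = \{e\}$) and show by induction on $j$ that each $N_j$ is a normal subgroup of $G$. The base case $N_1 = H_1$ is the $j=1$ instance of the hypothesis. For the inductive step, the normality of $N_{j-1}$ in $G$ ensures that $N_j = N_{j-1} H_j$ is itself a subgroup of $G$, and the hypothesis that the image of $H_j$ in $G/N_{j-1}$ is normal translates directly (via the correspondence theorem) to $N_j \trianglelefteq G$. I would also observe that $N_k = G$, since $N_k$ is a subgroup of $G$ containing every $H_j$ and therefore contains $\langle \bigcup_j H_j\rangle = G$.

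Next, I would apply Theorem~\ref{thm:strong-random-walks} to the chain $G_j := G/N_j$ with $Q_j \colon G_{j-1} \twoheadrightarrow G_j$ the canonical quotient, whose kernel $\tilde{H}_j := N_j/N_{j-1}$ is normal in $G_{j-1}$ by construction. Write $\tilde{I}_j$ and $\tilde{\sigma}_i$ for the index set and singular values defined in Theorem~\ref{thm:strong-random-walks}. The two key observations are (a) $I_{H_j} \subseteq \tilde{I}_j$, because whenever $\langle \operatorname{supp} \mu_i\rangle = H_j$, the pushforward $(\tilde{Q}_{j-1})_* \mu_i$ is supported on and generates the image $\tilde{H}_j$ of $H_j$ in $G/N_{j-1}$; and (b) for each such $i$, $\tilde{\sigma}_i \leq \sigma_i$, which is exactly Lemma~\ref{lem:pushforward-facts}(2) applied to the quotient $G \twoheadrightarrow G/N_{j-1}$ by the normal subgroup $N_{j-1}$. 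In particular, (a) ensures that each $\tilde{I}_j$ is nonempty, so the hypotheses of Theorem~\ref{thm:strong-random-walks} are satisfied.

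Combining these observations with the elementary bounds $\tilde{\sigma}_i \leq 1$ (so enlarging the index set only shrinks the product) and $\frac{|G_{j-1}|-1}{|G|} \leq 1$, Theorem~\ref{thm:strong-random-walks} yields
\[
||\nu_n - \pi||_{L^2}^2 \leq \sum_{j=1}^k \frac{|G_{j-1}|-1}{|G|} \prod_{i \in \tilde{I}_j} \tilde{\sigma}_i^2 \leq \sum_{j=1}^k \prod_{i \in I_{H_j}} \sigma_i^2.
\]
Applying the elementary inequality $\sum_j a_j^2 \leq \bigl(\sum_j a_j\bigr)^2$ with $a_j = \prod_{i \in I_{H_j}} \sigma_i \geq 0$ and taking square roots produces the desired bound. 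There is no substantial obstacle: the only content is the normality induction that underlies the chain construction and the correct use of Lemma~\ref{lem:pushforward-facts}(2); everything else is bookkeeping to reconcile the three flavors of index sets and singular values (those of the corollary, those produced by the theorem applied to our chain, and the intermediate pushforwards).
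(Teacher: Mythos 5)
Your proposal matches the paper's proof in essentials: both construct the chain $G \twoheadrightarrow G/H_1 \twoheadrightarrow G/H_1H_2 \twoheadrightarrow \dots \twoheadrightarrow \{e\}$, invoke Lemma~\ref{lem:pushforward-facts}(2) to dominate the pushed-forward singular values by the original ones, feed the chain into Theorem~\ref{thm:strong-random-walks}, and finish with subadditivity of square root (equivalently, $\sum_j a_j^2 \le (\sum_j a_j)^2$). The only differences are expository: you explicitly verify by induction that each $N_j = H_1\cdots H_j$ is normal in $G$ (which the paper takes for granted when writing the quotients), and you make explicit the containment $I_{H_j} \subseteq \tilde I_j$ together with the fact that adding factors $\le 1$ can only decrease the product over the theorem's index set — a step the paper also uses but leaves implicit when it writes a product over $I_{H_j}$ rather than over the theorem's $I_j$. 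These are sound fill-ins, not a new route.
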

\begin{proof}
We have $\bigcup_{j = 1}^k H_j \subseteq H_1 \cdots H_k$, so $H_1\cdots H_k = G$. Let $\tilde{Q}_j\colon G \to G/H_1\cdots H_j$ be the projection. For $i \in I_{H_j}$, let $\sigma_i'$ be the second-largest singular value of $*(\tilde{Q}_{j-1})_*\mu_i$ on $\tilde{Q}_{j-1}(\langle \operatorname{supp} \mu_i\rangle) = \tilde{Q}_{j-1}(H_j)$. By Lemma~\ref{lem:pushforward-facts}(2), we have $\sigma_i' \leq \sigma_i$. Then applying Theorem~\ref{thm:strong-random-walks} to the sequence \[
G \longrightarrow G/H_1 \longrightarrow G/H_1H_2 \longrightarrow \dots \longrightarrow G/H_1\cdots H_k = \{e\}
\]
we get that \[
||\nu_n - \pi||_{L^2}^2 \leq \sum_{j=1}^k \frac{|G/H_1\cdots H_{j-1}| - 1}{|G|} \left(\prod_{i \in I_{H_j}} (\sigma_i')^2 \right) \leq \sum_{j=1}^k \left(\prod_{i \in I_{H_j}} \sigma_i^2 \right).
\]
Then the corollary follows by subadditivity of square root.
\end{proof}
We obtain Theorem~\ref{thm:intro-random-walks} from Corollary~\ref{cor:random-walks} because the image of a normal subgroup under a surjection is normal.

\section{Universality for Random Groups}\label{sect:universality}

The goal of this section is to prove Theorem~\ref{thm:intro-matrix-universality}.

To prove Theorem~\ref{thm:intro-matrix-universality}, we will use the moment method of Wood (see \cite{wood2017sandpile, wood2019matrices}) as follows. Let $a > 0$ be an integer and $A$ the set of isomorphism classes of finite abelian groups with exponent dividing $a$. Let $X_1, X_2, \dots$ be a sequence of random finitely generated abelian groups and $Y$ be a random abelian group with $Y \otimes \Z/a\Z$ finite. If for every $G \in A$ we have \[
\lim_{n\to\infty} \E[\#\operatorname{Sur}(X_n, G)] = \E[\#\operatorname{Sur}(Y, G)] \leq |\wedge^2 G| \tag{$*$}
\]
then for every $H \in A$ we have \[
\lim_{n\to\infty} \PP[X_n \otimes \Z/a\Z \cong H] = \PP[Y \otimes \Z/a\Z \cong H] \tag{$**$}
\]
(this follows from \cite[Theorem 3.1]{wood2019matrices} by taking $Y_n \coloneqq Y \otimes \Z/a\Z$ a finite group). The quantity $\E[\#\operatorname{Sur}(X_n, G)]$ is called the \textit{$G$-moment} of $X_n$.

\begin{remark}\label{rmk:weak-convergence}
We say an abelian group $G$ is \textit{well-behaved} if $G \otimes \Z/a\Z$ is finite for every positive integer $a$. The class of well-behaved abelian groups includes all finitely generated abelian groups. Moreover, $\lambda_u$ is supported on well-behaved abelian groups even for $u = 0$.

We can put a topology on the set of (isomorphism classes of) well-behaved abelian groups\footnote{Note the difference from the published version, which is not quite correct.} given by a basis of open sets of the form \[
U_{a, H} = \{X \text{ well-behaved abelian} \mid X \otimes \Z/a\Z \cong H\}\]
indexed by positive integers $a$ and finite abelian groups $H$ of exponent dividing $a$. The assertion that $(**)$ holds for all choices of $a$ and $H$ is equivalent to the assertion that the distribution of $X_n$ converges weakly to the distribution of $Y$ in this topology. In particular, if $(*)$ holds for all well-behaved abelian groups $G$, then the distribution of $X_n$ converges weakly to the distribution of $Y$. See \cite{liu2018freegroup} for more details on this topology in a slightly different setting.

We explain this in more detail in the proof of Theorem~\ref{thm:intro-matrix-universality} in Section~\ref{sect:weak-convergence} at the end of this paper.
\end{remark}
\details{
This is not obvious. The proof is essentially the same as that of \cite[Theorem 1.1]{liu2018freegroup}. First, we show that every open set in this topology is a disjoint union of the $U_{a, H}$. Note that $U_{a, H}$ is a finite disjoint union of sets of the form $U_{a!, H_i}$, where $H_i \otimes \Z/a\Z \cong H$. Let $U = \bigcup_{i \in I} U_{a_i, H_i}$. Then we have $U = \bigcup_{i \in I} \bigcup_{j \in J_i} U_{a_i!, H_{ij}}$ for index sets $J_i$ and groups $H_{ij}$ with $H_{ij} \otimes \Z/a_i\Z \cong H_i$. Then we have \[
U = \bigcup_{j \in \sqcup J_i} \bigcup_{i \in I} U_{a_i!, H_{ij}}
\]
Note that two sets $U_{a!, H}$ and $U_{b!, K}$ are either disjoint or nested: suppose $a \leq b$. Then $U_{b!, K} \subset U_{a!, H}$ if $K \otimes \Z/a!\Z \cong H$, and they are disjoint otherwise. Thus, $U$ is a disjoint union of basic opens. Since the collection of $a_i$ and finite groups $H_{ij}$ is countable, $U$ is in fact a countable disjoint union $\bigcup_{i=1}^\infty U_i$.

Now suppose $(**)$ holds for all $a, H$. An equivalent condition for weak convergence is that \[
\liminf_{n\to\infty}\PP[X_n \in U] \geq \PP[Y \in U]
\]
for all open sets $U$. Fatou's lemma says that \[
\PP[Y \in U] = \sum_{i=1}^\infty \PP[Y \in U_i] = \sum_{i=1}^\infty \lim_{n\to\infty} \PP[X_n \in U_i] \leq \liminf_{n\to\infty} \PP[X_n \in U]
\]
as desired.

On the other hand, another equivalent condition for weak convergence is that \[
\limsup_{n\to\infty}\PP[X_n \in U] \leq \PP[Y \in U]
\]

since the sets $U_{a, H}$ are all open and closed, weak convergence implies \[
\lim_{n\to\infty} \PP[X_n \in U] = \PP[Y \in U].
\]
}If $Y \sim \lambda_u$, then \cite[Lemma 3.2]{wood2019matrices} gives\[
\E[\#\operatorname{Sur}(Y, G)] = |G|^{-u}.
\]
Following this strategy, we obtain Theorem~\ref{thm:intro-matrix-universality} as a corollary of Theorem~\ref{thm:moments}, which states that if $X_n$ are the cokernels of $n\times(n+u)$ random matrices satisfying appropriate conditions, then $\lim_n\E[\#\operatorname{Sur}(X_n, G)] = |G|^{-u}$.

When $X_n$ is the cokernel of a random $n\times m$ matrix $M$, the problem of counting surjections from $X_n$ into $G$ can be attacked with combinatorics. Say $X_n = \Z^n/\Lambda$, where $\Lambda$ is a random subgroup of $\Z^n$ (e.g., the column space of a random integer matrix). Then surjections $X_n \to G$ correspond one-to-one with surjections $\Z^n \to G$ which vanish on $\Lambda$. It follows from linearity of expectation that \[
\E[\#\operatorname{Sur}(\Z^n/\Lambda, G)] = \sum_{f \in \operatorname{Sur}(\Z^n, G)} \PP[f(\Lambda) = 0].
\]
\details{
\[
\E[\#\operatorname{Sur}(\Z^n/\Lambda, G)] = \E\left[\sum_{f \in \operatorname{Sur}(\Z^n, G)} 1_{f(\Lambda) = 0}\right] = \sum_{f \in \operatorname{Sur}(\Z^n, G)}\E[1_{f(\Lambda) = 0}] =  \sum_{f \in \operatorname{Sur}(\Z^n, G)} \PP[f(\Lambda) = 0].
\]
}In the case of cokernels of random matrices, $\Lambda$ is the subgroup generated by the columns of the random matrix, viewed as random elements of $\Z^n$. But we can also view $M$ as a random element of $(\Z^n)^m$. Given a map $f\colon \Z^n \to G$, we get by abuse of notation a map $f\colon (\Z^n)^m \to G^m$ applying $f$ to each component. Then we have that $f(\Lambda) = 0$ if and only if $f(M) = 0$. Thus, we want to bound the probabilities $f(M) = 0$. Past work on random matrices with independent entries (e.g., \cite{nguyenRandomIntegralMatrices2022}) has observed that if $Z$ is a random tuple in $\Z^n$ with independent, sufficiently regular components, then for most $f \in \operatorname{Sur}(\Z^n, G)$, the element $f(Z) \in G$ is close to uniformly distributed. Applying this to each column independently allows us to compute $\PP[f(M) = 0]$. In this work, we apply the same principle to consider several columns of a random matrix at a time. 

\subsection{Balanced elements}

The following definition captures the idea that a random element in a group is not too concentrated in a particular coset.

\begin{definition}\label{def:balanced}
    Let $G$ be a group. A $G$-valued random variable $X$ is \textit{$\varepsilon$-balanced} if for any proper subgroup $H < G$ and element $g \in G$, we have $\PP[X \in gH] \leq 1 - \varepsilon$. 
\end{definition}
This definition agrees with the definition in \cite{wood2019matrices} when $G$ is a finite cyclic group. Here is an example of an $\varepsilon$-balanced random variable that does not take values in a cyclic group.

\begin{example}
    Let $G$ be a finitely generated group with finite generating set $S$ containing the identity, and let $X$ be a random variable supported on $S$ with $\min_{g \in S} \PP[X = g] = \varepsilon$. Then $X$ is $\varepsilon$-balanced.

    Indeed, suppose $H$ is a subgroup of $G$ and $g \in G$ such that $\PP[X \in gH] > 1 - \varepsilon$. Then $\PP[X \in gH] = 1$, so $S \subset gH$. Since $S$ contains the identity element of $G$, we must have $gH = H$, and since $S \subset gH = H$, we must have $H = G$.
\end{example}

In this paper, we consider $n \times m$ integer matrices as elements of the abelian group $(\Z^n)^m$. For each subset $S$ of $[n] \times [m]$, we have a quotient map $\pi_S$ from $(\Z^n)^m$ onto $\Z^S$ given by taking the entries of a matrix indexed by pairs in $S$. We say that a subset of the entries of a random matrix $M$ with indices $S$ is \textit{jointly $\varepsilon$-balanced} if $\pi_S(M)$ is $\varepsilon$-balanced in $\Z^S$.

The new definition of $\varepsilon$-balanced has some desirable properties that help construct new examples of $\varepsilon$-balanced random variables.

\begin{lemma}\label{lem:balanced-facts}
    \begin{enumerate}[label=(\arabic*)]
        \item If $\pi\colon G \twoheadrightarrow Q$ is a surjective homomorphism of groups and $X$ is $\varepsilon$-balanced in $G$, then $\pi(X)$ is $\varepsilon$-balanced in $Q$. 
        \item Let $G, G'$ be groups, $X$ be $\varepsilon$-balanced in $G$, and $Y$ be $\varepsilon$-balanced in $G'$. If $X$ and $Y$ are independent, then $(X, Y)$ is $\varepsilon$-balanced in $G\times G'$.
    \end{enumerate}
\end{lemma}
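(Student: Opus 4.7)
The plan is to prove the two parts separately: part (1) is a direct unpacking of the definition, while part (2) requires a case analysis via the two coordinate projections.

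For part (1), I would take a proper subgroup $H' < Q$ and coset $qH' \subseteq Q$, and set $H = \pi^{-1}(H')$. Since $\pi$ is surjective, $\pi(H) = H'$, so if $H$ were all of $G$ we would have $H' = Q$, contradicting properness; hence $H < G$ is proper. Picking any $g \in \pi^{-1}(q)$, one checks directly that $\pi(X) \in qH'$ if and only if $X \in gH$ (both directions use only that $\pi(g) = q$ and $\ker \pi \subseteq H$), so the $\varepsilon$-balance of $X$ on the left coset $gH$ gives the bound.

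For part (2), let $K \leq G \times G'$ be a proper subgroup and $(g_0, g_0') \in G \times G'$, and let $K_1 = p_1(K) \leq G$ and $K_2 = p_2(K) \leq G'$ be the images under the two coordinate projections. If $K_1 < G$, then the event $(X, Y) \in (g_0, g_0')K$ is contained in the event $X \in g_0 K_1$, which has probability at most $1 - \varepsilon$ by the $\varepsilon$-balance of $X$; symmetrically if $K_2 < G'$. The remaining case is $K_1 = G$ and $K_2 = G'$. Here I would condition on $X$: for each $x \in G$, consider the fiber $K_x = \{y \in G' : (x, y) \in (g_0, g_0')K\}$. A short calculation using $(x, y_0)^{-1}(x, y_1) = (e, y_0^{-1}y_1)$ shows that if $K_x$ is nonempty then it is a left coset of the vertical subgroup $N_2 = \{y \in G' : (e, y) \in K\}$. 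If $N_2 \neq G'$, the $\varepsilon$-balance of $Y$ gives $\PP[Y \in K_x] \leq 1 - \varepsilon$ for every $x$, and independence combined with the tower property yields the conclusion. If instead $N_2 = G'$, then $\{e\} \times G' \subseteq K$, which combined with $K_1 = G$ forces $K = G \times G'$, contradicting properness.

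The main obstacle is the final case in part (2), where both projections are surjective: here $K$ need not be a product subgroup, so neither $\varepsilon$-balance hypothesis alone suffices to bound $\PP[(X, Y) \in (g_0, g_0')K]$. The conditioning argument above resolves this by exhibiting each slice of $K$ above $x \in G$ as a left coset of the single proper subgroup $N_2 \leq G'$. Some care is needed since $G$ and $G'$ may be non-abelian, but the coset identification uses only the group axioms and does not require normality of $N_2$.
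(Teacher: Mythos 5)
Your part (1) argument is the same as the paper's. Part (2) is also correct, but your case decomposition is genuinely different from the paper's, and the comparison is worth noting. The paper fixes a proper coset $kH$, conditions on $Y$ from the outset, and observes that the relevant slice $(e,Y^{-1})kH \cap (G\times\{e\})$ is either empty or a coset of $H\cap(G\times\{e\})$; it then splits into two cases according to whether $G\times\{e\}\subseteq H$. In the second case it must actually show $H = G \times H'$ for a proper subgroup $H' \leq G'$. You instead split on the coordinate projections $K_1 = p_1(K)$, $K_2 = p_2(K)$: when one of these is proper you get the bound immediately by projecting (no conditioning needed), and only when both projections are surjective do you condition on $X$ and examine the vertical fiber $N_2 = \{y : (e,y)\in K\}$. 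Having already dispatched the $K_2 < G'$ case, you can rule out $N_2 = G'$ by contradiction rather than by identifying $K$ as a product. The trade-off: your argument has three cases instead of two, but two of them are one-liners needing no conditioning; the paper's has fewer cases but more work inside the nontrivial one. Both are clean and correct; there is no gap in your proof. One small point worth making explicit when you write it up: in the subcase $K_x$ a coset of $N_2 < G'$ you use independence to replace $\PP[Y \in K_x \mid X = x]$ by $\PP[Y \in K_x]$ before applying the $\varepsilon$-balance of $Y$, and you should also note that $\PP[Y \in K_x \mid X = x] = 0 \leq 1 - \varepsilon$ trivially when $K_x = \varnothing$, so the tower-property average is bounded by $1 - \varepsilon$ uniformly.
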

\begin{proof}
\begin{enumerate}[label=(\arabic*)]
    \item Let $qK \subsetneq Q$ be a coset of a proper subgroup of $Q$. Let $\tilde{q} \in \pi^{-1}(q)$, so $\pi^{-1}(qK) = \tilde{q}\pi^{-1}(K)$ is a coset of a proper subgroup of $G$. \details{We have $\pi^{-1}(qK) = \bigcup_{g\in \pi^{-1}(q)}g\pi^{-1}(K)$. Since $\pi$ is surjective, $\pi^{-1}(K)$ is a proper subgroup of $G$, or else $K = \pi(\pi^{-1}(K)) = \pi(G) = Q$. Now for any $g, g' \in \pi^{-1}(q)$, we have $\pi(g) = \pi(g')$, so $gg^{-1} \in \ker \pi \subset \pi^{-1}(K)$ and $g\pi^{-1}(K) = g'\pi^{-1}(K)$. Hence, if $g \in \pi^{-1}(q)$, we have $\pi^{-1}(qK) = g\pi^{-1}(K)$, and this is a coset of a proper subgroup of $G$.}Since $X$ is $\varepsilon$-balanced, \[
    \PP[\pi(X) \in qK] \leq \PP[X \in \tilde{q}\pi^{-1}(K)] \leq 1 - \varepsilon,
    \]
    as desired.
    
    \item Let $kH$ be a coset of a proper subgroup of $G \times G'$. Note that \[
    \PP[(X, Y) \in kH] = \PP[(X, e) \in (e, Y^{-1})kH] = \PP[(X, e) \in (e, Y^{-1})kH \cap (G \times \{e\})].
    \]
    Recall that the intersection of two cosets in a group is either empty or a coset of their intersection. In particular, $(e, Y^{-1})kH \cap (G \times \{e\})$ is either empty or a coset of a subgroup of $G \times \{e\}$.
    
    There are two cases, depending on whether $(e, Y^{-1})kH \cap (G \times \{e\})$ is always a proper subset of $G \times \{e\}$: \begin{enumerate}[label=\roman*.]
        \item If $(e, y^{-1})kH \cap (G \times \{e\}) \subsetneq G \times \{e\}$ for all $y \in G'$:

        Condition on $Y = y$ for some fixed $y \in G'$. Since $X$ and $Y$ are independent, and $X$ is $\varepsilon$-balanced, \[
        \PP[(X, e) \in (e, Y^{-1})kH \cap (G \times \{e\}) \mid Y = y] = \PP[(X, e) \in (e, y^{-1})kH \cap (G \times \{e\})] \leq 1- \varepsilon.
        \]
        \details{Since $(e, y^{-1})kH \cap (G \times \{e\}) \subsetneq G \times \{e\}$, either $(e, y^{-1})kH \cap (G \times \{e\}) = \varnothing$ or $(e, y^{-1})kH \cap (G \times \{e\})$ is a coset of a proper subgroup of $G \times \{e\}$. In the former case, $\PP[(X, e) \in (e, y^{-1})kH \cap (G \times \{e\})] = 0$. In the latter case, notice that $(X, e)$ is $\varepsilon$-balanced in $G \times \{e\}$ by (1). Hence $\PP[(X, e) \in (e, y^{-1})kH \cap (G \times \{e\})] \leq 1 - \varepsilon$. 

        In both cases, $\PP[(X, y) \in kH] = \PP[(X, e) \in (e, y^{-1})kH \cap (G \times \{e\})] \leq 1 - \varepsilon$. Hence, we have \begin{align*}
        \PP[(X, Y) \in kH] &= \sum_{y\in G'}\PP[(X, e) \in (e, Y^{-1})kH \cap (G \times \{e\}) \mid Y = y] \PP[Y = y] \\
        &\leq (1 - \varepsilon)\sum_{y\in G'}\PP[Y = y] \\
        &= 1 - \varepsilon.
        \end{align*}
        }

        \item If $G \times \{e\} \subseteq (e, y^{-1})kH$ for some $y \in Y$, then $(e, e) \in (e, y^{-1})kH$, so in particular $(e, y^{-1})kH$ is a subgroup of $G \times G'$ and we must have $(e, y^{-1})kH = H$. We claim that $H = G \times H'$ for some proper subgroup $H'$ of $G'$.

        Indeed, let $\pi\colon G\times G' \to G'$ be the projection and let $H' = \pi(H)$. On one hand, clearly $H \subseteq G \times \pi(H)$. On the other, if $(g, h') \in G \times H'$, then $h' = \pi(g', h)$ for some $(g', h) \in H$. Then $(g, h') = (g(g')^{-1}, e)(g', h)$. Since $(g(g')^{-1}, e) \in G \times \{e\} \subseteq H$, we have $(g, h') \in H$. Hence $H = G \times H'$. Note that $H' \lneq G'$, or else $H = G \times G'$ is not a proper subgroup.

        Then \[
        \PP[(X, Y) \in kH] = \PP[Y \in H'] \leq 1 - \varepsilon.
        \]
    \end{enumerate}
    Hence, in both cases we have $\PP[(X, Y) \in kH] \leq 1 - \varepsilon$ and since this holds for every proper coset $kH$, we have that $(X, Y)$ is balanced.
\end{enumerate}  
\end{proof}
Note that Lemma~\ref{lem:balanced-facts} gives us a nice way to build up $\varepsilon$-balanced matrices. If the entries of a random matrix can be partitioned into independent subsets and each of these subsets of the entries is jointly $\varepsilon$-balanced, then the whole matrix is $\varepsilon$-balanced. For example, any matrix with independent, $\varepsilon$-balanced entries (as in \cite{wood2019matrices}) is $\varepsilon$-balanced as a matrix. 

When a random variable is $\varepsilon$-balanced, we can get an upper bound on the associated singular value.
\begin{lemma}\label{lem:balanced-singular-values}
    Suppose $G$ is a finite group and $X$ is $\varepsilon$-balanced in $G$ with distribution $\mu$. Let $\sigma$ be the second largest singular value of the operator $* \mu$ on $L^2(G)$. Then \[
    \sigma \leq \exp\left(-\frac{\varepsilon}{2|G|^3}\right).
    \]
\end{lemma}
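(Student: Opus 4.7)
The plan is to prove the slightly stronger inequality $\sigma^2 \leq 1 - \varepsilon/|G|^3$; the claim will then follow from the elementary estimate $\sqrt{1-t} \leq e^{-t/2}$, valid for $t \in [0,1]$. By the argument in the proof of Lemma~\ref{lem:convolution-contraction}, $\sigma^2$ is the second-largest eigenvalue of the self-adjoint operator $*(\mu * \check{\mu})$ on $L^2(G)$. I will use Peter--Weyl to decompose this operator block-diagonally according to the isotypic decomposition of the regular representation: on the isotype of a nontrivial irreducible representation $\rho$, the operator acts (up to multiplicity) as $\hat\mu(\rho)^{\ast} \hat\mu(\rho)$, where $\hat\mu(\rho) := \sum_g \mu(g)\rho(g)$. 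Thus $\sigma^2 = \max_{\rho \text{ nontriv.}} \|\hat\mu(\rho)\|_{\mathrm{op}}^2$, and the problem reduces to showing $\|\hat\mu(\rho)\|_{\mathrm{op}}^2 \leq 1 - \varepsilon/|G|^3$ for every nontrivial irreducible $\rho$.

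\textbf{One-dimensional case.} When $\dim V_\rho = 1$, so $\rho = \chi$ is a character, I will use that $\chi$ factors through the cyclic quotient $C := G/\ker \chi$ of order $m \leq |G|$; by Lemma~\ref{lem:balanced-facts}(1) the pushforward $\bar\mu$ of $\mu$ to $C$ is $\varepsilon$-balanced. Writing $p_i := \bar\mu(\{i\})$ and $z_i := \bar\chi(i)$ for the distinct $m$-th roots of unity, a short computation gives
\[
1 - |\hat{\bar\mu}(\bar\chi)|^2 \;=\; \tfrac{1}{2}\sum_{i, j} p_i p_j |z_i - z_j|^2.
\]
Applying $\varepsilon$-balanced to the trivial subgroup of $C$ yields $\max_i p_i \leq 1 - \varepsilon$; picking $i_0$ achieving the max gives $p_{i_0} \geq 1/m$ and $\sum_{j \neq i_0} p_j \geq \varepsilon$. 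Keeping only the pairs $(i_0, j)$ in the sum and using $\min_{j \neq i_0}|z_{i_0} - z_j| \geq 4/m$, this is at least $16\varepsilon/m^3 \geq \varepsilon/|G|^3$.

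\textbf{Higher-dimensional case and main obstacle.} For $\dim V_\rho > 1$, I would take a unit vector $v$ achieving $\|\hat\mu(\rho)\|_{\mathrm{op}}$ and use the identity
\[
1 - \|\hat\mu(\rho)v\|^2 \;=\; \tfrac{1}{2}\sum_{g, h} \mu(g)\mu(h)\|\rho(g)v - \rho(h)v\|^2.
\]
The line stabilizer $H_v := \{k \in G : \rho(k)v \in \C v\}$ is a proper subgroup of $G$ by irreducibility of $\rho$. The plan is to apply $\varepsilon$-balanced to the cosets of $H_v$ to bound the total $\mu \otimes \mu$-mass of pairs $(g, h)$ with $g^{-1}h \notin H_v$ from below, and to combine this with a quantitative lower bound on $\|\rho(k)v - \lambda v\|$ for $k \notin H_v$. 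The hard part is that $H_v$ need not be normal in $G$, so the left-coset formulation of $\varepsilon$-balanced must be invoked carefully. A cleaner alternative that obtains the $|G|^3$ constant directly is to work with the Dirichlet form $\tfrac12 \sum_z (\mu * \check\mu)(z)\|D_zf\|_{L^2}^2$ on $L^2(G)_0$, combining three factors of $|G|$ (one from the diameter of the Cayley graph on $\operatorname{supp}(\mu)\operatorname{supp}(\mu)^{-1}$, one from $\|f\|_\infty \geq 1/\sqrt{|G|}$ for mean-zero unit-norm $f$, and one from a pigeonhole argument on cosets) with the single $\varepsilon$ coming from the balanced hypothesis.
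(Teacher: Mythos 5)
Your overall plan (bound $\sigma^2 \leq 1 - \varepsilon/|G|^3$ and finish via $\sqrt{1-t} \leq e^{-t/2}$) is the same as the paper's, and your one-dimensional/character calculation is correct and a genuinely different route: the paper instead cites Saloff-Coste's spectral-gap estimate $\sigma^2 \leq 1 - m/D^2$ for the symmetrized walk $*(\mu*\check\mu)$ and builds a generating set $\Sigma = \{x_1, x_2, \dots\}$ iteratively by always choosing the point of largest $(\mu*\check\mu)$-mass outside $\langle x_1,\dots,x_{n-1}\rangle$; $\varepsilon$-balancedness of $\mu*\check\mu$ (which the paper checks directly) gives $\nu(x_n) \geq \varepsilon/|G|$ at every step and $D \leq |G|$ does the rest.

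However, the higher-dimensional case has a genuine gap, which you acknowledge but misdiagnose. Normality of $H_v$ is not actually the problem: your Definition~\ref{def:balanced} allows left cosets of arbitrary proper subgroups, and conditioning on one factor gives $\PP[X^{-1}Y \in gH_v] = \sum_x \mu(x)\PP[Y \in xgH_v] \leq 1-\varepsilon$ directly, since each $xgH_v$ is a left coset of the proper subgroup $H_v$. The real obstruction is the ``quantitative lower bound on $\|\rho(k)v - \lambda v\|$ for $k \notin H_v$'' that you mention in passing. No such uniform bound exists in terms of $|G|$ alone: for $\dim \rho \geq 2$ one can choose the unit vector $v$ so that $\rho(k_0)v$ is as close as you like to $\C v$ while still $k_0 \notin H_v$, so the contribution of the pair $(e,k_0)$ to the sum $\tfrac12\sum_{g,h}\mu(g)\mu(h)\|\rho(g)v-\rho(h)v\|^2$ can be made arbitrarily small even when $g^{-1}h$ escapes the stabilizer. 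To fix this one should not argue coset-by-coset but rather use a path argument: build the generating set $\Sigma$ as in the paper, use that $\max_{g\in G}\|v - \rho(g)v\| \geq 1$ (because $\sum_g \rho(g)v = 0$ for nontrivial irreducible $\rho$) together with a telescoping bound along geodesics of length $\leq D$ to deduce $\max_{x\in\Sigma}\|v - \rho(x)v\| \geq 1/D$, and then apply the lower bound $\nu(x) \geq \varepsilon/|G|$ for that $x$. This is functionally identical to the Dirichlet-form route you sketch in your last sentence, which is itself essentially the paper's argument unrolled; but as written that sketch omits the iterative construction of $\Sigma$, which is the step where the $\varepsilon$-balanced hypothesis is actually deployed.
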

\begin{proof}
    Note that $\sigma$ is the square root of the second largest eigenvalue of the operator $*\nu\coloneqq * \mu * \check{\mu}\colon L^2(G) \to L^2(G)$, where $*\check{\mu}$ is the adjoint to the operator $*\mu$, given by $\check{\mu}(g) = \mu(g^{-1})$. The operator $*\nu$ is the transition operator for a random walk on $G$, where each step is a difference of two independent copies of $X$. 

    In particular, note that $\nu = \check{\nu}$. \details{
    If $g \in G$, then \[
    \nu(g) = \sum_{h \in G} \mu(h)\mu(g^{-1}h) = \sum_{h \in G} \mu(gh)\mu(h) = \nu(g^{-1}).
    \]
    }For any generating set $\Sigma$ of $G$, \cite[Theorem 6.2]{saloff-coste2004book} applied to $\Sigma \cup \Sigma^{-1}$ shows that the second-largest eigenvalue $\sigma^2$ of $*\nu$ is bounded above by \[
    \sigma^2 \leq 1 - \frac{m}{D^2},
    \]
    where $m = \min_{x\in \Sigma}\nu(x)$ and $D$ is the diameter of the Cayley graph of $(G, \Sigma)$. In particular, $D \leq |G|$. \details{
    As stated, \cite[Theorem 6.2]{saloff-coste2004book} applies to \textit{symmetric} generating sets $\Sigma$. However, if $\Sigma$ is any generating set, we can let $\Sigma' = \Sigma \cup \Sigma^{-1}$. The diameter of the resulting Cayley graph is at most $D$ and the minimal mass of any element of $\Sigma'$ is the same as that of $\Sigma$. Thus, we still get the inequality above.
    }

    The goal is to choose an appropriate $\Sigma$ to bound $m$ from below. Note that if $X_1$ and $X_2$ are $\varepsilon$-balanced and independent, then so is $X_1X_2^{-1}$ (via conditioning on $X_2$). In particular, $\nu$ is $\varepsilon$-balanced.

    We proceed iteratively. Having chosen $x_1, \dots, x_{n-1}$ (including the empty set $n = 1$), if $\langle x_1, \dots, x_{n-1} \rangle = G$ then we are done. Otherwise, since $\nu$ is $\varepsilon$-balanced, $\nu(\langle x_1, \dots, x_{n-1} \rangle) \leq 1 - \varepsilon$. Choose \[
    x_n = \operatorname{argmax}_{x\in G\setminus \langle x_1, \dots, x_{n-1} \rangle} \nu(x).
    \] 
    Since $\nu(\langle x_1, \dots, x_{n-1} \rangle) \leq 1 - \varepsilon$, we have $\nu(G\setminus \langle x_1, \dots, x_{n-1} \rangle) \geq \varepsilon$, so $\nu(x_n) \geq \frac{\varepsilon}{|G\setminus \langle x_1, \dots, x_{n-1} \rangle|} \geq \frac{\varepsilon}{|G|}$.

    Hence we have $m \geq \frac{\varepsilon}{|G|}$, so \[
    \sigma \leq \sqrt{1 - \frac{\varepsilon}{|G|^3}} \leq 1 - \frac{\varepsilon}{2|G|^3} \leq \exp\left(-\frac{\varepsilon}{2|G|^3}\right),
    \]
    as desired.
\end{proof}

With some more work, we can get a better bound on the singular values when $G$ is abelian using an argument based on \cite[Lemma 2.1, Lemma 2.2]{wood2017sandpile}:

\begin{lemma}\label{lem:balanced-singular-values-abelian}
    Suppose $G$ is a finite abelian group of exponent dividing $a$ and $X$ is $\varepsilon$-balanced in $G$ with distribution $\mu$. Let $\sigma$ be the second largest singular value of the operator $*\mu$ on $L^2(G)$. Then \[
    \sigma \leq \exp\left(-\frac{\varepsilon}{a^2}\right).
    \]
\end{lemma}
\begin{proof}
As in the proof of Lemma~\ref{lem:balanced-singular-values}, we consider the operator $*\nu\coloneqq *\mu*\check{\mu}$. The second-largest eigenvalue $\sigma^2$ of $*\nu$ is equal to the spectral radius of the restriction $(*\nu)|_{L^2(G)_0}$, where $L^2(G)_0$ is the space of signed measures on $G$ of total mass $0$, as in Section~\ref{sect:walks}.

The spectral radius of $(*\nu)|_{L^2(G)_0}$ is bounded above by the operator norm $||(*\nu|_{L^2(G)_0})^n||^{1/n}$ for any natural number $n$, i.e., \[
\sigma^2 \leq ||(*\nu|_{L^2(G)_0})^n||^{1/n}.
\] 
We will compute this upper bound.

Let $\lambda \in L^2(G)_0$ with $||\lambda||_{L^2(G)_0} = 1$, and let $Y \sim \lambda + \pi$ be a $G$-valued random variable, where $\pi$ is the uniform distribution on $G$. Let $X_1, \dots, X_n, \check{X}_1, \dots, \check{X}_n$ be independent, identically distributed $G$-valued random variables drawn from $\mu$. Then $Y + \sum_{i=1}^nX_i - \sum_{i=1}^n \check{X}_i$ is distributed according to $\lambda*(\nu)^{*n} + \pi$, so we have \[
||\lambda*(\nu)^{*n}||_{L^2(G)_0}^2 = \sum_{g \in G} \left|\PP\left[Y + \sum_{i=1}^nX_i - \sum_{i=1}^n \check{X}_i = g\right] - \frac{1}{|G|}\right|^2.
\]

Let $G^* = \operatorname{Hom}(G, S^1)$ be the Pontryagin dual of $G$.

By the discrete Fourier transform, we have \begin{align*}
    \left|\PP\left[Y + \sum_{i=1}^nX_i - \sum_{i=1}^n \check{X}_i = g\right] - \frac{1}{|G|}\right| &= \left|\frac{1}{|G|}\sum_{\chi \in G^* \setminus \{0\}}\E\left[\chi\left(Y + \sum_{i=1}^nX_i - \sum_{i=1}^n \check{X}_i - g\right)\right]\right| \\
    &\leq \frac{1}{|G|}\sum_{\chi \in G^* \setminus \{0\}}\left|\E\left[\chi(Y - g)\right]\right|\prod_{i=1}^n \left|\E\left[\chi(X_i)\right]\E\left[-\chi(\check{X}_i)\right] \right|
\end{align*}

We examine the terms $\E\left[\chi(X_i)\right]$ and $\E\left[-\chi(\check{X}_i)\right]$ for $\chi$ a character of $G$. By Lemma~\ref{lem:balanced-facts}(a), the random variable $\chi(X_i)$ is $\varepsilon$-balanced in the (necessarily cyclic) subgroup $\chi(G) \subset S^1$. By \cite[Lemma 2.2]{wood2017sandpile}, since $\chi(X_i)$ is a $\varepsilon$-balanced random element of a nontrivial subgroup of the $a$th roots of unity, we have $|\E[\chi(X_i)]| \leq \exp\left(-\frac{\varepsilon}{a^2}\right)$. We have the same bound for $|\E\left[-\chi(\check{X}_i)\right]|$.

So, \begin{align*}
||\lambda*(\nu)^{*n}||_{L^2(G)_0}^2 &\leq \sum_{g \in G}\left(\frac{1}{|G|}\sum_{\chi \in G^*\setminus\{0\}}\left|\E\left[\chi(Y - g)\right]\right|\prod_{i=1}^n \left|\E\left[\chi(X_i)\right]\E\left[-\chi(\check{X}_i)\right] \right| \right)^2 \\
&\leq |G|\exp\left(-\frac{4n\varepsilon}{a^2}\right)
\end{align*}
which means \[
||(*\nu|_{L^2(G)_0})^n||^{1/n} \leq |G|^{1/2n}\exp\left(-\frac{2\varepsilon}{a^2}\right)
\]
and, taking the limit as $n\to\infty$, we obtain \[
\sigma \leq \exp\left(-\frac{\varepsilon}{a^2}\right)
\]
as we wanted. 
\end{proof}

Now we will use the $\varepsilon$-balanced condition to give a related balancedness condition for matrices that contains information about how balanced and independent the entries are. 

\begin{definition}
    Let $S$ be a finite set. A \textit{partition} of $S$ is a collection $\mathcal{P} = \{P_1, \dots, P_k\} \subseteq 2^S$, such that $S = P_1 \sqcup P_2 \sqcup \dots \sqcup P_k$ and each $P_i$ is nonempty. We say $|\mathcal{P}| = \max_i \#P_i$ and $\#\mathcal{P} = k$.  If $\sigma\subseteq 2^{S}$, write $\cup\sigma$ for $\bigcup_{S\in\sigma} S$.
\end{definition}
Note that $\#\mathcal{P}\cdot |\mathcal{P}| \geq \#S$.

The next definition specifies the kinds of restrictions we will give for the matrices in our universality class. The idea is that we can split up the columns of the matrix and then the rows, so that the resulting sections of the matrix are $\varepsilon$-balanced.

If $M$ is an $n\times m$ matrix, $S = \{s_1 < \dots < s_k\} \subset [n]$, and $T = \{t_1 < \dots < t_\ell\} \subset [m]$, then $M_{S, T}$ is the $k\times \ell$ matrix $(M_{s_i, t_j})_{1 \leq i \leq k, 1 \leq j \leq \ell}$.  
\begin{definition}\label{def:w-h-e-balanced}
    An $n\times m$ random matrix $M$ with entries in a ring $R$ is $(w, h, \varepsilon)$-balanced if there is a partition $\mathcal{Q} = \{Q_1, \dots, Q_r\}$ of $[m]$ and a partition $\mathcal{P} = \{P_1, \dots, P_\ell\}$ of $[n]$ with $|\mathcal{Q}| \leq w$, $|\mathcal{P}| \leq h$, and such that each random matrix $M_{P_i, Q_j}$ is $\varepsilon$-balanced in the additive abelian group $(R^{\#P_i})^{\#Q_j}$ and the random matrices $M_{P_i, Q_j}$ are independent.
\end{definition}
If $|\mathcal{P}| = |\mathcal{Q}| = 1$ then we recover the definition of $\varepsilon$-balanced from \cite{wood2019matrices} and other related work. 

Now we are ready to state the main theorem of this section:
\begin{theorem}\label{thm:matrix-universality}
    Let $u \geq 0$ be an integer. Let $(w_n)_n, (h_n)_n$, $(\varepsilon_n)_n$ be sequences of real numbers such that $w_n = O(n^{\alpha_1})$, $h_n = O(n^{\alpha_2})$ and $\varepsilon_n = \Omega( n^{-\beta})$ for some $0 \leq \alpha_1, \alpha_2, \beta < 1$ satisfying \[2\alpha_1 + \alpha_2 < 1 - 2\beta.
    \]
    For each integer $n \geq 0$, let $M_n$ be an $(w_n, h_n, \varepsilon_n)$-balanced $n \times (n + u)$ random matrix with entries in $\Z$. Let $Y \sim \lambda_u$. Then for all positive integers $a$ and abelian groups $H$ of exponent dividing $a$ we have \[
    \lim_{n\to\infty} \PP[\operatorname{coker}(M_n) \otimes \Z/a\Z \cong H] = \PP[Y \otimes \Z/a\Z \cong H] = \lambda_u(U_{a, H}).
    \]
\end{theorem}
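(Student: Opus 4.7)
The proof follows the moment method set up at the start of Section~\ref{sect:universality}: since $\E[\#\operatorname{Sur}(Y, G)] = |G|^{-u} \leq |\wedge^2 G|$ for $Y \sim \lambda_u$, it suffices to prove $\lim_n \E[\#\operatorname{Sur}(\operatorname{coker}(M_n), G)] = |G|^{-u}$ for every finite abelian group $G$ with exponent dividing $a$. I expand as $\sum_{f \in \operatorname{Sur}(\Z^n, G)} \PP[f(M_n) = 0]$ (with $f$ applied component-wise to $M_n \in (\Z^n)^{n+u}$) and use $\#\operatorname{Sur}(\Z^n, G) = (1+o(1))|G|^n$; it will suffice to show $\PP[f(M_n) = 0] = (1+o(1))|G|^{-(n+u)}$ uniformly on a dominant family of surjections. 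Independence of the blocks $M_{P_i, Q_j}$ across $j$ gives $\PP[f(M_n) = 0] = \prod_j \PP[f(M_{\cdot, Q_j}) = 0]$, reducing to a single column block.

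Fix a column block $Q_j$ of width $w_j \leq w_n$. The element $f(M_{\cdot, Q_j}) \in G^{w_j}$ admits an independent decomposition $f(M_{\cdot, Q_j}) = \sum_{k} X_k$ over row blocks, where $X_k$ results from applying $f|_{P_k}\colon \Z^{P_k} \twoheadrightarrow H_k$ column-wise to $M_{P_k, Q_j}$, with $H_k = \langle f(e_i) : i \in P_k\rangle$. Applying Lemma~\ref{lem:balanced-facts}(1) to the surjection $(\Z^{P_k})^{w_j} \twoheadrightarrow H_k^{w_j}$, the law $\mu_k$ of $X_k$ is $\varepsilon_n$-balanced in $H_k^{w_j}$, so $\langle \operatorname{supp} \mu_k\rangle = H_k^{w_j}$, and by Lemma~\ref{lem:balanced-singular-values} its second-largest singular value is at most $\exp(-\varepsilon_n/(2|H_k|^{3w_j})) \leq \exp(-n^{-\beta - o(1)}/2)$, using $w_n = o(\log n)$ and $|H_k| \leq |G|$. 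Since $G^{w_j}$ is abelian and $\sum_k H_k^{w_j} = G^{w_j}$, Corollary~\ref{cor:random-walks} yields $\|\mu_{f, j} - \pi\|_{L^2} \leq \sum_{H \in \mathcal{H}'} \prod_{k : H_k = H} \sigma_k$ for any family $\mathcal{H}'$ of subgroups of $G$ satisfying $\sum_{H \in \mathcal{H}'} H = G$.

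Call $f$ \emph{robust} if some such $\mathcal{H}'$ exists with $\#\{k : H_k = H\} \geq n^{(\alpha+\beta)/2}$ for every $H \in \mathcal{H}'$; this threshold forces each product to be at most $\exp(-n^{(\alpha-\beta)/2-o(1)}/2)$, which decays faster than $|G|^{-w_n}$ since $\beta < \alpha/2$. For robust $f$, this yields $\PP[f(M_n) = 0] = (1+o(1))|G|^{-(n+u)}$. The main technical obstacle is bounding the non-robust contribution. Since $\#\mathcal{P} \geq n/h_n \geq n^\alpha / C$ and there are only finitely many proper subgroups of $G$, I expect a union bound over $H < G$ combined with a large-deviation estimate over the independent families $\{f(e_i) : i \in P_k\}$ to show the count of non-robust $f \in \operatorname{Sur}(\Z^n, G)$ is superpolynomially smaller than $|G|^n$; paired with a crude per-$f$ bound $\PP[f(M_n) = 0] \leq n^{O(1)} |G|^{-(n+u)}$ (obtained by applying Lemma~\ref{lem:balanced-singular-values} block-by-block, or by restricting to a maximal well-populated sub-collection of blocks), this should render the non-robust contribution $o(|G|^{-u})$. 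Coordinating the threshold $n^{(\alpha+\beta)/2}$ so that both decay estimates succeed simultaneously is where the hypothesis $\beta < \alpha/2$ enters essentially.
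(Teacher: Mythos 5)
Your positive half of the argument — the moment reduction, passing to independent column blocks, interpreting $f(M_{\cdot,Q_j})$ as a random walk on $G^{\#Q_j}$ driven by the row blocks, using Lemma~\ref{lem:balanced-facts}(1) together with Lemma~\ref{lem:balanced-singular-values}, and invoking Corollary~\ref{cor:random-walks} for ``robust'' $f$ — matches the paper's Lemmas~\ref{lem:partition-equidistribution} and~\ref{lem:full-matrix-partition-equidistribution} in spirit; your ``robust'' condition is essentially the paper's $\mathcal{P}$-code condition. (Do pass to $V=(\Z/a\Z)^n$ first so that $\operatorname{Sur}(V,G)$ is finite.) The genuine gap is in the non-robust case: your crude bound $\PP[f(M_n)=0]\leq n^{O(1)}|G|^{-(n+u)}$ is false. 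If $f$ restricted to all but a few row blocks lands in a proper subgroup $K<G$ of index $D$, then each $f(M_{\cdot,Q_j})$ lies in $K^{\#Q_j}$ with probability bounded below by a constant and is close to uniform there, so $\PP[f(M_{\cdot,Q_j})=0]\asymp D^{\#Q_j}|G|^{-\#Q_j}$; multiplying over column blocks gives $\PP[f(M_n)=0]$ on the order of $D^{n+u}|G|^{-(n+u)}$, exponentially rather than polynomially larger than $|G|^{-(n+u)}$. Restricting to a maximal well-populated sub-collection of row blocks does not rescue this: those blocks only generate $K$, so you get a sharp estimate for a walk confined to $K^{\#Q_j}$, not one on $G^{\#Q_j}$.

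Consequently, showing that non-robust $f$ are superpolynomially fewer than $|G|^n$ cannot close the argument; the count and the per-$f$ probability must be traded off against each other depth by depth. The paper does this by stratifying non-codes according to their $(\mathcal{P},\delta)$-depth $D=[G:f(V_{\setminus\cup\sigma})]$ (Definition~\ref{def:depth}). Lemma~\ref{lem:count-depth} shows there are roughly $|G|^nD^{-n(1-o(1))}$ surjections of depth $D$, while Lemma~\ref{lem:full-partition-depth} shows each satisfies $\PP[f(M)=0]\lesssim\exp(-\varepsilon_n n/\log n)\,D^n|G|^{-n}$; the $D^{\pm n}$ factors cancel, and the residual $\exp(-\varepsilon_n n/\log n)$ makes the depth-$D$ contribution negligible. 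Choosing the code distance $\delta_n=n^{-\alpha/2}$ so that both estimates hold simultaneously is where $\beta<\alpha/2$ enters, much as you anticipated. Replacing your large-deviation/union-bound sketch with this depth stratification is the missing piece of your proposal.
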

Together with Remark~\ref{rmk:weak-convergence}, this gives Theorem~\ref{thm:intro-matrix-universality}. A more detailed proof is given in Section~\ref{sect:weak-convergence}.

As discussed at the beginning of this section, we will prove this by computing the limiting moments of $\operatorname{coker}(M_n)$, which involves estimating $\PP[f(M_n) = 0]$ for maps $\Z^n \to G$.
\begin{remark}
The same proof will work as written when the entries of $M_n$ come from any ring $R$ with at most one quotient to $\Z/a\Z$ for any positive integer $a$. Some examples of interest are the $p$-adic integers $\Z_p$ or a product $\prod_i \Z_{p_i}$ for some collection of distinct primes $p_i$. We will find that when $R$ has exactly one quotient to $\Z/a\Z$, then for any finite abelian group $G$ of exponent dividing $a$, the limiting $G$-moment of $\operatorname{coker}(M_n)$ is $|G|^{-u}$. Then we get the conclusion of Theorem~\ref{thm:matrix-universality} for those $a$ for which $R$ has a quotient to $\Z/a\Z$.
\end{remark}

\subsection{Bounds for most maps}

It turns out that $(w, h, \varepsilon)$-balanced is a strong enough condition that we can get bounds on $\PP[f(M) = 0]$ for the vast majority of maps $f$.

\begin{definition}
    If $V$ is an abelian group with generating set $S$ and $T \subseteq S$, we write $V_{\setminus T}$ for the subgroup $\langle S \setminus T \rangle$ of $V$. When $V = (\Z/a\Z)^n$ or $\Z^n$ we implicitly take $S$ to be the ``standard basis''.
    
    Let $\mathcal{P} = \{P_1, \dots, P_\ell\}$ be a partition of $S$ and $G$ be a finite abelian group. A function $f\colon V \to G$ is a \textit{$\mathcal{P}$-code} of distance $w$ if for any $\sigma \subset \mathcal{P}$ with $|\cup \sigma| < w$, we have $f(V_{\setminus\cup\sigma}) = G$.
\end{definition}

To approximate $\PP[f(M) = 0]$ for codes $f$, we will split the matrices $M$ into independent sets of columns. Each such set of $r$ random columns gets mapped to something close to uniform in $G^r$. The following lemma is analogous to \cite[Lemma 2.1]{wood2019matrices}.

\begin{lemma}\label{lem:partition-equidistribution}
    Let $n, r \geq 1$ be integers. Let $G$ be a finite abelian group and let $a$ be a multiple of the exponent of $G$. Let $N$ be the number of subgroups of $G$. Let $\varepsilon > 0$ be a real number. Let $V = (\Z/a\Z)^n$. Let $\mathcal{P} = \{P_i\}$ be a partition of $[n]$ and let $\ell = |\mathcal{P}|$. Let $f \in \operatorname{Hom}(V, G)$ be a $\mathcal{P}$-code of distance $w < n$. 
    
    Let $M$ be an $n\times r$ random matrix in $V^r$ such that the matrices $M_{P_i, [r]}$ are independent and $\varepsilon$-balanced as random elements of $((\Z/a\Z)^{\#P_i})^r$.
    
    Let $g_1, \dots, g_r \in G$. Then \[
    |\PP[f(M) = (g_1, \dots, g_r)] - |G|^{-r}| \leq N\exp\left(-\frac{\varepsilon w}{\ell Na^2}\right)
    \]
\end{lemma}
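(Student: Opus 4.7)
The plan is to view $f(M)$ as a time-inhomogeneous random walk on $G^r$ with one step per part of $\mathcal{P}$, and bound its distance to the uniform distribution by invoking Corollary~\ref{cor:random-walks} with a carefully chosen generating chain of subgroups.

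Setting $p = \#\mathcal{P}$, I would let $\phi_i\colon ((\Z/a\Z)^{\#P_i})^r \to G^r$ denote the homomorphism that applies $f$ column-wise (after padding each column with zeros outside $P_i$), and let $\mu_i$ be the pushforward of the block $M_{P_i,[r]}$ under $\phi_i$. Decomposing $M$ as the sum of its padded blocks gives $f(M) = \sum_i \phi_i(M_{P_i,[r]})$, so by independence of the blocks the law of $f(M)$ equals $\nu := \mu_1 * \cdots * \mu_p$. Setting $J_i := f(V_{P_i}) \leq G$, the image of $\phi_i$ is $K_i := J_i^r$, and Lemma~\ref{lem:balanced-facts}(1) shows that $\mu_i$ is $\varepsilon$-balanced in $K_i$; in particular $\langle \operatorname{supp}\mu_i \rangle = K_i$. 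Lemma~\ref{lem:balanced-singular-values} then yields $\sigma_i \leq \exp(-\varepsilon/(2|G|^{3r}))$ for the second-largest singular value of $*\mu_i$ on $L^2(K_i)$.

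Next I would build a chain of subgroups $\{0\} = L_0 \subsetneq L_1 \subsetneq \cdots \subsetneq L_k = G$ greedily: at step $j$, writing $T_J := \{i : J_i = J\}$, pick $H_j \in \{J_i : J_i \not\subseteq L_{j-1}\}$ maximizing $|T_{H_j}|$, and set $L_j := L_{j-1} + H_j$. Strict containment forces $k \leq N$. Applying the $\mathcal{P}$-code property to $\sigma := \{P_i : J_i \not\subseteq L_{j-1}\}$ (for which $\sum_{i\notin\sigma} J_i \subseteq L_{j-1} \neq G$) gives $\sum_{i : J_i \not\subseteq L_{j-1}} |P_i| \geq w$, so at least $w/\ell$ such indices exist. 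These realize at most $N$ distinct values of $J_i$, so by pigeonhole $|T_{H_j}| \geq w/(\ell N)$.

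Finally, Corollary~\ref{cor:random-walks} applied to $G^r$ with the subgroups $H_1^r, \dots, H_k^r$ delivers the bound: normality is automatic since $G^r$ is abelian; $\sum_j H_j^r = (\sum_j H_j)^r = G^r$ because the chain reaches $G$; and $I_{H_j^r} = T_{H_j}$ is nonempty by construction. This gives
\[
\|\nu - \pi\|_{L^2(G^r)} \leq \sum_{j=1}^k \prod_{i \in T_{H_j}} \sigma_i \leq \sum_{j=1}^k \exp\!\left(-\frac{\varepsilon |T_{H_j}|}{2|G|^{3r}}\right) \leq N \exp\!\left(-\frac{\varepsilon w}{2\ell N |G|^{3r}}\right),
\]
where $\pi$ is uniform on $G^r$, and the pointwise estimate $|\PP[f(M)=(g_1,\dots,g_r)] - |G|^{-r}| \leq \|\nu-\pi\|_{L^\infty(G^r)} \leq \|\nu-\pi\|_{L^2(G^r)}$ finishes the proof. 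The main subtlety is ensuring the pigeonhole succeeds at every rung of the chain, which is precisely why the code condition must be applied with the moving threshold $H = L_{j-1}$; this is also why the factor $N$ appears both as a multiplicative prefactor (one term per rung) and inside the denominator of the exponent (the per-rung pigeonhole loss).
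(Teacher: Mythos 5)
Your proof is correct and takes essentially the same approach as the paper: express $f(M)$ as a time-inhomogeneous random walk on $G^r$ with one step per block, use the $\mathcal P$-code property plus a pigeonhole to find subgroups of $G$ that each appear at least $w/(\ell N)$ times and that jointly generate $G$, bound the per-step singular values via Lemma~\ref{lem:balanced-singular-values}, and feed everything into the random-walk theorem. The only cosmetic difference is that you build a strictly increasing greedy chain $L_0 \subsetneq \cdots \subsetneq L_k = G$ and invoke Corollary~\ref{cor:random-walks}, whereas the paper directly defines $S$ as the set of subgroups realized at least $w/(\ell N)$ times, notes that the remaining indices have total weight $< w$ so the frequent subgroups already generate $G$, and invokes Theorem~\ref{thm:intro-random-walks}; in the abelian setting these are the same argument, and both yield $k \le N$ summands each bounded by $\exp(-\varepsilon w/(2\ell N|G|^{3r}))$.
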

\begin{proof}
    Let $e_1, \dots, e_n$ be the standard generating set for $V$. For $i = 1, \dots, \#\mathcal{P}$, let $V_i = \langle e_j \mid j \in P_i\rangle \cong \Z^{\#P_i}$.

    The idea is to treat $f(M)$ as a random walk in $G^r$. We have \[
    f(M) = \sum_{i=1}^{\#\mathcal{P}}f(M_{P_i, [r]}),
    \]
    where $M_{P_i, [r]}$ is interpreted as an $\varepsilon$-balanced random element of $V_i^r \cong ((\Z/a\Z)^{\#P_i})^r$, a subgroup of $((\Z/a\Z)^n)^r$. 

    Let $S = \{H \leq G^r \mid H = f(V_i^r) \text{ for at least }w/\ell N\text{ values of }i\}$. Note that $f(V_i^r) = f(V_i)^r$, so as $i$ ranges over $1, \dots, \#\mathcal{P}$ there are at most $N$ possible values for $f(V_i^r)$, each an $r$th power of a subgroup of $G$. Let $I = \{i \mid f(V_i^r) \notin S\}$. Then $\#I \leq w/\ell$, and so $|\bigcup_{i \in I} P_i| \leq w$. Since $f$ is a $\mathcal{P}$-code of distance $w$, it remains surjective if we discard all of these indices, which means the images of the $V_i^r$s with $f(V_i^r) \in S$ generate $G^r$. In other words, we have $\langle\bigcup_{H \in S} H\rangle = G^r$. The subgroups in $S$ will be the ones we use in the random walk, applying Theorem~\ref{thm:intro-random-walks}.

    By the definition of $S$, for each $H$ in $S$ we have $\#I_H \geq w/\ell N$. By Lemma~\ref{lem:balanced-facts}, the steps $f(M_{P_i, [r]})$ are $\varepsilon$-balanced, which means that by Lemma~\ref{lem:balanced-singular-values-abelian} the second largest singular value $\sigma_i$ of the $i$th step $f(M_{P_i, [r]})$ is bounded above: $\sigma_i \leq \exp\left(-\frac{\varepsilon}{a^2}\right)$ (using the fact that each $f(M_{P_i, [r]})$ is supported on a subgroup of $G^r$, which still has exponent dividing $a$).
    
    Hence by Theorem~\ref{thm:intro-random-walks} we have \[
    |\PP[f(M) = (g_1, \dots, g_r)] - |G|^{-r}| \leq \sum_{H\in S}\exp\left(-\frac{\varepsilon w}{\ell Na^2}\right) \leq N\exp\left(-\frac{\varepsilon w}{\ell Na^2}\right),
    \]
    as desired.
\end{proof}

To combine these estimates we will use a result in the flavor of \cite[Lemma 2.3]{wood2019matrices}:

\begin{lemma}\label{lem:err-combining}
    Let $x_1, \dots, x_m \geq -1$ be real numbers such that $\sum_{i=1}^m \max\{0, x_i\} \leq \log 2$. Then \[
    \left|\prod_{i=1}^m (1 + x_i) - 1\right| \leq 2\sum_{i=1}^m |x_i|
    \]
    and \[
    \sum_{i=1}^m \min\{0, x_i\} \leq  \prod_{i=1}^m (1 + x_i) - 1 \leq 2\sum_{i=1}^m \max\{0, x_i\}.
    \]
\end{lemma}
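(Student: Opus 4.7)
The plan is to split the indices into $P=\{i:x_i\ge 0\}$ and $N=\{i:x_i<0\}$, and to bound the product over $P$ and over $N$ separately. Set $S^+=\sum_i\max\{0,x_i\}=\sum_{i\in P}x_i$ and $S^-=\sum_i\min\{0,x_i\}=\sum_{i\in N}x_i$, so $S^+\ge 0\ge S^-$ and $\sum_i|x_i|=S^+-S^-$. Note also that every factor $1+x_i$ is nonnegative by hypothesis, so the two partial products $A=\prod_{i\in P}(1+x_i)$ and $B=\prod_{i\in N}(1+x_i)$ satisfy $A\ge 1$ and $0\le B\le 1$.

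For the upper bound, I would use the two elementary inequalities $1+t\le e^t$ (valid for all real $t$) and $e^t\le 1+2t$ on $[0,\log 2]$ (the two sides agree at $0$ and are convex/linear with the linear one dominating at $\log 2$). These give
\[
\prod_i(1+x_i)=A\cdot B\le A\le e^{S^+}\le 1+2S^+,
\]
since $B\le 1$ and $S^+\le\log 2$ by assumption. That yields the right-hand inequality $\prod(1+x_i)-1\le 2S^+$.

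For the lower bound, the key claim is that $B\ge 1+S^-$. This I would prove by a short induction on $|N|$: if $(1+x_{i_1})\cdots(1+x_{i_k})\ge 1+x_{i_1}+\cdots+x_{i_k}$ holds, then multiplying by the next factor $1+x_{i_{k+1}}\in[0,1]$ and using the fact that if the previous inductive bound is negative the product is still nonnegative (while the target $1+S^-$ may be negative, but then the inequality is automatic), and if it is nonnegative one picks up a nonnegative cross term $x_{i_{k+1}}\cdot(x_{i_1}+\cdots+x_{i_k})\ge 0$ (product of two nonpositive numbers). Combined with $A\ge 1$ and $B\ge 0$, this gives
\[
\prod_i(1+x_i)=AB\ge B\ge 1+S^-,
\]
which is the left-hand inequality. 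The absolute value bound is then immediate: $\prod(1+x_i)-1$ lies in the interval $[S^-,2S^+]$, so its absolute value is at most $\max\{-S^-,2S^+\}\le 2S^+-2S^-=2\sum_i|x_i|$.

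The only subtle point — and the one place I would be most careful — is the induction giving $B\ge 1+S^-$ for the negative factors; the naive inductive step multiplies an inequality whose right-hand side can be negative by a factor in $[0,1]$, which in general reverses direction. The way around this is the case split noted above: either $1+(\text{partial sum})\le 0$, in which case the claim is free because $B\ge 0$, or it is nonnegative, in which case the cross term is a product of two nonpositive reals and hence nonnegative, so the induction closes.
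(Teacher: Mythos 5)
Your proposal is correct and follows essentially the same strategy as the paper: split into the nonnegative and nonpositive indices (equivalently, replace the opposite-sign entries by zero), bound the positive part via $1+t\le e^t$ together with $e^t\le 1+2t$ on $[0,\log 2]$, and bound the negative part via the Weierstrass/Bernoulli product inequality. The only difference is cosmetic — you prove the Bernoulli step $\prod_{i\in N}(1+x_i)\ge 1+\sum_{i\in N}x_i$ by an explicit induction with the case split on the sign of the partial sum, whereas the paper simply asserts it; your handling of that case split is correct.
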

\begin{proof}
    The first statement follows from the second statement because $\max\{0, x_i\} \leq |x_i|$ and $\min\{0, x_i\} \geq -|x_i|$. So, we will show the second statement.

    First, assume $x_i \leq 0$ for all $i$. In that case, \[
    \prod_{i=1}^m (1 + x_i) \geq 1 + \sum_{i=1}^m x_i.
    \]
    Next, assume $x_i \geq 0$ for all $i$. Using the fact that $1 + x_i \leq e^{x_i}$, we get \[
    \prod_{i=1}^m (1 + x_i) \leq e^{\sum_{i=1}^m x_i}.
    \]
    We have $e^x - 1 = 2x$ at $x = 0$ and $\frac{d}{dx}(e^x - 1) \leq \frac{d}{dx}(2x)$ for $x \leq \log 2$, so $e^x - 1 \leq 2x$ for $0 \leq x \leq \log 2$. Hence, if $\sum_{i=1}^m x_i \leq \log 2$, then $\exp\left(\sum_{i=1}^m x_i\right) - 1 \leq 2\sum_{i=1}^m x_i$.

    Now consider the general case. By replacing each negative $x_i$ with zero, we can only increase the product $\prod_{i=1}^m (1 + x_i)$. On the other hand, by replacing each positive $x_i$ with zero, we can only decrease it. Hence, for general $x_i$, we get \[
    \sum_{i=1}^m \min\{0, x_i\} \leq \prod_{i=1}^m (1 + \min\{0, x_i\}) - 1 \leq \prod_{i=1}^m (1 + x_i) - 1 \leq \prod_{i=1}^m (1 + \max\{0, x_i\}) - 1 \leq 2\sum_{i=1}^m \max\{0, x_i\}.
    \]
\end{proof}

Applying this lemma with $x_i$ being the error in Lemma~\ref{lem:partition-equidistribution} multiplied by $|G|^r$ yields an estimate on the probability that the whole matrix maps to zero:

\begin{lemma}\label{lem:full-matrix-partition-equidistribution}
    Let $u \geq 0$ be an integer. Let $G$ be a finite abelian group and let $a$ be a multiple of the exponent of $G$. Let $(w_n)_n, (h_n)_n, (\delta_n)_n, (\varepsilon_n)_n$ be sequences of real numbers such that $h_n\log n = o\left(n\varepsilon_n\delta_n\right)$ and $w_nh_n = o\left(n\varepsilon_n\delta_n\right)$.
    
    For a natural number $n$, let $V = (\Z/a\Z)^n$. Let $M$ be an $(w_n, h_n, \varepsilon_n)$-balanced $n\times (n + u)$ random matrix with entries in $\Z/a\Z$. Let $\mathcal{P}$ be the row partition associated to $M$ and let $f \in \operatorname{Hom}(V, G)$ be a $\mathcal{P}$-code of distance $n\delta_n$. 

    Then for all $g_1, \dots, g_{n+u} \in G$, we have \[
    |\PP[f(M) = (g_1, \dots, g_{n+u})] - |G|^{-n-u}| = o\left(\frac{1}{|G|^{n+u}}\right).
    \]
\end{lemma}
\begin{proof}
    Let $\mathcal{P}$ and $\mathcal{Q}$ be the row and column partitions for $M$ as in the definition of $(w_n, h_n, \varepsilon_n)$-balanced. Let $M_i = M_{[n], Q_i}$ for each $i$. Let $g_{Q_i} = (g_j \mid j \in Q_i)$. By independence, \[
    \PP[f(M) = (g_1, \dots, g_{n+u})] = \prod_i \PP[f(M_i) = g_{Q_i}].
    \]
    For each $i$, let $x_i = |G|^{\#Q_i}\PP[f(M_i) = g_{Q_i}] - 1$. By Lemma~\ref{lem:partition-equidistribution}, we have \begin{align*}
    |x_i| &\leq N|G|^{\#Q_i}\exp\left(-\frac{n\varepsilon_n \delta_n}{Nh_na^2}\right) \\
    &\leq N|G|^{w_n}\exp\left(-\frac{n\varepsilon_n \delta_n}{Nh_na^2}\right).
    \end{align*}
    Hence we have \[
    \log|x_i| \leq \log N + w_n\log |G| - \frac{n\varepsilon_n \delta_n}{N h_na^2}.
    \]
    Since $h_n\log n = o(n\varepsilon_n\delta_n)$, we have $\lim_{n\to\infty} \frac{n\varepsilon_n\delta_n}{2Nh_na^2} - \log n = \infty$. In particular, $\log|x_i| \to -\infty$ as $n\to\infty$.
    
    By Lemma~\ref{lem:err-combining}, we therefore have that for such $n$, \begin{align*}
    ||G|^{n+u}\PP[f(M) = (g_1, \dots, g_{n+u})] - 1| &= \left|\prod_{i=1}^m|G|^{\#Q_i}\PP[f(M_i) = g_{Q_i}]| - 1\right| \\
    &= \left|\prod_{i=1}^m (1 + x_i) - 1\right| \\
    &\leq 2\sum_{i=1}^m |x_i| \\
    &\leq 2Nn\exp\left(-\frac{n\varepsilon_n\delta_n}{2Nh_na^2}\right).
    \end{align*}
    Since $\lim_{n\to\infty} \frac{n\varepsilon_n\delta_n}{2Nh_na^2} - \log n = \infty$, the right hand side converges to $0$ as $n\to\infty$.
\end{proof}

\subsection{Bounds for the rest of the maps}

This gives results for the case when $f$ is a code, but we still need to account for non-codes. To do this, we will show that non-codes make up a negligible proportion of all maps $V \to G$ and thus contribute only a small error term to the sum $\E[\#\operatorname{Sur}(\operatorname{coker}(M), G)]$. However it turns out that splitting maps into codes and non-codes is not enough to get this bound. Instead, as in \cite{wood2019matrices}, \cite{nguyenRandomIntegralMatrices2022}, and similar work, we will categorize non-codes by how far they are from being codes.

If $D$ is an integer with prime factorization $\prod_i p_i^{e_i}$, we write $\ell(D) = \sum_i e_i$.

\begin{definition}\label{def:depth}
    If $V = (\Z/a\Z)^n$ and $\mathcal{P}$ is a partition of the ``standard basis'' of $V$, the \textit{$(\mathcal{P},\delta)$-depth} of $f \in \operatorname{Hom}(V, G)$ is the maximal positive $D$ such that there is a $\sigma \subset \mathcal{P}$ with $|\cup \sigma| < \ell(D)\delta n$ such that $D = [G : f(V_{\setminus\cup\sigma})]$, or 1 if there is no such $D$.
\end{definition}
We can count the number of $f$ that have given $(\mathcal{P}, \delta)$-depth:
\begin{lemma}\label{lem:count-depth}
    If $D > 1$, then the number of $f \in \operatorname{Hom}(V, G)$ of $(\mathcal{P}, \delta)$-depth $D$ is at most \[
    K\binom{n}{\lceil \ell(D)\delta n\rceil - 1}2^{\ell(D)\delta n}|G|^nD^{-n + \ell(D)\delta n},
    \]
    where $K$ is the number of subgroups of $G$ of index $D$.
\end{lemma}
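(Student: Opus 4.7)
The plan is to parameterize each homomorphism $f$ of $(\mathcal{P},\delta)$-depth $D$ by a triple consisting of (i) a subgroup $H \leq G$ of index $D$ that will serve as a witnessing image, (ii) a subset $T \subseteq [n]$ of ``exceptional'' basis indices, and (iii) the values of $f$ on the standard generators of $V$. The bound in the lemma will then follow by multiplying together the number of choices at each step. The overall strategy is a direct counting argument; there is no serious obstacle, only a mild overcount to be noted at the end.

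Unwinding the definition of depth, any $f$ of depth $D$ admits some $\sigma \subseteq [\#\mathcal{P}]$ with $|\cup\sigma| < \ell(D)\delta n$ such that $H := f(V_{\setminus\cup\sigma})$ has index exactly $D$ in $G$; by assumption there are $K$ choices for such an $H$. Setting $T = \cup\sigma$, we have $|T| \leq \lceil \ell(D)\delta n\rceil - 1$, and although $T$ must in fact be a union of blocks of $\mathcal{P}$, dropping that restriction can only inflate the count. I would then bound the number of subsets of $[n]$ of size at most $\lceil \ell(D)\delta n\rceil - 1$ by $\binom{n}{\lceil \ell(D)\delta n\rceil - 1} \cdot 2^{\ell(D)\delta n}$, using the standard argument that each such subset arises (not necessarily uniquely) by first fixing an overset of size $\lceil \ell(D)\delta n\rceil - 1$ and then choosing an arbitrary subset of it; this relies on the identity $\sum_{j=0}^{k} \binom{n}{j}\binom{n-j}{k-j} = \binom{n}{k} 2^k$ together with $\binom{n-j}{k-j}\geq 1$.

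Once $(H,T)$ has been fixed, I would count compatible $f$ by specifying values on the standard generators: for $i \notin T$, we must have $f(e_i) \in H$ in order to guarantee $f(V_{\setminus T}) \subseteq H$, giving $|H|^{n - |T|}$ choices; for $i \in T$ the value $f(e_i) \in G$ is unconstrained, giving $|G|^{|T|}$ choices. Since $|H| = |G|/D$ and $|T| < \ell(D)\delta n$, this contributes
\[
|H|^{n - |T|} |G|^{|T|} = \frac{|G|^n}{D^{n - |T|}} \leq |G|^n D^{-n + \ell(D)\delta n}.
\]
Multiplying the three bounds yields the stated inequality.

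The only caveat (not truly an obstacle) is that this parameterization overcounts: a single $f$ of depth $D$ may admit many valid witnesses $\sigma$, and hence many valid pairs $(H,T)$. Because the lemma asks only for an upper bound, this overcounting is harmless, and no refinement of the argument is needed.
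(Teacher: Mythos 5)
Your proof is correct and takes essentially the same route as the paper: fix the witnessing subgroup $H$ (at most $K$ choices), bound the number of possible exceptional index sets $\cup\sigma$ by $\binom{n}{\lceil\ell(D)\delta n\rceil-1}2^{\ell(D)\delta n}$ via choosing an overset and then a subset, and finally count $f$ by freely assigning generator values in $H$ off the exceptional set and in $G$ on it. The paper orders the choices slightly differently ($S$ then $\cup\sigma$ then $H$), but the decomposition and the resulting arithmetic are the same.
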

\begin{proof}
    For each $f$ of $(\mathcal{P},\delta)$-depth $D$, there is a $\sigma \subset \mathcal{P}$ as described in Definition~\ref{def:depth}. There must be some set $S \subset [n]$ with $\#S = \lceil \ell(D)\delta n\rceil - 1$ and $\cup\sigma \subseteq S$. There are $\binom{n}{\lceil \ell(D)\delta n\rceil - 1}$ choices of $S$, and for each choice of $S$, there are certainly at most $2^{\#S} = 2^{\lceil \ell(D)\delta n\rceil - 1} \leq 2^{\ell(D)\delta n}$ choices of $\cup\sigma$. Since $\mathcal{P}$ is a partition, $\cup\sigma$ uniquely determines $\sigma$, so there are at most $2^{\ell(D)\delta n}$ choices of $\sigma$ for each choice of $S$. 
    
    Now we count how many $f$ of $(\mathcal{P},\delta)$-depth $D$ have each choice of $\sigma$, so fix $\sigma$. There are $K$ subgroups of $G$ of index $D$, so there are $K$ options for $f(V_{\setminus\cup\sigma})$.

    Fix a subgroup $H$ of $G$ with index $D$. We now count the number of $f$ with $f(V_{\setminus\cup \sigma}) \subseteq H$. There are at most $|H|^{n - |\cup\sigma|}$ maps from $V_{\setminus \cup\sigma}$ to $H$, and for each such map, there are at most $|G|^{|\cup\sigma|}$ homomorphisms from $V$ to $G$ which restrict appropriately. Hence, there are at most \[
    |H|^{n - |\cup\sigma|}|G|^{|\cup\sigma|} = |G|^{n - |\cup\sigma|}D^{-n + |\cup\sigma|}|G|^{|\cup\sigma|} = |G|^nD^{-n + |\cup\sigma|} \leq |G|^nD^{-n + \ell(D)\delta n}
    \]
    maps $f$ with $f(V_{\setminus\cup\sigma}) = H$. Combined with the counts of choices of $\sigma$ and subgroups of $G$ of index $D$, we get the lemma.
\end{proof}

For non-codes, we do not get precise estimates on $\PP[f(M) = 0]$, but we can get upper bounds.
\begin{lemma}\label{lem:partition-depth}
    Let $r \geq 1$ be an integer. Let $G$ be a finite abelian group and let $a$ be a multiple of the exponent of $G$. Let $N$ be the number of subgroups of $G$. Let $\varepsilon > 0$ and $\delta > 0$ be real numbers. Let $V = (\Z/a\Z)^n$. Let $\mathcal{P} = \{P_1, \dots, P_m\}$ be a partition of $[n]$ and let $\ell = |\mathcal{P}|$. Let $f \in \operatorname{Hom}(V, G)$ have $(\mathcal{P}, \delta)$-depth $D > 1$ with $[G : f(V)] < D$. 
    
    Let $M$ be an $n\times r$ random matrix in $V^r$ such that the matrices $M_{P_i, [r]}$ are independent and $\varepsilon$-balanced as random elements of $((\Z/a\Z)^{\#P_i})^r$.
    
    Then \[
    \PP[f(M) = 0]  \leq (1 - \varepsilon)\left(D^r|G|^{-r} + N\exp\left(-\frac{\varepsilon \delta n}{2N\ell a^2}\right)\right).
    \]
\end{lemma}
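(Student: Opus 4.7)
The plan is to fix a witness $\sigma\subseteq [m]$ of the depth $D$, so that $|\cup\sigma| < \ell(D)\delta n$ and $H := f(V_{\setminus\cup\sigma})$ satisfies $[G:H] = D$, and then split the matrix along this $\sigma$. Writing $X := \sum_{i\notin\sigma} f(M_{P_i,[r]}) \in H^r$ and $Y := \sum_{i\in\sigma} f(M_{P_i,[r]}) \in f(V)^r$, the blocks being independent makes $X$ and $Y$ independent, and vanishing of $f(M) = X+Y$ forces $-Y = X \in H^r$. Hence
\[
\PP[f(M)=0] \leq \PP[Y\in H^r]\cdot \max_{x\in H^r}\PP[X=x],
\]
and I would bound the two factors by $1-\varepsilon$ and by $D^r|G|^{-r} + N\exp(-\varepsilon\delta n/(2N\ell|H|^{3r}))$ respectively, with $|H| = D^{-1}|G|$.

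For $\PP[Y \in H^r]\leq 1-\varepsilon$ I would use the hypothesis $[G:f(V)] < D$: since $f(V) = H + \sum_{i\in\sigma} f(V_{P_i})$ strictly contains $H$, some $i_0 \in \sigma$ must have $f(V_{P_{i_0}}) \not\subseteq H$. Conditioning on all $M_{P_i,[r]}$ with $i \in \sigma\setminus\{i_0\}$, the event $Y\in H^r$ becomes the event that $f(M_{P_{i_0},[r]})$ lies in a fixed coset of $H^r$. Intersecting with $f(V_{P_{i_0}})^r = \langle\operatorname{supp} f(M_{P_{i_0},[r]})\rangle$ yields either the empty set or a coset of $(H\cap f(V_{P_{i_0}}))^r$, a proper subgroup of $f(V_{P_{i_0}})^r$ precisely because $f(V_{P_{i_0}}) \not\subseteq H$. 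Pulling back through the surjection $V_{P_{i_0}}^r \twoheadrightarrow f(V_{P_{i_0}})^r$ presents this as a coset of a proper subgroup of $V_{P_{i_0}}^r$, and the $\varepsilon$-balancedness of $M_{P_{i_0},[r]}$ gives a conditional probability of at most $1-\varepsilon$, which survives averaging out.

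For $\max_{x\in H^r}\PP[X=x]$ I would apply Lemma~\ref{lem:partition-equidistribution} to the restricted data: ambient group $H$, partition $\mathcal{P}' := \{P_i : i \notin \sigma\}$ of $[n]\setminus\cup\sigma$, the map $f|_{V_{\setminus\cup\sigma}} \in \operatorname{Hom}(V_{\setminus\cup\sigma}, H)$, and the submatrix $M_{[n]\setminus\cup\sigma,[r]}$, whose blocks remain independent and $\varepsilon$-balanced. The lemma's pointwise output $|\PP[X=x] - |H|^{-r}| \leq N\exp(-\varepsilon\delta n/(2N\ell|H|^{3r}))$ then gives the required bound on the maximum. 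The hypothesis I need to check is that $f|_{V_{\setminus\cup\sigma}}$ is a $\mathcal{P}'$-code of distance $\delta n$: if some $\sigma'\subseteq [m]\setminus\sigma$ with $|\cup\sigma'| < \delta n$ had $f(V_{\setminus\cup(\sigma\cup\sigma')}) \subsetneq H$, then the index $D' := [G:f(V_{\setminus\cup(\sigma\cup\sigma')})] > D$ would be attained with $|\cup(\sigma\cup\sigma')| < \ell(D)\delta n + \delta n$, contradicting the maximality of $D$ via the growth of $\ell(\cdot)$ in the depth definition.

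The main obstacle in this plan is this last code-distance verification; it is the only place where the calibration of the function $\ell(\cdot)$ is actually used, and one needs that moving from index $D$ to any strictly larger achievable index requires at least $\delta n$ additional rows to be cut out. Once this compatibility is in place, the rest of the proof is a clean assembly of the independence-based decomposition, a single-block application of $\varepsilon$-balancedness at $P_{i_0}$, and the equidistribution estimate of Lemma~\ref{lem:partition-equidistribution} applied to the restricted map.
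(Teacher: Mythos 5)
Your proposal is correct and follows essentially the same route as the paper's proof: fix the witness $\sigma$ of depth $D$, split $f(M)$ into the block $X \in H^r$ supported off $\sigma$ and the complementary block $Y$, use independence to write $\PP[f(M)=0]\leq\PP[Y\in H^r]\cdot\max_{x\in H^r}\PP[X=x]$ (the paper phrases this as conditioning on $f(M)\in H^r$, which is equivalent), bound the first factor by $1-\varepsilon$ via a single block $P_{i_0}\subseteq\cup\sigma$ with $f(V_{P_{i_0}})\not\subseteq H$, and bound the second by applying Lemma~\ref{lem:partition-equidistribution} to the restricted map into $H$ together with $|H|^{-r}=D^r|G|^{-r}$ and the monotonicity $N'\leq N$, $\ell'\leq\ell$. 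The code-distance verification you flag as the crux is identical to the paper's argument, which likewise relies on the maximality of $D$ in Definition~\ref{def:depth}: adding $\tau$ with $|\cup\tau|<\delta n$ to $\sigma$ gives $|\cup(\sigma\cup\tau)|<(\ell(D)+1)\delta n$, and one needs $\ell(D')\geq\ell(D)+1$ for $D'>D$ to derive the contradiction, exactly as you observed.
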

\begin{proof}
    Since $f$ has $(\mathcal{P},\delta)$-depth $D$, there is a $\sigma \subset \mathcal{P}$ with $|\cup\sigma| < \ell(D)\delta n$ such that $D = [G : f(V_{\setminus\cup\sigma})]$. Let $f(V_{\setminus\cup\sigma}) =\colon H$. Since $[G : f(V)] < D$, we cannot have that $\sigma$ is empty. 
    
    Write $f(M) = \sum_{j\notin\sigma}f(M_{P_j, [r]}) + \sum_{j\in\sigma}f(M_{P_j, [r]})$. So, \[
    \PP[f(M) = 0] = \PP[f(M) \in H]\PP\left[\sum_{j\notin \sigma}f(M_{P_j, [r]}) = -\sum_{j\in\sigma}f(M_{P_j, [r]}) \ \middle|\  f(M) \in H\right].
    \]
    We bound the two probabilities on the right side separately. Note that since $\sum_{j\in\sigma}f(M_{P_j, [r]}) \in H$, we have $f(M) \in H$ exactly when $\sum_{j\notin\sigma}f(M_{P_j, [r]}) \in H$. Since $[G : f(V)] < [G : H]$, there must be some $i \in \sigma$ such that $f(M_{P_i, [r]})$ reduces to a nonzero element of $G/H$. Conditioning on all other $M_{P_k, [r]}$ for $k \neq i$, by the $\varepsilon$-balanced assumption we have that \[
    \PP\left[f(M) \in H\right] = \PP\left[f(M_{P_i, [r]}) \equiv -\sum_{j \in \sigma\setminus\{i\}}f(M_{P_j, [r]}) \pmod{H}\right] \leq 1 - \varepsilon.
    \]
    For the second probability, let $\mathcal{P}'$ be the partition of $[n]\setminus\cup\sigma$ induced by $\mathcal{P}$. Notice that $f|_{V_{\setminus\cup\sigma}}$ is a $\mathcal{P}'$-code of distance $\delta n$. Indeed, suppose there is some $\tau \subset \mathcal{P}'$ with $|\tau| < \delta n$ inducing some $\tau' \subset \mathcal{P}$ with $f(V_{\setminus\cup(\sigma\cup\tau)}) \neq H$. Then the image of $f|_{V_{\setminus\cup(\sigma\cup\tau)}}$ would have index strictly greater than $D$, contradicting maximality of $D$.
    
    Now we can apply Lemma~\ref{lem:partition-equidistribution} to the submatrix $M_{[n]\setminus\cup\sigma, [r]}$ and the code $f$ mapping it into $H^r$. If $N'$ is the number of subgroups of $H$ and $\ell' = |\mathcal{P}'|$, then conditioning on $M_{P_j, [r]}$ for $j \in \sigma$ gives \begin{align*}
    \PP\left[\sum_{j\notin \sigma}f(M_{P_j, [r]}) = -\sum_{j\in\sigma}f(M_{P_j, [r]}) \ \middle|\  f(M) \in H\right] &\leq |H|^{-r} + N'\exp\left(-\frac{\varepsilon \delta n}{2N'\ell'a^2}\right) \\
    &\leq D^r|G|^{-r} + N\exp\left(-\frac{\varepsilon \delta n}{2N\ell a^2}\right),
    \end{align*}
    and the lemma follows.
\end{proof}
Finally, we use Lemma~\ref{lem:err-combining} again to get a bound for the full $n\times (n+u)$ matrix:
\begin{lemma}\label{lem:full-partition-depth}
    Let $u \geq 0$ be an integer. Let $G$ be a finite abelian group and let $a$ be a multiple of the exponent of $G$. Let $(w_n)_n, (h_n)_n, (\delta_n)_n, (\varepsilon_n)_n$ be sequences of real numbers such that $h_n\log n = o\left(n\varepsilon_n\delta_n\right)$ and $w_nh_n = o\left(n\varepsilon_n\delta_n\right)$.
    
    For a natural number $n$, let $V = (\Z/a\Z)^n$. Let $M$ be an $(w_n, h_n, \varepsilon_n)$-balanced $n\times (n + u)$ random matrix with entries in $\Z/a\Z$. Let $\mathcal{P}$ be the row partition associated to $M$ and let $f \in \operatorname{Hom}(V, G)$ have $(\mathcal{P}, \delta_n)$-depth $D > 1$, with $[G : f(V)] < D$.

    Then there is a constant $K > 0$ depending only on $u$, $G$, $\alpha$, $\beta$, and the sequences $h_n$, $w_n$ such that for all $n$, \[
    \PP[f(M) = 0] \leq K\exp\left(-\varepsilon_n\frac{n}{w_n}\right)D^n|G|^{-n}.
    \]
\end{lemma}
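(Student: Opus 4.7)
The plan is to mirror the structure of the proof of Lemma~\ref{lem:full-matrix-partition-equidistribution}: split $M$ into independent column blocks $M_j := M_{[n], Q_j}$ using the column partition $\mathcal{Q} = \{Q_1, \dots, Q_m\}$, so that by independence
\[
\PP[f(M) = 0] = \prod_{j=1}^m \PP[f(M_j) = 0].
\]
For each $j$, apply Lemma~\ref{lem:partition-depth} with $r = \#Q_j \leq w_n$ and $\ell \leq h_n$, which yields
\[
\PP[f(M_j) = 0] \leq (1 - \varepsilon_n)\, D^{\#Q_j} |G|^{-\#Q_j} (1 + y_j),
\]
where $y_j = N D^{-\#Q_j}|G|^{\#Q_j}\exp\!\left(-\tfrac{\varepsilon_n \delta_n n}{2N h_n (D^{-1}|G|)^{3\#Q_j}}\right)$. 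Multiplying over $j$ and using $\sum_j \#Q_j = n + u$ gives
\[
\PP[f(M) = 0] \leq (1 - \varepsilon_n)^m\, D^{n+u} |G|^{-(n+u)}\, \prod_{j=1}^m (1 + y_j).
\]

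The next step is to argue that $\prod_j (1 + y_j)$ is bounded by a constant. Since $D \leq |G|$, we have $y_j \leq N |G|^{w_n}\exp\!\left(-\tfrac{\varepsilon_n \delta_n n}{2N h_n |G|^{3w_n}}\right)$. The assumptions $h_n = O(n^{1-\alpha})$ and $\varepsilon_n \delta_n \geq n^{-\alpha + \beta}$ give $\tfrac{\varepsilon_n \delta_n n}{h_n} \geq C n^{\beta}$ for some constant $C$, while $w_n = o(\log n)$ implies $|G|^{3w_n} \leq n^{\beta/2}$ for $n$ large. Thus $y_j \leq N|G|^{w_n}\exp(-C' n^{\beta/2})$ and consequently $\sum_{j=1}^m y_j \leq (n+u)N|G|^{w_n}\exp(-C'n^{\beta/2}) \to 0$. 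For $n$ large enough this sum is below $\log 2$, so Lemma~\ref{lem:err-combining} yields $\prod_j(1+y_j) \leq 1 + 2\sum_j y_j$, which is bounded uniformly in $n$.

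For the factor $(1 - \varepsilon_n)^m$, note that each column block has width at most $w_n$, so $m \geq (n+u)/w_n$. Hence
\[
(1 - \varepsilon_n)^m \leq \exp(-\varepsilon_n m) \leq \exp\!\left(-\frac{\varepsilon_n(n+u)}{w_n}\right),
\]
and since $w_n = o(\log n)$, for $n$ large we have $w_n \leq \log n$, giving $(1-\varepsilon_n)^m \leq \exp(-\varepsilon_n n / \log n)$. Finally, $D \leq |G|$ together with the integer $u$ (absorbing a harmless multiplicative constant $(|G|/D)^{|u|} \leq |G|^{|u|}$ into $K$) lets us replace $D^{n+u}|G|^{-(n+u)}$ by $D^n|G|^{-n}$, yielding the claimed bound. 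A uniform constant $K$ then handles the finitely many small $n$ for which the asymptotic estimates above might not yet hold.

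The main technical obstacle is the chain of asymptotic comparisons needed to control $y_j$: the exponent in $y_j$ must dominate the polynomial prefactor $N|G|^{w_n}$ after summing over $m \leq n + u$ blocks, which is precisely where the hypothesis $w_n = o(\log n)$ (rather than merely $w_n \leq C\log n$) is essential. Everything else is bookkeeping: factoring out the leading $D^{\#Q_j}|G|^{-\#Q_j}$ correctly, confirming that the independence hypothesis of Lemma~\ref{lem:partition-depth} survives restriction to a column block, and checking that $(1 - \varepsilon_n)^m$ provides the sub-exponential decay in $n/\log n$.
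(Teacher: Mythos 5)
Your proposal is correct and follows essentially the same route as the paper: decompose over the column blocks $Q_j$, apply Lemma~\ref{lem:partition-depth} to each, control the product of $(1+y_j)$ factors via Lemma~\ref{lem:err-combining} exactly as in Lemma~\ref{lem:full-matrix-partition-equidistribution}, and extract the $\exp(-\varepsilon_n n/\log n)$ decay from $(1-\varepsilon_n)^{\#\mathcal{Q}}$ using $\#\mathcal{Q}\geq n/\log n$ (a consequence of $|\mathcal{Q}|\leq w_n$ and $w_n = o(\log n)$). The only cosmetic difference is that you directly factor out $(1-\varepsilon_n)D^{\#Q_j}|G|^{-\#Q_j}$ and work with nonnegative $y_j$, whereas the paper defines a signed $x_i$ and invokes the one-sided estimate $\prod(1+x_i)-1\leq 2\sum\max\{0,x_i\}$; these are interchangeable.
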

\begin{proof}
    Let $\mathcal{Q}$ be the column partition for $M$ as in the definition of $(w_n, h_n, \varepsilon_n)$-balanced. Let $M_i = M_{[n], Q_i}$ for each $i$. By independence, \[
    \PP[f(M) = 0] = \prod_i \PP[f(M_i) = 0].
    \]
    For each $i$, let $x_i = \frac{|G|^{\#Q_i}D^{-\#Q_i}}{1 - \varepsilon_n}\PP[f(M_i) = 0] - 1$. By Lemma~\ref{lem:partition-depth}, we have \begin{align*}
    \max\{0, x_i\} &\leq N|G|^{\#Q_i}D^{-\#Q_i}\exp\left(-\frac{n\varepsilon_n \delta_n}{2Nh_n a^2}\right) \\
    &\leq N|G|^{w_n}D^{-w_n}\exp\left(-\frac{n\varepsilon_n \delta_n}{2Nh_n a^2}\right).
    \end{align*}
    By the same argument as in the proof of Lemma~\ref{lem:full-matrix-partition-equidistribution}, we find that \begin{align*}
    \frac{(D^{-1}|G|)^{n+u}}{(1 - \varepsilon_n)^{\#\mathcal{Q}}}\PP[f(M) = 0] - 1 = o(1)
    \end{align*}
    and \[
    \PP[f(M) = 0] = (1 + o(1))(1 - \varepsilon_n)^{\#\mathcal{Q}}(D^{-1}|G|)^{-n-u} \leq (1 + o(1))\exp(-\varepsilon_n\#\mathcal{Q})(D^{-1}|G|)^{-n-u}
    \]
    The conclusion follows from $\#\mathcal{Q} \geq \frac{n}{w_n}$.
\end{proof}

\subsection{Computing the moments}

Finally, we can combine all these results to compute the limiting moments for cokernels of $(w_n, h_n, \varepsilon_n)$-balanced random matrices. The most delicate part of this proof is the part where we handle the non-codes. This will involve a careful choice of the sequence $\delta_n$.
\begin{theorem}\label{thm:moments}
    Let $u \geq 0$ be an integer. Let $G$ be a finite abelian group and let $a$ be a multiple of the exponent of $G$ (including zero). Let $(w_n)_n, (h_n)_n$, $(\varepsilon_n)_n$ be sequences of real numbers such that $w_n = O(n^{\alpha_1})$, $h_n = O(n^{\alpha_2})$ and $\varepsilon_n = \Omega(n^{-\beta})$ for some real numbers $0 \leq \alpha_1, \alpha_2, \beta < 1$ satisfying \[2\alpha_1 + \alpha_2 < 1 - 2\beta.
    \]
    For each natural number $n$, let $M_n$ be an $(w_n, h_n, \varepsilon_n)$-balanced $n \times (n + u)$ random matrix with entries in $\Z/a\Z$. Then \[
    \lim_{n\to\infty}\E[\#\operatorname{Sur}(\operatorname{coker}(M_n), G)] = |G|^{-u}.
    \]
\end{theorem}
When $a = 0$, we mean that $M_n$ is a matrix over $\Z$. The conditions on $w_n, h_n, \varepsilon_n$ can be weakened somewhat with more careful accounting.
\begin{proof}
    Let $V = (\Z/a\Z)^n$. Following the discussion at the beginning of this section, we have \[
    \E[\#\operatorname{Sur}(\operatorname{coker}(M_n), G)] = \sum_{f \in \operatorname{Sur}(V, G)}\PP[f(M_n) = 0].
    \]\details{
    A surjection $\operatorname{coker}(M) \to G$ corresponds to a surjection $\Z^n \to G$ vanishing on the column space of $M$. Since $G$ has exponent $a$, any such surjection factors through $\Z^n/a\Z^n = (\Z/a\Z)^n$.
    }\counterwithout{equation}{section}Thus, \begin{align*}
        \left|\E[\#\operatorname{Sur}(\operatorname{coker}(M_n), G)] - \frac{1}{|G|^u}\right|
        &=\left|\sum_{f \in \operatorname{Sur}(V, G)}\PP[f(M_n) = 0] - \frac{1}{|G|^u}\right| \\
        &= \left|\sum_{f \in \operatorname{Sur}(V, G)}\PP[f(M_n) = 0] - \sum_{f \in \operatorname{Hom}(V, G)}\frac{1}{|G|^{n + u}}\right| \\
        &\leq \sum_{f \in \operatorname{Sur}(V, G)}\left|\PP[f(M_n) = 0] - \frac{1}{|G|^{n + u}}\right|.
    \end{align*}
    We will break this sum into a term coming from codes and a few terms coming from non-codes. Then we will bound each of the terms individually using Lemmas~\ref{lem:full-matrix-partition-equidistribution} and \ref{lem:full-partition-depth}.

    Since $2\alpha_1 + \alpha_2 < 1 - 2\beta$, we have $\alpha_1 + \beta < 1 - \beta - \alpha_1 - \alpha_2$. Choose $\gamma > 0$ such that \[
    \alpha_1 + \beta < \gamma < 1 - \beta - \alpha_1 - \alpha_2
    \]
    and let $\delta_n = n^{-\gamma}$. We have $w_nh_n = O(n^{\alpha_1 + \alpha_2}) = o(n^{1 - \beta - \gamma}) = o(n\varepsilon_n\delta_n)$ and $h_n\log n = O(n^{\alpha_1 + \alpha_2}) = o(n\varepsilon_n\delta_n)$ as well, so $\delta_n$ satisfies the conditions for Lemmas~\ref{lem:full-matrix-partition-equidistribution}~and~\ref{lem:full-partition-depth}.

    Let $\mathcal{P}, \mathcal{Q}$ be the row and column partitions witnessing the $(w_n, h_n, \varepsilon_n)$-balancedness of $M_n$. We have 
    \begin{align}
              \nonumber&\sum_{f \in \operatorname{Sur}(V, G)}\left|\PP[f(M_n) = 0] - \frac{1}{|G|^{n + u}}\right|   \\
              &\qquad\qquad\leq \sum_{\substack{f \in \operatorname{Sur}(V, G) \\ f \text{ code of distance }n\delta_n}}\left|\PP[f(M_n) = 0] - \frac{1}{|G|^{n + u}}\right| \\
        &\qquad\qquad\qquad +  \sum_{\substack{D > 1 \\ D \mid |G|}}\sum_{\substack{f \in \operatorname{Sur}(V, G) \\ f \text{ of }(\mathcal{P}, \delta_n)\text{-depth }D}}\PP[f(M_n) = 0] \\
        &\qquad\qquad\qquad+ \sum_{\substack{D > 1 \\ D \mid |G|}}\sum_{\substack{f \in \operatorname{Sur}(V, G) \\ f \text{ of }(\mathcal{P}, \delta_n)\text{-depth }D}} \frac{1}{|G|^{n+u}} \\
        &\qquad\qquad\qquad+ \sum_{f \in \operatorname{Hom}(V, G) \setminus \operatorname{Sur}(V, G)} \frac{1}{|G|^{n+u}}
    \end{align}
    
    For notational convenience, we will use $K$ to denote a constant that is allowed to change in each line as long as it remains a constant depending only on $a, u, \alpha_1, \alpha_2, \beta, (h_n)_n, (w_n)_n, G$.

    Wood showed in the proof of \cite[Theorem 2.9]{wood2019matrices} that (4) is bounded above by $Ke^{-n\log 2} = o(1)$. By Lemma~\ref{lem:full-matrix-partition-equidistribution}, we can bound (1): \begin{align*}
     \sum_{\substack{f \in \operatorname{Sur}(V, G) \\ f \text{ code of distance }n\delta_n}}\left|\PP[f(M_n) = 0] - \frac{1}{|G|^{n + u}}\right| \leq |G|^no\left(\frac{1}{|G|^{n+u}}\right) = o(1).
    \end{align*}
    To bound (2) and (3) we use Lemma~\ref{lem:count-depth}: for each $D > 1$, there are at most \[
    K\binom{n}{\lceil \ell(D)n\delta_n\rceil - 1}2^{\ell(D)n\delta_n}|G|^nD^{-n + \ell(D)n\delta_n}
    \]
    maps of $(\mathcal{P}, \delta_n)$-depth $D$. We will start by using this bound from Lemma~\ref{lem:count-depth} to get a slightly weaker bound whose limit behavior is easier to understand. A standard inequality says that $\binom{n}{k} \leq \left(\frac{ne}{k}\right)^k$, so for $\lceil \ell(D)n\delta_n\rceil \geq 2$ (which is the case for $n$ large enough, independent of $D$) \begin{align*}
        \binom{n}{\lceil \ell(D)n\delta_n\rceil - 1} &\leq \left(\frac{ne}{\lceil \ell(D)n\delta_n\rceil - 1}\right)^{\lceil \ell(D)n\delta_n\rceil - 1} \\
        &\leq \left(\frac{2ne}{\ell(D)n\delta_n}\right)^{\ell(D)n\delta_n} \\
        &= \left(\frac{2e}{\ell(D)\delta_n}\right)^{\ell(D)n\delta_n} \\
        &= \exp\left(\ell(D)n\delta_n\left(1 + \log 2 - \log \ell(D) - \log\delta_n\right)\right).
    \end{align*}
    Hence, the number of maps of $(\mathcal{P}, \delta_n)$-depth $D$ is at most \begin{align}\label{eq:bound-depth}
    \nonumber K|G|^nD^{-n}\exp\left(\ell(D)n\delta_n\left(\log \frac{4eD}{\ell(D)} - \log\delta_n\right)\right) &= K|G|^n\exp\left(\ell(D)n\delta_n\left(\log \frac{4eD}{\ell(D)} - \log\delta_n\right) - n\log D\right) \\
    &\leq K|G|^n\exp\left(\ell(|G|)n\delta_n\left(\log \frac{4e|G|}{\ell(|G|)} - \log\delta_n\right) - n\log 2\right) \tag{$*$}
    \end{align}
    Since $\lim_{\delta\to0} \delta\log\delta = 0$ and $\delta_n \to 0$ as $n\to\infty$, for large enough $n$ (depending on $|G|$ and the rate of decay of the sequence $\delta_n$) we have $\ell(|G|)\delta_n\left(\log \frac{4e|G|}{\ell(|G|)} - \log\delta_n\right) \leq \frac{1}{2}\log 2$, which means that for large enough $n$, \begin{align*}
        \sum_{D \mid |G|}\sum_{\substack{f \in \operatorname{Sur}(V, G) \\ f \text{ of }(\mathcal{P}, \delta_n)\text{-depth }D}} \frac{1}{|G|^{n+u}} &\leq \sum_{D \mid |G|} K|G|^{-u}\exp\left(\ell(|G|)n\delta_n\left(\log \frac{4e|G|}{\ell(|G|)} - \log\delta_n\right) - n\log 2\right) \\
        &\leq \sum_{D \mid |G|} K\exp\left(-\frac{\log 2}{2}n\right) \\
        &\leq K\exp\left(-\frac{\log 2}{2}n\right) = o(1),
    \end{align*}
    bounding (3) as desired.

    Finally, we need to bound (2). From Lemma~\ref{lem:full-partition-depth}, we have that if $f$ has $(\mathcal{P}, \delta_n)$-depth $D$, \[
    \PP[f(M_n) = 0] \leq K\exp\left(-\varepsilon_n\frac{n}{w_n}\right)D^n|G|^{-n}, 
    \]
    which, combined with \eqref{eq:bound-depth}, gives \begin{align*}
    \sum_{\substack{f \in \operatorname{Sur}(V, G) \\ f \text{ of }(\mathcal{P}, \delta_n)\text{-depth }D}}\PP[f(M_n) = 0] &\leq K\exp\left(\ell(D)n\delta_n\left(\log \frac{4eD}{\ell(D)} - \log\delta_n\right) -\frac{n\varepsilon_n}{w_n}\right).
    \end{align*}
    The term $\ell(D)n\delta_n\left(\log \frac{4eD}{\ell(D)} - \log\delta_n\right)$ is bounded above by a constant multiple of $n^{1 - \gamma}\log n$ for large enough $n$, whereas $\frac{n\varepsilon_n}{w_n}$ is bounded below by a constant multiple of $n^{1 - \beta - \alpha_1}$. Since $\gamma > \alpha_1 + \beta$, we have $n^{1 - \gamma}\log n = o(n^{1 - \beta - \alpha_1})$, so that the term in the exponent goes to $-\infty$ as $n\to\infty$. Thus, \[
    \sum_{\substack{f \in \operatorname{Sur}(V, G) \\ f \text{ of }(\mathcal{P}, \delta_n)\text{-depth }D}}\PP[f(M_n) = 0] = o(1),
    \]
    giving us a bound on (2). Since each of (1), (2), (3), and (4) is $o(1)$, we obtain the claim of the theorem.
\end{proof}

As explained at the beginning of this section, the results of \cite[Theorem 3.1, Lemma 3.2]{wood2019matrices} imply that Theorem~\ref{thm:matrix-universality} follows from Theorem~\ref{thm:moments}.

\subsection{Weak convergence and proof of Theorem~\ref{thm:intro-matrix-universality}}\label{sect:weak-convergence}

We now give a proof of Theorem~\ref{thm:intro-matrix-universality} from Theorem~\ref{thm:matrix-universality}, expanding on Remark~\ref{rmk:weak-convergence}. This idea is due to Liu and Wood in \cite{liu2018freegroup}, although their topological space consists of profinite groups.

Let $\mathcal{A}$ be the set of isomorphism classes of well-behaved abelian groups. Recall that an abelian group $G$ is well-behaved if $G \otimes \Z/a\Z$ is finite for every positive integer $a$.

Recall that for a positive integer $a$ and finite abelian group $H$ with exponent dividing $a$ we defined \[
U_{a, H} = \{X \text{ well-behaved abelian}\mid X \otimes \Z/a\Z \cong H\} \subset \mathcal{A}.
\]
If $Y \sim \lambda_u$, then $\PP[Y \otimes \Z/a\Z \cong H] = \lambda_u(U_{a, H})$.

The sets $U_{a, H}$ cover $\mathcal{A}$. Moreover, suppose $U_{a, H} \cap U_{a', H'}$ is nonempty. Then it consists of groups $G$ with $G \otimes \Z/a\Z \cong H$ and $G \otimes \Z/a'\Z \cong H'$. Note that both of these quotients are determined by $G \otimes \Z/aa'\Z$ (in fact, considering $\operatorname{lcm}(a, a')$ is enough). So, $U_{a, H} \cap U_{a', H'}$ is covered by the disjoint sets $U_{aa', H''}$ as $H''$ ranges over all finite abelian groups of exponent dividing $aa'$ with $H'' \otimes \Z/a\Z \cong H$ and $H'' \otimes \Z/a'\Z \cong H'$. Thus, the sets $U_{a, H}$ form a basis for a topology on $\mathcal{A}$. From now on, we will view $\mathcal{A}$ as a second-countable topological space with this topology. Since $H \in U_{a, H}$, finite groups are dense in $\mathcal{A}$. We equip $\mathcal{A}$ with the Borel $\sigma$-algebra.

We can give a smaller basis for the same topology by considering only $a = k!$ for positive integers $k$. If $H$ is a finite abelian group with exponent dividing $k!$, let $U_{(k), H} \coloneqq U_{k!, H}$. We observe that if $a \mid a'$, then we can write $U_{a, H}$ as a finite disjoint union of $U_{a', H'_i}$, where $H_i$ ranges over all finite abelian groups of exponent dividing $a'$ such that $H'_i \otimes \Z/a\Z \cong H$. Thus, the sets $U_{(k), H}$ also form a basis for the topology on $\mathcal{A}$.

The sets $U_{(k), H}$ enjoy the property that if $k \leq k'$, then either $U_{(k'), H'} \subseteq U_{(k), H}$ or $U_{(k'), H'} \cap U_{(k), H} = \varnothing$. In particular, every open set in $\mathcal{A}$ is a countable disjoint union of the basic opens $U_{(k), H}$. To see this, suppose $U$ is an open set in $\mathcal{A}$. Then consider the collection of basic opens $U_{(k), H}$ in $U$ and the subcollection of these which are maximal with respect to inclusion. We observe that ascending chains of basic opens in $\mathcal{A}$ stabilize, so every basic open $U_{(k), H}$ in $U$ is contained in a maximal such basic open, and therefore $U$ is the union of its maximal basic opens. Two different such maximal basic opens are necessarily disjoint by the above discussion, so we are done.

We recall the construction of the probability measure $\lambda_u$ on $\mathcal{A}$. We define a random finite abelian $p$-group $Y_p$ as follows: if $B$ is a finite abelian $p$-group, then \[
\PP[Y_p \cong B] = \frac{1}{|B|^u|\operatorname{Aut}(B)|}\prod_{k= u + 1}^\infty (1 - p^{-k}).
\] 
Then set $Y \coloneqq \prod_p Y_p$, where the product ranges over all primes $p$. It follows from \cite[Lemma 3.2]{wood2019matrices} that this indeed determines a probability measure on the countable set $\mathcal{A}_p$ of isomorphism classes of finite abelian $p$-groups with the discrete $\sigma$-algebra. We have a map $\prod_p \mathcal{A}_p \to \mathcal{A}$ given by direct product of groups, where $p$ ranges over all primes. The preimage of $U_{a, H}$ under this map is a countable union of sets of the form $\prod_{p \mid a} \{\widetilde{H}_{p}\} \times \prod_{p \nmid a} \mathcal{A}_p$, where $\widetilde{H}_{p}$ is a finite abelian $p$-group with $\widetilde{H}_{p} \otimes \Z/a\Z$ isomorphic to the $p$-Sylow subgroup of $H$. In particular, the product map $\prod_p \mathcal{A}_p \to \mathcal{A}$ is measurable, and the product $Y = \prod_p Y_p$ indeed defines a random group in $\mathcal{A}$. We let $\lambda_u$ be the associated probability measure on $\mathcal{A}$.

\begin{proof}[Proof of Theorem~\ref{thm:intro-matrix-universality}]
In Theorem~\ref{thm:matrix-universality}, we showed that in the setting of Theorem~\ref{thm:intro-matrix-universality}, we have the convergence statement \[
\lim_{n\to\infty} \PP[\operatorname{coker}(M_n) \in U_{a, H}] = \lim_{n\to\infty} \PP[\operatorname{coker}(M_n) \otimes \Z/a\Z \cong H] = \PP[Y \otimes \Z/a\Z \cong H] = \lambda_u(U_{a, H})
\]
for any positive integer $a$ and finite abelian group $H$ of exponent dividing $a$. We now explain why this implies weak convergence in the topology we have just defined on $\mathcal{A}$. 

Let $U$ be an open set in $\mathcal{A}$. Write $U$ as a countable disjoint union of basic opens $U_i = U_{(k_i), H_i}$: \[
U = \bigsqcup_{i=1}^\infty U_i.
\]
Then by countable additivity and Fatou's lemma, we have \begin{align*}
\lambda_u(U) &= \sum_{i=1}^\infty \lambda_u(U_i) \\
&= \sum_{i=1}^\infty \lim_{n\to\infty} \PP[\operatorname{coker}(M_n) \in U_i] \\
&\leq \liminf_{n\to\infty}\sum_{i=1}^\infty \PP[\operatorname{coker}(M_n) \in U_i] \\
&= \liminf_{n\to\infty}\PP[\operatorname{coker}(M_n) \in U].
\end{align*}
By the Portmanteau theorem, this assertion for every open $U$ is equivalent to weak convergence of the distribution of $\operatorname{coker}(M_n)$ to $\lambda_u$.

On the other hand, weak convergence of the distribution of $\operatorname{coker}(M_n)$ to $\lambda_u$ would imply the convergence statement of Theorem~\ref{thm:matrix-universality} because each $U_{a, H}$ is both open and closed in $\mathcal{A}$. 
\end{proof}

\subsection*{Acknowledgements}
The author was supported by the NSF Graduate Research Fellowship Program, the Caltech Summer Undergraduate Research Fellowship program, and the Samuel P. and Frances Krown SURF Fellowship. The author thanks Melanie Wood and Omer Tamuz for mentorship and Alexander Gorokhovsky, Seth Berman, Sandra O'Neill, and Hoi Nguyen for insightful conversations. The author also thanks Gilyoung Cheong, Yifeng Huang, Hoi Nguyen, Roger Van Peski, Will Sawin, and Melanie Wood for helpful comments on an earlier draft of this manuscript. We especially thank the anonymous referees for comments that inspired a significant improvement to the condition in Theorem~\ref{thm:matrix-universality}.

\printbibliography
\end{document}